\pgfplotsset{compat=newest}
\newtheorem{theorem}{Theorem}[section]
\newtheorem{lemma}[theorem]{Lemma}
\newtheorem{corollary}[theorem]{Corollary}
\newtheorem{proposition}[theorem]{Proposition}
\theoremstyle{definition}
\theoremstyle{remark}
\newtheorem{remark}[theorem]{Remark}
\numberwithin{equation}{section}
\renewcommand{\div}{\operatorname{div}}
\newcommand{\bsigma}{\boldsymbol{\sigma}}
\newcommand{\btau}{\boldsymbol{\tau}}
\newcommand{\bchi}{\boldsymbol{\chi}}
\newcommand{\Hdivset}[1]{\boldsymbol{H}(\mathrm{div};#1)}
\newcommand{\ip}[2]{(#1\hspace*{.5mm},#2)}
\newcommand{\dual}[2]{\langle#1\hspace*{.5mm},#2\rangle}
\newcommand{\R}{\ensuremath{\mathbb{R}}}
\newcommand{\N}{\ensuremath{\mathbb{N}}}
\newcommand{\cT}{\ensuremath{\mathcal{T}}}
\newcommand{\diam}{\mathrm{diam}}
\newcommand{\RT}{\mathrm{RT}}
\newcommand{\normal}{{\boldsymbol{n}}}
\newcommand{\Grad}{{\boldsymbol{\nabla}}}
\newcommand{\dDiv}{\operatorname{div\mathbf{Div}}}
\newcommand{\bM}{\boldsymbol{M}}
\newcommand{\bN}{\boldsymbol{N}}
\newcommand{\bbLtwosym}{\mathbb{L}^2_{\mathrm{sym}}}
\newcommand{\logLogSlopeTriangle}[6]
{

    \pgfplotsextra
    {
        \pgfkeysgetvalue{/pgfplots/xmin}{\xmin}
        \pgfkeysgetvalue{/pgfplots/xmax}{\xmax}
        \pgfkeysgetvalue{/pgfplots/ymin}{\ymin}
        \pgfkeysgetvalue{/pgfplots/ymax}{\ymax}

        \pgfmathsetmacro{\xArel}{#1}
        \pgfmathsetmacro{\yArel}{#3}
        \pgfmathsetmacro{\xBrel}{#1-#2}
        \pgfmathsetmacro{\yBrel}{\yArel}
        \pgfmathsetmacro{\xCrel}{\xArel}

        \pgfmathsetmacro{\lnxB}{\xmin*(1-(#1-#2))+\xmax*(#1-#2)} 
        \pgfmathsetmacro{\lnxA}{\xmin*(1-#1)+\xmax*#1} 
        \pgfmathsetmacro{\lnyA}{\ymin*(1-#3)+\ymax*#3} 
        \pgfmathsetmacro{\lnyC}{\lnyA+#4*(\lnxA-\lnxB)}
        \pgfmathsetmacro{\yCrel}{\lnyC-\ymin)/(\ymax-\ymin)} 

        \coordinate (A) at (rel axis cs:\xArel,\yArel);
        \coordinate (B) at (rel axis cs:\xBrel,\yCrel);
        \coordinate (C) at (rel axis cs:\xCrel,\yCrel);

        \draw[#5]   (A)--
                    (B)-- 
                    (C)-- node[pos=0.5,anchor=west] {#6}
                    cycle;
    }
}
\newcommand{\logLogSlopeTriangleBelow}[6]
{

    \pgfplotsextra
    {
        \pgfkeysgetvalue{/pgfplots/xmin}{\xmin}
        \pgfkeysgetvalue{/pgfplots/xmax}{\xmax}
        \pgfkeysgetvalue{/pgfplots/ymin}{\ymin}
        \pgfkeysgetvalue{/pgfplots/ymax}{\ymax}

        \pgfmathsetmacro{\xArel}{#1}
        \pgfmathsetmacro{\yArel}{#3}
        \pgfmathsetmacro{\xBrel}{#1-#2}
        \pgfmathsetmacro{\yBrel}{\yArel}
        \pgfmathsetmacro{\xCrel}{\xArel}

        \pgfmathsetmacro{\lnxB}{\xmin*(1-(#1-#2))+\xmax*(#1-#2)} 
        \pgfmathsetmacro{\lnxA}{\xmin*(1-#1)+\xmax*#1} 
        \pgfmathsetmacro{\lnyA}{\ymin*(1-#3)+\ymax*#3} 
        \pgfmathsetmacro{\lnyC}{\lnyA+#4*(\lnxA-\lnxB)}
        \pgfmathsetmacro{\yCrel}{\lnyC-\ymin)/(\ymax-\ymin)} 

        \coordinate (A) at (rel axis cs:\xArel,\yArel);
        \coordinate (B) at (rel axis cs:\xBrel,\yCrel);
        \coordinate (C) at (rel axis cs:\xBrel,\yArel);

        \draw[#5]   (A)--
                    (B)-- node[pos=0.5,anchor=east] {#6}
                    (C)-- 
                    cycle;
    }
}
\begin{document}

\title{Mixed finite element methods for elliptic obstacle problems}

\author{Thomas F\"uhrer}
\address{Facultad de Matem\'aticas, Pontificia Universidad Cat\'olica de Chile, Avenida Vicu\~{n}a Mackenna 4860, Santiago, Chile.}
\email{thfuhrer@uc.cl}
\thanks{TF is supported by ANID through FONDECYT project 1210391.}

\author{Francisco Fuica}
\address{Facultad de Matem\'aticas, Pontificia Universidad Cat\'olica de Chile, Avenida Vicu\~{n}a Mackenna 4860, Santiago, Chile.}
\email{francisco.fuica@uc.cl}
\thanks{FF is supported by ANID through FONDECYT postdoctoral project 3230126.}

\subjclass[2010]{Primary 49J40,   	   
65N15,        
65N30         
}

\keywords{obstacle problem, membrane, plate bending, variational inequality, convergence, error estimates, a posteriori analysis.}

\date{}

\dedicatory{}

\begin{abstract}
  Mixed variational formulations for the first-order system of the elastic membrane obstacle problem and the second-order system of the Kirchhoff--Love  plate obstacle problem are proposed. 
  The force exerted by the rigid obstacle is included as a new unknown.
  A priori and a posteriori error estimates are derived for both obstacle problems.
  The a posteriori error estimates are based on conforming postprocessed solutions.
  Numerical experiments conclude this work.
\end{abstract}

\maketitle


\section{Introduction}\label{sec:intro}

  Variational inequalities have been a topic of interest for several decades due to their use in modelling important physical problems \cite{MR0880369,MR1786735}. 
  A particular instance is the \emph{obstacle problem}, in which one tries to determine the equilibrium position of an elastic membrane constrained to lie over an obstacle. 
  Another significant example is the bending of a plate over an obstacle.

  Numerical methods for approximating solutions of second-order displacement obstacle problems have been widely studied over the last decades. 
  For a variety of finite element-based solution techniques used to discretize these problems, we refer the interested reader to the following non-comprehensive list of references: \cite{MR0391502,MR0448949,MR0635927,MR0854380,MR0952855,MR1935809,MR2073936}. 
 Additionally, stabilized and least-squares methods have been proposed in \cite{MR3667082,MR3577944,MR4050087}. 
 Regarding mixed variational formulations for the membrane obstacle problem and more general variational inequalities, we mention the work \cite{MR0508584}, where the authors studied a mixed finite element approximation for variational inequalities and proved optimal error bounds.
  The mixed formulation proposed for the obstacle problem relied on using as auxiliary unknowns the gradient $\nabla u$ and the difference $u-g$, where $u$ and $g$ represent the deflection and the obstacle, respectively.  
 Dualization techniques based on Lagrange multipliers for variational inequalities were studied in \cite{MR0634283,MR1422506}. 
 A suitable extension of the results obtained in these works was provided in \cite{MR2073936}, where the authors proved the well-posedness of general variational inequalities by adapting the common Babu\v{s}ka--Brezzi conditions. 
   More recently, in \cite{MR3723328}, the authors studied a variational formulation using a Lagrange multiplier.
  Their formulation introduces an additional unknown which is the reaction force between the membrane and the obstacle, that can give useful information in the context of contact mechanics. 
  The authors designed mixed and stabilized finite element methods based on the deflection, i.e., the primal variable, to discretize the considered problem, and derived a priori and a posteriori error estimates. 

  Regarding mixed finite element approximation strategies for fourth-order obstacle-type problems, we mention  \cite{MR0596780}, where the authors extended the mixed finite element methods developed in \cite{MR0635927} for unilateral second-order problem of elliptic type. 
  Later, in \cite{MR0744475}, based on using the non-conforming Morley finite element, the authors derived convergence results without rates. 
  A suboptimal convergence rate in the energy norm was proved in \cite{MR0887070} by using the penalty method and piecewise quadratic elements.
  The main source of difficulty to derive a priori error estimates for finite element methods for the fourth-order obstacle problem as, e.g., a displacement obstacle problem for clamped Kirchhoff plates, is the lack of $H^{4}(\Omega)$-regularity for its solution $u$.
 More precisely, under suitable assumptions on data, it can be proved that $u\in H^{2+\alpha}(\Omega)$, for some $\alpha\in(0.5,1]$ determined by the interior angles of the domain $\Omega$.
 Based on the previous regularity, in \cite{MR2904578,MR3022265,MR3061064}, Brenner et al. analyzed and derived a priori error estimates utilizing conforming and non-conforming ($C^1$-continuous and $C^0$-continuous) finite element methods, and $C^0$ interior penalty methods.
 We also mention the very recent article \cite{2024arXiv240520338P}, where the authors proposed and analyzed a new mixed finite element method for approximating the solutions of fourth-order variational problems subject to a constraint as, e.g., a two-dimensional variational problem for linearly elastic shallow shells.
 Their method mainly relies on using a penalized mixed formulation and discretizing it by means of Courant triangles. 
 On the other hand, regarding a posteriori error estimates for a displacement obstacle problem for clamped Kirchhoff plates, we mention the recent works \cite{MR3595879,MR3921374}. 
 In \cite{MR3595879}, the authors proposed a residual-based a posteriori error estimator for $C^0$ interior penalty methods and proved reliability and efficiency estimates, whereas in \cite{MR3921374} the authors introduced a stabilized finite element formulation and derived a priori and a posteriori error estimates using conforming $C^1$-continuous finite elements. 
 We conclude this paragraph by mentioning the articles \cite{MR2753242,MR3128576,MR3434617}, where a posteriori error estimates for finite element methods for other fourth-order variational inequalities were investigated.
   
  In this work, we propose a novel mixed variational formulation for two instances of elliptic obstacle problems: the elastic membrane obstacle problem and the Kirchhoff--Love plate obstacle problem.
  More precisely, given two Hilbert spaces $V$ and $M$, a closed and convex set $K\subset V$ such that $0_{V}\in K$, bilinear forms $\mathfrak{a}: V\times V \to \mathbb{R}$ and $\mathfrak{b}: V\times M \to \mathbb{R}$, and linear forms $\mathfrak{G}: V'\to \mathbb{R}$ and $\mathfrak{F}: M' \to \mathbb{R}$, we study problems within the following framework: Find $(\boldsymbol\Sigma,\mathfrak{u})\in K\times M$ such that
\begin{align*}
\begin{array}{lcll}
\mathfrak{a}(\boldsymbol\Sigma, \boldsymbol\Psi - \boldsymbol\Sigma) + \mathfrak{b}(\boldsymbol\Psi - \boldsymbol\Sigma, \mathfrak{u}) & \geq & \mathfrak{G}(\boldsymbol\Psi - \boldsymbol\Sigma) \quad  &\forall \boldsymbol\Psi\in K, \\
\mathfrak{b}(\boldsymbol\Sigma, \mathfrak{v}) & = &  \mathfrak{F}(\mathfrak{v}) \quad &\forall \mathfrak{v} \in M.
\end{array}
\end{align*}
With this formulation at hand, in what follows, we comment the main differences between our method and the ones previously proposed in the literature. 
First, we note that this variational formulation, in contrast to the aforementioned references \cite{MR0508584,MR0635927}, where mixed formulations for variational inequalities were considered, does not incorporate the inequality nor the convex set $K$ in the second line of the previous system, but rather in the first.
Second, we only assume that $\mathfrak{a}$ is coercive on a strict subspace of $V$ related to the kernel of the bilinear form $\mathfrak{b}$ (see assumption \eqref{eq:assumption_coercivity_a}), as opposed to the mixed variational formulations proposed in \cite{MR0634283,MR0952855,MR1422506}, where the coercivity of $\mathfrak{a}$ on the whole space $V$ is assumed.
We also stress that to derive the formulation from~\cite{MR0508584} one requires sufficient regularity, which for the membrane obstacle problem is guaranteed for convex domains and sufficient smooth data. An extension of~\cite{MR0508584} to plate obstacle problems seems almost impossible due to the reduced regularity of fourth-order problems as discussed above.
Our formulation does not require additional regularity of solutions.
Third, we derive mixed formulations for the membrane and plate obstacle problems, based on the introduction of the additional unknown $\lambda$, which represents the reaction force between the membrane (resp. plate) and the obstacle.
  We mention that a similar approach was previously proposed in \cite{MR3723328}, where the authors introduced a mixed formulation for the membrane obstacle problem including the reaction force $\lambda$ as a further unknown; the authors only approximate the deflection and the reaction force.
  Instead, the method proposed in our work approximates simultaneously the deflection $u$, its gradient $\nabla u$, and the reaction force $\lambda$ and it extends to other obstacle problems such as the Kirchhoff--Love  plate obstacle problem.
 
  We prove well-posedness of the formulation above by assuming that the bilinear forms $\mathfrak{a}$ and $\mathfrak{b}$ satisfy the standard inf-sup assumptions \eqref{eq:assumption_non_negativity_a}--\eqref{eq:assumption_inf_sup_b} related to the classical Babu\v{s}ka--Brezzi theory, in a similar fashion as is performed in \cite{MR2073936}.
  After proving the well-posedness of the above problem, we present an appropriate discrete formulation and show its well-posedness.
  Then, after defining suitable Hilbert spaces, we derive mixed formulations for the membrane and plate obstacle problems, based on the introduction of the additional unknown $\lambda$.
  In particular, for the elliptic membrane obstacle problem, we prove that the solution associated to the mixed formulation coincides with the classical solution. 
  To approximate such a solution, we propose a discrete mixed formulation relying on the lowest-order Raviart--Thomas finite element. 
  The corresponding discrete solution presents, with respect to the previous approximations schemes in the literature, some advantages such as: it provides a direct approximation of the reaction force $\lambda$, a discrete counterpart of the complementary condition $\int_{\Omega}(u - g)\mathrm{d}\lambda = 0$, and a discrete version of $u\geq g$ in $\Omega$; see Proposition \ref{prop:properties}. 
  Moreover, we prove a priori error estimates for the approximation error and, under appropriate assumptions on data, we derive estimates in a weaker norm. 
  For the plate obstacle problem, we show that the solution obtained for the proposed mixed formulation coincides with the classical solution. 
  To approximate the solution of this problem, we propose a discrete scheme based on the discrete space $\mathbb{X}(\mathcal{T})$, recently introduced in \cite{2023arXiv230508693F}.
   This discretization technique presents similar advantages to the ones mentioned for the membrane obstacle problem. 
   In particular, it provides a direct approximation of the reaction force $\lambda$ and a discrete counterpart of $\int_{\Omega}(u - g)\mathrm{d}\lambda = 0$; see Proposition \ref{prop:plate:properties}.
   Due to the limited regularity of the solution of the plate obstacle problem, convergence rates for the approximation error are reduced (not explicitly shown here).
   Motivated by the latter observation, we develop and analyze reliable a posteriori error estimators for both, the membrane and plate obstacle problems. The estimators are defined using conforming postprocessed solutions.
      
  The presentation of the work is as follows.
   In section \ref{sec:not_and_prel}, we establish notation, recall some preliminaries, and present results concerning a particular class of variational inequalities.
   In section \ref{sec:obst_problem}, we recall the elliptic membrane obstacle problem and propose a novel mixed formulation. 
   A discrete formulation is studied and error estimates are derived as well. 
   Similarly, in section \ref{sec:plate}, we analyze a suitable mixed formulation for the plate obstacle problem and study its corresponding discrete scheme.
   A posteriori error estimates for both obstacle problems are proved in section \ref{sec:apost}.
   Finally, in section \ref{sec:num_ex}, we present a series of numerical experiments that illustrate the theory showing competitive performances of the devised a posteriori error estimators.


\section{Preliminaries}\label{sec:not_and_prel}
In section~\ref{sec:notation} we set some notation and in section~\ref{sec:fem_and_interp} we introduce the finite element spaces used in this work together with a discussion of some basic properties.
Section~\ref{sec:var_ineq_and_mix_FEM} deals with a framework for mixed obstacle problems.

\subsection{Notation and function spaces}\label{sec:notation}
Let $\Omega\subset \mathbb{R}^{n}$ ($n = 2, 3$) be a Lipschitz domain with boundary $\partial\Omega$. 
Throughout this work, we use common notations for Lebesgue and Sobolev spaces.
Define 
\begin{equation*}
\bbLtwosym(\Omega) := \{\bN\in [L^2(\Omega)]^{n\times n}\,:\, \bN=\bN^\top\}, \qquad \mathbb{H}(\dDiv;\Omega):=\{ \bN\in \bbLtwosym(\Omega) \,:\, \dDiv \bN \in L^{2}(\Omega)\}.
\end{equation*}
The inner product and norm of $L^2(\Omega)$, $[L^2(\Omega)]^n$, or $\bbLtwosym(\Omega)$ are denoted by $\ip{\cdot}{\cdot}_\Omega$ and $\|\cdot\|_{\Omega}$, respectively. 
The space $H_0^k(\Omega)$ with $k\in\N$ denotes the closure of $C_0^\infty(\Omega)$ under $\|v\|_k := \|D^k v\|_\Omega$. 
In the work at hand, we focus on the two cases $k=1$ and $k=2$, where we use the notation $\nabla v$ for $Dv$ and $\Grad\nabla v$ for $D^2 v$.
The dual space of $H_0^k(\Omega)$ is denoted by $H^{-k}(\Omega)$ and the duality on $H^{-k}(\Omega)\times H_0^k(\Omega)$ is written as
\begin{align*}
  \dual{\phi}{v} \quad\text{for } \phi\in H^{-k}(\Omega), \, v \in H_0^k(\Omega).
\end{align*}
Note that $\dual{\phi}{v} = \ip{\phi}v_\Omega$ for all $v\in H_0^k(\Omega)$ and $\phi\in L^2(\Omega)$. 
The dual space $H^{-k}(\Omega)$ is equipped with norm
\begin{align*}
  \|\phi\|_{-k}:= \sup_{0\neq v\in H_0^{k}(\Omega)} \frac{\dual{\phi}v}{\|v\|_k}, \quad \phi \in H^{-k}(\Omega).
\end{align*}
Friedrich's inequality reads
\begin{align*}
  \|v\|_\Omega \leq C_F^k \| v \|_{k} \qquad\forall v\in H_0^k(\Omega), ~ \text{ where } ~ 0<C_F\leq \diam(\Omega).
\end{align*}

For $v\in H_0^k(\Omega)$ (or similar spaces) we write $v\geq 0$ if $v\geq 0$ a.e. in $\Omega$. 
Given $\mu\in H^{-k}(\Omega)$, we write $\mu \geq 0$ if $\dual{\mu}{v} \geq 0$ for all $v\in H_0^{k}(\Omega)$ such that $v\geq 0$. 
Since every functional $\mu \in H^{-k}(\Omega)$ is a distribution, each $\mu$ satisfying $\mu \geq 0$ defines a nonnegative Radon measure in $\Omega$. In this case,
\begin{align*}
\int_{\Omega} v\,\mathrm{d}\mu = \dual{\mu}{v} \qquad \forall v\in H_0^{k}(\Omega).
\end{align*}

We recall that $\div\colon [L^2(\Omega)]^n \to H^{-1}(\Omega)$ is a bounded and surjective operator with
\begin{align*}
  \|\div\btau\|_{-1} = \sup_{0\neq v\in H_0^1(\Omega)} \frac{\dual{\div\btau}v}{\|v\|_1} = \sup_{0\neq v\in H_0^1(\Omega)} \frac{\ip{\btau}{\nabla v}_\Omega}{\|v\|_1} \leq \|\btau\|_\Omega.
\end{align*}
A similar statement is true for $\dDiv\colon \bbLtwosym(\Omega)\to H^{-2}(\Omega)$, i.e.,
\begin{align*}
  \|\dDiv\bN\|_{-2} \leq \|\bN\|_\Omega.
\end{align*}

For the analysis of our mixed formulations~\eqref{eq:strong_mixed_obst} and~\eqref{eq:plate:strong_mixed_obst} below, we define the spaces
\begin{subequations}\label{def:space_V}
\begin{align}
V_1 &:= \{ (\btau,\mu) \in [L^{2}(\Omega)]^{n}\times H^{-1}(\Omega) : \div\btau+\mu \in L^{2}(\Omega)\}, \\
V_2 &:= \{ (\bN,\mu) \in \bbLtwosym(\Omega)\times H^{-2}(\Omega) : \dDiv\bN - \mu \in L^{2}(\Omega)\}, 
\end{align}
\end{subequations}
and the non-empty, closed, and convex sets
\begin{align}\label{def:cone_K}
  K_1 :=\{(\boldsymbol\tau,\mu)\in V_1 : \mu \geq 0 \text{ in }\Omega\}, \qquad
  K_2 :=\{(\bN,\mu)\in V_2 : \mu \geq 0 \text{ in }\Omega\}.
\end{align}
Spaces $V_k$ are endowed with the inner products 
\begin{equation*}
\begin{array}{rll}
  \ip{(\bsigma,\lambda)}{(\btau,\mu)}_{V_1} := & \!\!\! \ip{\bsigma}{\btau}_\Omega + \ip{\div\bsigma+\lambda}{\div\btau+\mu}_\Omega \quad &\forall (\bsigma,\lambda),(\btau,\mu)\in V_1, \\
  \ip{(\bM,\lambda)}{(\bN,\mu)}_{V_2}  :=& \!\!\! \ip{\bM}{\bN}_\Omega + \ip{\dDiv\bM-\lambda}{\dDiv\bN-\mu}_\Omega \quad &\forall (\bM,\lambda),(\bN,\mu)\in V_2,
 \end{array}
\end{equation*}
and norms $\|\cdot\|_{V_k} = \sqrt{\ip{\cdot}{\cdot}_{V_k}}$ for $k=1,2$.

The next result states that $V_1$ and $V_{2}$ are a Hilbert spaces, which follows from some well-known results.
We include a proof for sake of completeness. 
\begin{proposition}[auxiliary result]
  Space $(V_k,\ip{\cdot}{\cdot}_{V_k})$ $(k=1,2)$ is a Hilbert space.
\end{proposition}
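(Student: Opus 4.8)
The plan is to check that $\langle\cdot,\cdot\rangle_{V_k}$ is genuinely an inner product (the only nontrivial point being definiteness) and then that $(V_k,\|\cdot\|_{V_k})$ is complete; the completeness is where the actual work lies, but it is a short standard argument once the right limits are identified.

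\emph{Inner product.} Bilinearity and symmetry are immediate from the definitions, and $\|\cdot\|_{V_k}^2\ge 0$ is clear. For definiteness with $k=1$: if $\|(\btau,\mu)\|_{V_1}=0$ then $\|\btau\|_\Omega=0$, so $\btau=0$ in $[L^2(\Omega)]^n$; hence $\div\btau=0$ in $H^{-1}(\Omega)$, and the vanishing of the second term gives $\|\mu\|_\Omega=\|\div\btau+\mu\|_\Omega=0$, so $\mu=0$ (first as an $L^2$ function, hence as an element of $H^{-1}(\Omega)$). The case $k=2$ is identical, using that $\bN=0$ forces $\dDiv\bN=0$ in $H^{-2}(\Omega)$.

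\emph{Completeness of $V_1$.} Let $(\btau_j,\mu_j)_j$ be Cauchy in $V_1$. By the definition of $\|\cdot\|_{V_1}$, the sequences $(\btau_j)_j$ and $(\div\btau_j+\mu_j)_j$ are Cauchy in $[L^2(\Omega)]^n$ and $L^2(\Omega)$, respectively, so by completeness of these $L^2$-type spaces there are $\btau\in[L^2(\Omega)]^n$ and $f\in L^2(\Omega)$ with $\btau_j\to\btau$ in $[L^2(\Omega)]^n$ and $\div\btau_j+\mu_j\to f$ in $L^2(\Omega)$. Since $\div\colon[L^2(\Omega)]^n\to H^{-1}(\Omega)$ is bounded (as recalled above), $\div\btau_j\to\div\btau$ in $H^{-1}(\Omega)$; together with the continuous embedding $L^2(\Omega)\hookrightarrow H^{-1}(\Omega)$ this yields that $\mu_j=(\div\btau_j+\mu_j)-\div\btau_j$ converges in $H^{-1}(\Omega)$ to $\mu:=f-\div\btau$. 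Then $\div\btau+\mu=f\in L^2(\Omega)$, so $(\btau,\mu)\in V_1$, and $\|(\btau_j,\mu_j)-(\btau,\mu)\|_{V_1}^2=\|\btau_j-\btau\|_\Omega^2+\|(\div\btau_j+\mu_j)-f\|_\Omega^2\to 0$. Hence $V_1$ is complete. Replacing $\div$ by $\dDiv\colon\bbLtwosym(\Omega)\to H^{-2}(\Omega)$, $[L^2(\Omega)]^n$ by $\bbLtwosym(\Omega)$, $H^{-1}(\Omega)$ by $H^{-2}(\Omega)$, and $\div\btau+\mu$ by $\dDiv\bN-\mu$ (so that $\mu_j=\dDiv\bN_j-(\dDiv\bN_j-\mu_j)\to\dDiv\bN-f=:\mu$ in $H^{-2}(\Omega)$), the same reasoning proves that $V_2$ is complete.

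\emph{Main point to watch.} There is no serious obstacle; the one thing to be careful about is that $\mu$ belongs a priori only to $H^{-k}(\Omega)$, so one cannot extract an $L^2$-limit of $(\mu_j)_j$ directly. Instead the limit $\mu$ must be recovered in $H^{-k}(\Omega)$ as $f-\div\btau$ (resp. $f+\dDiv\bN$ up to the chosen sign), using that $\div$ and $\dDiv$ are bounded into $H^{-1}(\Omega)$ and $H^{-2}(\Omega)$ and that these negative-order spaces contain $L^2(\Omega)$ continuously.
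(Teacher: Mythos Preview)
Your proof is correct and follows essentially the same route as the paper. The paper packages the completeness argument into an abstract lemma about graph-type spaces $W=\{x\in X:Ax\in Z\}$ with $Z\hookrightarrow Y$ and $A\colon X\to Y$ bounded, and then invokes it with $X=[L^2(\Omega)]^n\times H^{-1}(\Omega)$, $A(\btau,\mu)=\div\btau+\mu$, together with the norm equivalence $\|(\btau,\mu)\|_{V_1}\eqsim\|(\btau,\mu)\|_X+\|\div\btau+\mu\|_\Omega$; you instead work directly with the $V_1$-norm and recover the $H^{-1}$-limit of $\mu_j$ as $f-\div\btau$. The content is the same: identify the $L^2$-limits of $\btau_j$ and $\div\btau_j+\mu_j$, use boundedness of $\div$ and the embedding $L^2\hookrightarrow H^{-1}$ to pin down $\mu$, and check that the limit pair lies in $V_1$. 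Your explicit verification of definiteness of the inner product is a small addition the paper leaves implicit.
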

\begin{proof}
The proof is a consequence of the following general observation: 
Let $X, Y$, and $Z$ be Banach spaces such that $Z\hookrightarrow Y$ continuously embedded and let $A\colon X\to Y$ be a bounded linear operator. Then, the space
\[
    W :=\left\lbrace x\in X\mid Ax \in Z\right\rbrace
\]
equipped with the norm $\|\cdot\|_W := \|\cdot\|_X + \|A(\cdot)\|_Z$ is a Banach space. 
To prove this claim, we let $(w_j)_{j\in\N}\subset W$ ($j\in\N$) denote a Cauchy sequence with respect to $\|\cdot\|_W$. 
Then, $(w_j)_{j\in\N}$ is a Cauchy sequence in $X$ and $(Aw_j)_{j\in \N}$ is a Cauchy sequence in $Z$. Since $X$ and $Z$ are Banach spaces there exist $w\in X$ and $z\in Z$ with $w_j\to w$ in $X$ and $Aw_j \to z$ in $Z$, respectively. 
  Next, we show that $Aw = z\in Z$ which implies $x\in W$:
  \begin{equation*}
    \|Aw - z\|_Y \lesssim \|Aw-Aw_j\|_Y + \|Aw_j-z\|_Z \leq \|A\| \, \|w-w_j\|_X + \|Aw_j-z\|_Z \to 0 ~ \text{ as } ~ j\to \infty.
  \end{equation*}
Therefore, $W$ is closed under norm $\|\cdot\|_W$. 

For the case $k=1$, we apply the aforegoing result for $X = [L^2(\Omega)]^n\times H^{-1}(\Omega)$, $Y = H^{-1}(\Omega)$, $Z = L^2(\Omega)$, and $A(\btau,\mu) = \div\btau+\mu$. This, together with the norm equivalence
  \begin{align*}
    \|(\btau,\mu)\|_V\eqsim \|(\btau,\mu)\|_X + \|\div\btau+\mu\|_\Omega,
  \end{align*}
  which follows from the triangle inequality and the boundedness of $\div\colon [L^2(\Omega)]^n\to H^{-1}(\Omega)$, finishes the proof for this case. 
For the case $k=2$ one argues similarly. For brevity, we skip those details.
\end{proof}

The following result proves that smooth functions are dense in $V_k$ ($k=1,2$), which makes these spaces suitable for approximations with finite element spaces.

\begin{lemma}[density result]
$[C^\infty(\overline\Omega)]^{n+1}$ is dense in $V_1$ and $[C^\infty(\overline\Omega)]^{n\times n}\cap \bbLtwosym(\Omega)\times C^\infty(\overline\Omega)$ is dense in $V_2$.
\end{lemma}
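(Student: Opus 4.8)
The plan is to reduce both density statements to the classical fact that $C^\infty(\overline\Omega)$ is dense in $L^2(\Omega)$, by exploiting the precise form of the inner products defining $V_1$ and $V_2$.

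For $k=1$, I would first observe that the map $T_1\colon V_1\to [L^2(\Omega)]^n\times L^2(\Omega)$, $T_1(\btau,\mu):=(\btau,\div\btau+\mu)$, is an isometric isomorphism when the target carries the canonical product Hilbert structure. Indeed $\|T_1(\btau,\mu)\|^2=\|\btau\|_\Omega^2+\|\div\btau+\mu\|_\Omega^2=\|(\btau,\mu)\|_{V_1}^2$ directly from the definition of $\ip{\cdot}{\cdot}_{V_1}$; $T_1$ is clearly injective; and it is surjective because for any $(\btau,f)\in [L^2(\Omega)]^n\times L^2(\Omega)$ the pair $(\btau,\,f-\div\btau)$ belongs to $V_1$ (recall $\div\btau\in H^{-1}(\Omega)$) and is mapped by $T_1$ to $(\btau,f)$. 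Since $C^\infty(\overline\Omega)$ is dense in $L^2(\Omega)$, the set $[C^\infty(\overline\Omega)]^n\times C^\infty(\overline\Omega)$ is dense in the target. Given $(\btau,\mu)\in V_1$, I would choose $\btau_j\to\btau$ in $[L^2(\Omega)]^n$ and $f_j\to \div\btau+\mu$ in $L^2(\Omega)$ with $\btau_j\in[C^\infty(\overline\Omega)]^n$ and $f_j\in C^\infty(\overline\Omega)$, and set $\mu_j:=f_j-\div\btau_j$. The crucial point is that $\mu_j$ is \emph{smooth} because $\btau_j$ is, so $(\btau_j,\mu_j)\in[C^\infty(\overline\Omega)]^{n+1}$; and since $(\btau_j,\mu_j)=T_1^{-1}(\btau_j,f_j)$, the isometry gives $(\btau_j,\mu_j)\to(\btau,\mu)$ in $V_1$. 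Note that one never approximates $\mu$ directly in $H^{-1}(\Omega)$: it is reconstructed from the smooth data $\btau_j,f_j$.

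For $k=2$ I would argue in exactly the same way with $T_2(\bN,\mu):=(\bN,\dDiv\bN-\mu)$, which is an isometric isomorphism of $V_2$ onto $\bbLtwosym(\Omega)\times L^2(\Omega)$, and then set $\mu_j:=\dDiv\bN_j-f_j$ for smooth $\bN_j\to\bN$ and smooth $f_j\to\dDiv\bN-\mu$. The only additional ingredient is the density of smooth symmetric tensor fields in $\bbLtwosym(\Omega)$: approximate $\bN$ entrywise by $\widetilde{\bN}_j\in[C^\infty(\overline\Omega)]^{n\times n}$ and symmetrize, $\bN_j:=\tfrac12(\widetilde{\bN}_j+\widetilde{\bN}_j^\top)$, which still converges to $\bN=\bN^\top$ in $[L^2(\Omega)]^{n\times n}$ and lies in $[C^\infty(\overline\Omega)]^{n\times n}\cap\bbLtwosym(\Omega)$.

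I do not expect a serious obstacle here: the statement is essentially the identification of each $V_k$ with a product of $L^2$-type spaces on which smooth functions are dense. The only points requiring mild care are that the multiplier component $\mu_j$ must be recovered from the (smooth) remaining components rather than approximated on its own, and, in the case $k=2$, the symmetrization step needed to remain inside $\bbLtwosym(\Omega)$.
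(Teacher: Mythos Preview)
Your proof is correct. Both you and the paper exploit the same structural fact---that $(\btau,\mu)\mapsto(\btau,\div\btau+\mu)$ decouples the $V_1$-norm into two independent $L^2$ pieces---but you package it differently. You make the isometric isomorphism $T_1$ explicit and then \emph{construct} the approximating sequence by pulling back smooth data through $T_1^{-1}$; the paper instead runs a duality argument, showing that any functional $v'\in V_1'$ annihilating $[C^\infty(\overline\Omega)]^{n+1}$ has Riesz representer $(\bar\btau,\bar\mu)=0$, and the key step there is testing with $(\btau,-\div\btau)$ for smooth $\btau$, which is precisely $T_1^{-1}(\btau,0)$. So the underlying idea is identical; your route is a touch more transparent because it produces the approximating sequence explicitly, while the paper's duality argument is the standard Hahn--Banach style template. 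For $k=2$ both arguments carry over verbatim, and your remark on symmetrizing the smooth approximants of $\bN$ is exactly what is needed to stay inside $\bbLtwosym(\Omega)$.
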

\begin{proof}
  We show details only for the case $k=1$. For $k=2$ one argues following the same lines of the proof with obvious modifications. 
  Let $V:=V_1$.
  We prove the following equivalent statement: For any $v'\in V'$ with $v'(v)=0$ for all $v\in [C^\infty(\overline\Omega)]^{n+1}$ it follows that $v' \equiv 0$. 

  Let $v'\in V'$ be given and suppose that $v'|_{[C^\infty(\overline\Omega)]^{n+1}}=0$.
  Since $V$ is a Hilbert space, there exists a unique $(\bar{\btau},\bar{\mu})\in V$ representing $v'$ and we have that
  \begin{align*}
    \ip{(\bar{\btau},\bar{\mu})}{(\btau,\mu)}_V = v'(\btau,\mu) = 0 \quad\forall (\btau,\mu)\in [C^\infty(\overline\Omega)]^{n+1}.
  \end{align*}
  It suffices to show that the latter implies $(\bar{\btau},\bar{\mu})=(\mathbf{0},0)$. 
  Let $\btau\in [C^\infty(\overline\Omega)]^n$ be arbitrary and note that $\div\btau\in C^\infty(\overline\Omega)$. 
  We choose $\mu = -\div\btau\in C^\infty(\overline\Omega)$. This yields that
  \begin{align*}
    \ip{\bar{\btau}}{\btau}_{\Omega}  = \ip{(\bar{\btau},\bar{\mu})}{(\btau,-\div\btau)}_V = 0 \quad\forall \btau\in [C^\infty(\overline\Omega)]^n.
  \end{align*}
  By density of $[C^\infty(\overline\Omega)]^n$ in $[L^2(\Omega)]^n$ we infer that $\bar{\btau} = \mathbf{0}$. 
  Consequently, $\bar{\mu} \in L^2(\Omega)$ and 
  \begin{align*}
	\ip{\bar{\mu}}{\mu}_{\Omega} = \ip{(\bar{\btau},\bar{\mu})}{(\mathbf{0},\mu)}_V = 0 \quad\forall \mu\in C^\infty(\overline\Omega).
  \end{align*}
	Again, by density of $C^\infty(\overline\Omega)$ in $L^2(\Omega)$ we conclude $\bar{\mu} = 0$.
\end{proof}


\subsection{Finite element spaces and interpolation operators}\label{sec:fem_and_interp}

In this section, $\Omega$ denotes an open and bounded polygonal/polyhedral domain. 
Let $\mathcal{T}$ denote a conforming and quasi-uniform mesh of $\Omega$ such that $\overline{\Omega}=\cup_{T\in\mathcal{T}}\overline{T}$. 
Let $h_{\cT}\in L^{\infty}(\Omega)$ denote the mesh-size function, $h_{\cT}|_{T}:= h_T := \textrm{diam}(T)$ for $T\in \cT$. 
We set $h :=\max\{ h_T: T \in \cT \}$.
We also assume that each element $T\in\cT$ is an open triangle/tetrahedron. 
The set of edges/faces of the mesh is denoted by $\mathcal{E}$ and the set of vertices by $\mathcal{V}$; the set of interior vertices is denoted by $\mathcal{V}_{0}$.
In addition, for each $T\in\cT$, we use $\mathcal{E}_{T}$ and $\mathcal{V}_{T}$ to denote the edges/faces and vertices of $T$, respectively.
 
Given $T\in \cT$ and $m\in\mathbb{N}_{0}$, we set $\mathcal{P}^{m}(T):=\{z: T\to \mathbb{R} ; z \text{ is a polynomial of degree}\leq m\}$ and
\begin{equation*}
\mathcal{P}^{m}(\cT):=\{z\in L^2(\Omega) : z|_{T}\in \mathcal{P}^{m}(T) ~ \forall T\in\cT\}.  
\end{equation*}
The space of local Raviart--Thomas finite elements of order $m$ on $T$ is defined by $\text{RT}^{m}(T) := [\mathcal{P}^{m}(T)]^{n} + x\mathcal{P}^{m}(T)$, the global version of this space is defined by ~\cite{MR0483555}
$$
\text{RT}^{m}(\cT):=\{\btau\in \Hdivset\Omega : \btau|_{T}\in \text{RT}^{m}(T) ~ \forall T\in \cT\}.
$$

For two-dimensional domains, given $T\in\mathcal{T}$, we introduce the local space $\mathbb{X}(T) := \text{sym}(\text{RT}^{0}(T) \otimes \text{RT}^{1}(T))$ \cite[section 3.1]{2023arXiv230508693F}. 
We define the global discrete space 
\begin{equation*}
  \mathbb{X}(\cT):=\{ \bN \in \mathbb{L}^{2}_{\text{sym}}(\Omega) : \bN|_{T} \in \mathbb{X}(T) ~ \forall T\in \cT\}\cap \mathbb{H}(\dDiv;\Omega).
\end{equation*} 
The construction of $\mathbb{H}(\dDiv;\Omega)$ conforming finite elements is ongoing research, and we refer to~\cite{ChenHuang2024} for other variants and further references.

For each polynomial degree $m\in \mathbb{N}_{0}$, let $\Pi_{h}^{m}: L^{2}(\Omega) \to \mathcal{P}^{m}(\cT)$ denote the $L^{2}(\Omega)$-orthogonal projection of order $m$. 
A useful property of this operator is the following:
\begin{equation*}
\|(1 - \Pi_{h}^{m})\phi\|_{-(m+1)} 
=
\sup_{0\neq v\in H_0^{m+1}(\Omega)}\frac{\ip{(1 - \Pi_{h}^{m})\phi}{(1 - \Pi_{h}^{m})v}_{\Omega}}{\|v\|_{m+1}} 
\lesssim
\|h_{\cT}^{m+1}(1 - \Pi_{h}^{m})\phi\|_{\Omega} \quad \forall \phi \in L^{2}(\Omega).
\end{equation*}
Let $r > 1/2$. There exist canonical interpolation operators $\Pi^{\div}_{h}: [H^{r}(\Omega)]^{n}\cap \Hdivset\Omega \to \text{RT}^{0}(\cT)$ and $\Pi^{\dDiv}_{h}: [H^{1 + r}(\Omega)]^{2\times 2}\cap \mathbb{H}(\dDiv;\Omega) \to \mathbb{X}(\cT)$, which satisfy the properties
\begin{equation*}
\div \circ \,\Pi^{\div}_{h} = \Pi^{0}_{h} \circ\div,
\end{equation*}
and
\begin{equation*}
\dDiv \circ \, \Pi^{\dDiv}_{h} = \Pi^{1}_{h} \circ \dDiv,
\end{equation*}
respectively, see~\cite{2023arXiv230508693F}.


\subsection{Variational inequalities and mixed FEM}\label{sec:var_ineq_and_mix_FEM}
For the sake of future reference, we summarize here some results concerning the analysis of a particular class of variational inequalities. 


\subsubsection{Continuous problem}

Let $V$ and $M$ denote two Hilbert spaces and let $K\subset V$ denote a closed and convex set such that $0_{V}\in K$. 
We introduce the continuous bilinear forms $\mathfrak{a}: V\times V \to \mathbb{R}$ and $\mathfrak{b}: V\times M \to \mathbb{R}$ and the linear forms $\mathfrak{G}: V'\to \mathbb{R}$ and $\mathfrak{F}: M' \to \mathbb{R}$. 
In this work, we will consider formulations with the following structure: 
Find $(\boldsymbol\Sigma,\mathfrak{u})\in K\times M$ such that
\begin{equation}\label{eq:generalized_var_ineq}
\left\{
\begin{array}{lcl}
\mathfrak{a}(\boldsymbol\Sigma, \boldsymbol\Psi - \boldsymbol\Sigma) + \mathfrak{b}(\boldsymbol\Psi - \boldsymbol\Sigma, \mathfrak{u}) & \geq & \mathfrak{G}(\boldsymbol\Psi - \boldsymbol\Sigma) \\
\mathfrak{b}(\boldsymbol\Sigma, \mathfrak{v}) & = &  \mathfrak{F}(\mathfrak{v})
\end{array}
\right.
\end{equation}
for all $(\boldsymbol\Psi,\mathfrak{v})\in K\times M$. 
For the proof of well-posedness of problem \eqref{eq:generalized_var_ineq}, we require some additional assumptions. 
Suppose that $\mathfrak{a}$ is nonnegative on $V$, i.e., 
\begin{equation}\label{eq:assumption_non_negativity_a}
  \mathfrak{a}(\boldsymbol\Psi,\boldsymbol\Psi) \geq 0 \quad \forall \, \boldsymbol\Psi\in V,
\end{equation}
and that there exists $\alpha > 0$ such that 
\begin{equation}\label{eq:assumption_coercivity_a}
\mathfrak{a}(\boldsymbol\Psi,\boldsymbol\Psi) \geq \alpha\|\boldsymbol\Psi\|_{V}^{2} \quad \forall \, \boldsymbol\Psi\in N:=\{\boldsymbol\Psi\in V :  \mathfrak{b}(\boldsymbol\Psi,\mathfrak{v}) = 0 \quad \forall \,\mathfrak{v}\in M\}. 
\end{equation}
Finally, we assume an inf-sup condition for $\mathfrak{b}$: There exists a closed subspace $W\subset K$ and a constant $\beta > 0$ such that
\begin{equation}\label{eq:assumption_inf_sup_b}
\sup_{\boldsymbol\Psi\in W}\frac{\mathfrak{b}(\boldsymbol\Psi,\mathfrak{v})}{\|\boldsymbol\Psi\|_{V}} \geq \beta \|\mathfrak{v}\|_{M} \quad \forall \, \mathfrak{v}\in M.
\end{equation}

Before presenting the main result of this section, we prove the following lemma.

\begin{lemma}[stability estimate]\label{lemma:stab_estimate}
If assumptions \eqref{eq:assumption_coercivity_a} and \eqref{eq:assumption_inf_sup_b} hold, then any solution $(\boldsymbol\Sigma, \mathfrak{u})\in K\times M$ to problem \eqref{eq:generalized_var_ineq} satisfies the bound $\|\boldsymbol\Sigma\|_{V} + \|\mathfrak{u}\|_M \lesssim \|\mathfrak{F}\|_{M'} + \|\mathfrak{G}\|_{V'}$.
\end{lemma}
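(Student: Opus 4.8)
The plan is to establish the two estimates $\|\boldsymbol\Sigma\|_{V}\lesssim \|\mathfrak{F}\|_{M'}+\|\mathfrak{G}\|_{V'}$ and $\|\mathfrak{u}\|_{M}\lesssim \|\mathfrak{F}\|_{M'}+\|\mathfrak{G}\|_{V'}$ in this order, using the boundedness of the bilinear forms $\mathfrak{a}$ and $\mathfrak{b}$ throughout (the hidden constants will depend on $\alpha$, $\beta$, and the continuity constants $\|\mathfrak{a}\|$, $\|\mathfrak{b}\|$).

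First I would record a \emph{lifting} property of $\mathfrak{b}$. Since $W\subset K$ is a closed subspace, $M$ is a Hilbert space, and the inf-sup condition \eqref{eq:assumption_inf_sup_b} holds over $W$, the operator $\boldsymbol\Psi\mapsto \mathfrak{b}(\boldsymbol\Psi,\cdot)$ from $W$ into $M'$ is surjective (standard consequence of the closed range theorem / Banach--Ne\v{c}as--Babu\v{s}ka lemma), and for every $\mathfrak{F}\in M'$ there exists $\boldsymbol\chi_{F}\in W\subset K$ with $\mathfrak{b}(\boldsymbol\chi_{F},\mathfrak{v})=\mathfrak{F}(\mathfrak{v})$ for all $\mathfrak{v}\in M$ and $\|\boldsymbol\chi_{F}\|_{V}\le \beta^{-1}\|\mathfrak{F}\|_{M'}$. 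Setting $\boldsymbol\Sigma_{0}:=\boldsymbol\Sigma-\boldsymbol\chi_{F}$, the second equation in \eqref{eq:generalized_var_ineq} gives $\mathfrak{b}(\boldsymbol\Sigma_{0},\mathfrak{v})=0$ for all $\mathfrak{v}\in M$, i.e.\ $\boldsymbol\Sigma_{0}\in N$, so that coercivity \eqref{eq:assumption_coercivity_a} applies to $\boldsymbol\Sigma_{0}$.

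To bound $\boldsymbol\Sigma$, I would test the variational inequality in \eqref{eq:generalized_var_ineq} with the admissible choice $\boldsymbol\Psi=\boldsymbol\chi_{F}\in K$. Since $\boldsymbol\chi_{F}-\boldsymbol\Sigma=-\boldsymbol\Sigma_{0}\in N$, the $\mathfrak{b}$-term vanishes and one obtains $\mathfrak{a}(\boldsymbol\Sigma,\boldsymbol\Sigma_{0})\le \mathfrak{G}(\boldsymbol\Sigma_{0})$. Then $\alpha\|\boldsymbol\Sigma_{0}\|_{V}^{2}\le \mathfrak{a}(\boldsymbol\Sigma_{0},\boldsymbol\Sigma_{0})=\mathfrak{a}(\boldsymbol\Sigma,\boldsymbol\Sigma_{0})-\mathfrak{a}(\boldsymbol\chi_{F},\boldsymbol\Sigma_{0})\le \mathfrak{G}(\boldsymbol\Sigma_{0})-\mathfrak{a}(\boldsymbol\chi_{F},\boldsymbol\Sigma_{0})\le \big(\|\mathfrak{G}\|_{V'}+\|\mathfrak{a}\|\,\|\boldsymbol\chi_{F}\|_{V}\big)\|\boldsymbol\Sigma_{0}\|_{V}$, whence $\|\boldsymbol\Sigma_{0}\|_{V}\lesssim \|\mathfrak{G}\|_{V'}+\|\mathfrak{F}\|_{M'}$; the triangle inequality $\|\boldsymbol\Sigma\|_{V}\le \|\boldsymbol\chi_{F}\|_{V}+\|\boldsymbol\Sigma_{0}\|_{V}$ then gives the first estimate. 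For the bound on $\mathfrak{u}$ I would test \eqref{eq:generalized_var_ineq} with $\boldsymbol\Psi=\boldsymbol\chi_{F}+\boldsymbol\chi$ for arbitrary $\boldsymbol\chi\in W$ (admissible since $W$ is a subspace of $K$); using $\boldsymbol\chi_{F}+\boldsymbol\chi-\boldsymbol\Sigma=\boldsymbol\chi-\boldsymbol\Sigma_{0}$ and $\mathfrak{b}(\boldsymbol\Sigma_{0},\mathfrak{u})=0$ yields $\mathfrak{b}(\boldsymbol\chi,\mathfrak{u})\ge \mathfrak{G}(\boldsymbol\chi-\boldsymbol\Sigma_{0})-\mathfrak{a}(\boldsymbol\Sigma,\boldsymbol\chi-\boldsymbol\Sigma_{0})$ for all $\boldsymbol\chi\in W$. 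The key observation is that $\boldsymbol\chi\mapsto \mathfrak{b}(\boldsymbol\chi,\mathfrak{u})-\mathfrak{G}(\boldsymbol\chi)+\mathfrak{a}(\boldsymbol\Sigma,\boldsymbol\chi)$ is a linear functional on the subspace $W$ that is bounded below (by the constant $-\mathfrak{G}(\boldsymbol\Sigma_{0})+\mathfrak{a}(\boldsymbol\Sigma,\boldsymbol\Sigma_{0})$), hence identically zero; thus $\mathfrak{b}(\boldsymbol\chi,\mathfrak{u})=\mathfrak{G}(\boldsymbol\chi)-\mathfrak{a}(\boldsymbol\Sigma,\boldsymbol\chi)$ for all $\boldsymbol\chi\in W$. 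Inserting this into the inf-sup condition \eqref{eq:assumption_inf_sup_b} gives $\beta\|\mathfrak{u}\|_{M}\le \|\mathfrak{G}\|_{V'}+\|\mathfrak{a}\|\,\|\boldsymbol\Sigma\|_{V}$, and the already established bound on $\|\boldsymbol\Sigma\|_{V}$ concludes the proof.

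I expect the main obstacle to be the careful bookkeeping with the subspace $W\subsetneq V$: one must extract the lifting $\boldsymbol\chi_{F}$ from the inf-sup \emph{over $W$} rather than over all of $V$, choose the test functions in $\boldsymbol\chi_{F}+W\subset K$ precisely so that the $\mathfrak{b}$-contributions collapse onto the kernel $N$ (where coercivity is available), and exploit linearity on $W$ to upgrade the one-sided variational inequality into a genuine variational identity for $\mathfrak{u}$ that can be paired with \eqref{eq:assumption_inf_sup_b}. The rest is routine application of Cauchy--Schwarz-type bounds and the triangle inequality.
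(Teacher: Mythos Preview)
Your proof is correct and follows essentially the same two-step strategy as the paper: construct a lifting $\boldsymbol\chi_F\in W$ via the inf-sup condition, use coercivity on the kernel to bound $\|\boldsymbol\Sigma\|_V$, then extract a variational \emph{identity} on $W$ to bound $\|\mathfrak u\|_M$ via~\eqref{eq:assumption_inf_sup_b}. The only notable difference is in Step~2: the paper tests with $\boldsymbol\Psi=\boldsymbol\Sigma\pm\widetilde{\boldsymbol\Psi}$ for $\widetilde{\boldsymbol\Psi}\in W$, which requires the auxiliary fact $\boldsymbol\Sigma+W\subset K$ (cited from \cite[Lemma~2.1]{MR2073936}), whereas you test with $\boldsymbol\Psi=\boldsymbol\chi_F+\boldsymbol\chi\in W\subset K$ and then argue that a linear functional on $W$ bounded below must vanish identically. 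Your route is slightly more self-contained since it avoids that external lemma; both arrive at the same identity $\mathfrak b(\boldsymbol\chi,\mathfrak u)=\mathfrak G(\boldsymbol\chi)-\mathfrak a(\boldsymbol\Sigma,\boldsymbol\chi)$ on $W$.
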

\begin{proof}
We proceed in two steps by following \cite[Lemma 2.2]{MR2073936}.

\underline{Step 1.} (estimation of $\|\boldsymbol\Sigma\|_{V}$) Use the inf-sup condition \eqref{eq:assumption_inf_sup_b} to conclude the existence of $\widetilde{\boldsymbol\Sigma}\in W$ satisfying $\mathfrak{b}(\widetilde{\boldsymbol\Sigma},\mathfrak{v}) = \mathfrak{F}(\mathfrak{v})$ for all $\mathfrak{v}\in M$ and $\|\widetilde{\boldsymbol\Sigma}\|_{V}\lesssim \|\mathfrak{F}\|_{M'}$. 
The previous properties of $\widetilde{\Sigma}$ imply, in view of \eqref{eq:generalized_var_ineq}, that $\mathfrak{b}(\widetilde{\boldsymbol\Sigma} - \boldsymbol\Sigma,\mathfrak{v}) = 0$ for all $\mathfrak{v}\in M$ and thus $\widetilde{\boldsymbol\Sigma} - \boldsymbol\Sigma\in N$. 
Now, we use the triangle inequality to obtain
\begin{equation}\label{eq:triangle_sigma}
\|\boldsymbol\Sigma\|_{V} 
 \leq
 \|\widetilde{\boldsymbol\Sigma} - \boldsymbol\Sigma\|_{V} + \|\widetilde{\boldsymbol\Sigma}\|_{V} 
 \lesssim  
 \|\widetilde{\boldsymbol\Sigma} - \boldsymbol\Sigma\|_{V} + \|\mathfrak{F}\|_{M'}.
\end{equation}
To estimate $ \|\widetilde{\boldsymbol\Sigma} - \boldsymbol\Sigma\|_{V} $, we set $(\boldsymbol\Psi,\mathfrak{v}) = (\widetilde{\boldsymbol\Sigma},0)$ in \eqref{eq:generalized_var_ineq} and use the fact that $\mathfrak{b}(\widetilde{\boldsymbol\Sigma} - \boldsymbol\Sigma,\mathfrak{u}) = 0$ to arrive at
\begin{equation*}
\mathfrak{a}(\widetilde{\boldsymbol\Sigma}, \widetilde{\boldsymbol\Sigma} - \boldsymbol\Sigma)  -  \mathfrak{G}(\widetilde{\boldsymbol\Sigma} - \boldsymbol\Sigma)  \geq \mathfrak{a}(\widetilde{\boldsymbol\Sigma} - \boldsymbol\Sigma, \widetilde{\boldsymbol\Sigma} - \boldsymbol\Sigma).
\end{equation*}
Since $\widetilde{\boldsymbol\Sigma} - \boldsymbol\Sigma\in N$, we invoke assumption \eqref{eq:assumption_coercivity_a} in combination with the estimate $\|\widetilde{\boldsymbol\Sigma}\|_{V}\lesssim \|\mathfrak{F}\|_{M'}$ to conclude that $\|\widetilde{\boldsymbol\Sigma} - \boldsymbol\Sigma\|_{V} \lesssim \|\mathfrak{F}\|_{M'} + \|\mathfrak{G}\|_{V'}$. 
Using this estimate in \eqref{eq:triangle_sigma} yields that $\|\boldsymbol\Sigma\|_{V} \lesssim \|\mathfrak{F}\|_{M'} + \|\mathfrak{G}\|_{V'}$.

\underline{Step 2.} (estimation of $\|\mathfrak{u}\|_M$) Since $K$ is closed and convex, we have that $\boldsymbol\Sigma \pm \widetilde{\boldsymbol\Psi} \in K$ for all $\widetilde{\boldsymbol\Psi}\in W$; see \cite[Lemma 2.1]{MR2073936}. 
Hence, replacing $\boldsymbol\Psi = \boldsymbol\Sigma \pm \widetilde{\boldsymbol\Psi}$ in \eqref{eq:generalized_var_ineq} results in 
\begin{equation*}
\mathfrak{a}(\boldsymbol\Sigma, \widetilde{\boldsymbol\Psi} ) + \mathfrak{b}(\widetilde{\boldsymbol\Psi} , \mathfrak{u}) = \mathfrak{G}(\widetilde{\boldsymbol\Psi}) \quad \forall \, \widetilde{\boldsymbol\Psi}\in W.
\end{equation*}
We thus use the inf-sup condition \eqref{eq:assumption_inf_sup_b} in combination with the bound for $\|\boldsymbol\Sigma\|_{V}$ to conclude that $\|\mathfrak{u}\|_{M} \lesssim \|\boldsymbol\Sigma\|_{V} + \|\mathfrak{G}\|_{M'} \lesssim \|\mathfrak{F}\|_{M'} + \|\mathfrak{G}\|_{M'}$.
\end{proof}

The next result  follows from \cite[Theorem 2.3]{MR2073936} and establishes the well-posedness of problem \eqref{eq:generalized_var_ineq}. For the sake of completeness, we provide a proof.

\begin{theorem}[well-posedness]\label{thm:wp_generalized}
If assumptions \eqref{eq:assumption_non_negativity_a}, \eqref{eq:assumption_coercivity_a}, and \eqref{eq:assumption_inf_sup_b} hold, then there exists a unique solution $(\boldsymbol\Sigma,\mathfrak{u})\in K\times M$ for problem \eqref{eq:generalized_var_ineq}.
\end{theorem}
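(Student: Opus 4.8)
The plan is to follow the classical strategy for mixed variational inequalities (as in \cite[Theorem 2.3]{MR2073936}): reduce the saddle-point-type problem \eqref{eq:generalized_var_ineq} to a coercive variational inequality on the affine manifold determined by the constraint equation, apply the Lions--Stampacchia theorem there, and then recover the Lagrange-multiplier component $\mathfrak{u}$ via the inf-sup condition \eqref{eq:assumption_inf_sup_b}. Uniqueness then follows from the stability estimate of Lemma~\ref{lemma:stab_estimate} applied to the difference of two solutions with $\mathfrak{F}=0$ and $\mathfrak{G}=0$.

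\emph{Existence of $\boldsymbol\Sigma$.} First I would use \eqref{eq:assumption_inf_sup_b} to produce a fixed $\widetilde{\boldsymbol\Sigma}\in W\subset K$ with $\mathfrak{b}(\widetilde{\boldsymbol\Sigma},\mathfrak{v})=\mathfrak{F}(\mathfrak{v})$ for all $\mathfrak{v}\in M$ and shift the unknown, writing $\boldsymbol\Sigma = \widetilde{\boldsymbol\Sigma} + \boldsymbol\Sigma_0$. Since $\boldsymbol\Sigma$ must satisfy the second line of \eqref{eq:generalized_var_ineq}, one needs $\boldsymbol\Sigma_0 \in N$. The convex set $K$ and the condition $0_V \in K$, combined with $\widetilde{\boldsymbol\Sigma}\pm W\subset K$ (as recalled in \cite[Lemma 2.1]{MR2073936}), let me identify the admissible set for $\boldsymbol\Sigma_0$; in particular the relevant feasible set $\widetilde K := (K - \widetilde{\boldsymbol\Sigma})\cap N$ is closed and convex and nonempty (it contains $\boldsymbol\Sigma - \widetilde{\boldsymbol\Sigma}$ for any solution, and also $0$ after re-centering appropriately; care is needed here). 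Testing the first line of \eqref{eq:generalized_var_ineq} only with $\boldsymbol\Psi\in K$ for which $\boldsymbol\Psi - \widetilde{\boldsymbol\Sigma}\in N$, the term $\mathfrak{b}(\boldsymbol\Psi - \boldsymbol\Sigma,\mathfrak{u})$ vanishes because $\boldsymbol\Psi-\boldsymbol\Sigma\in N$, so $\mathfrak{u}$ drops out and one obtains the reduced variational inequality: find $\boldsymbol\Sigma$ in this feasible set with $\mathfrak{a}(\boldsymbol\Sigma,\boldsymbol\Psi-\boldsymbol\Sigma)\geq \mathfrak{G}(\boldsymbol\Psi-\boldsymbol\Sigma)$ for all admissible $\boldsymbol\Psi$. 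On $N$ the form $\mathfrak{a}$ is coercive by \eqref{eq:assumption_coercivity_a} and bounded, and $\mathfrak{G}\in V'$, so the Lions--Stampacchia theorem yields a unique such $\boldsymbol\Sigma$. One then checks that this $\boldsymbol\Sigma$ in fact solves the first inequality in \eqref{eq:generalized_var_ineq} tested against \emph{all} $\boldsymbol\Psi\in K$, not merely the restricted class — this requires showing $\mathfrak{b}(\boldsymbol\Psi-\boldsymbol\Sigma,\cdot)$ can be absorbed, which is exactly where $\mathfrak{u}$ must be constructed.

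\emph{Recovery of $\mathfrak{u}$.} Taking $\boldsymbol\Psi = \boldsymbol\Sigma\pm\widetilde{\boldsymbol\Psi}$ with $\widetilde{\boldsymbol\Psi}\in W$ in the reduced inequality gives the equality $\mathfrak{a}(\boldsymbol\Sigma,\widetilde{\boldsymbol\Psi}) = \mathfrak{G}(\widetilde{\boldsymbol\Psi})$ for all $\widetilde{\boldsymbol\Psi}\in W$ with $\widetilde{\boldsymbol\Psi}\in N$; more generally, the functional $\widetilde{\boldsymbol\Psi}\mapsto \mathfrak{G}(\widetilde{\boldsymbol\Psi}) - \mathfrak{a}(\boldsymbol\Sigma,\widetilde{\boldsymbol\Psi})$ on $W$ vanishes on $W\cap N$, hence by the inf-sup condition \eqref{eq:assumption_inf_sup_b} (a closed-range argument / Banach--Ne\v{c}as) there exists a unique $\mathfrak{u}\in M$ with $\mathfrak{b}(\widetilde{\boldsymbol\Psi},\mathfrak{u}) = \mathfrak{G}(\widetilde{\boldsymbol\Psi}) - \mathfrak{a}(\boldsymbol\Sigma,\widetilde{\boldsymbol\Psi})$ for all $\widetilde{\boldsymbol\Psi}\in W$. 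Combining this $\mathfrak{u}$ with the reduced variational inequality, using the splitting of a generic $\boldsymbol\Psi\in K$ and that $\mathfrak{a}$ is nonnegative on all of $V$ (assumption \eqref{eq:assumption_non_negativity_a}, used to control the cross term $\mathfrak{a}(\boldsymbol\Sigma-\boldsymbol\Psi,\boldsymbol\Sigma-\boldsymbol\Psi)\geq 0$ analogously to the manipulation in the proof of Lemma~\ref{lemma:stab_estimate}, Step~1), I would verify that $(\boldsymbol\Sigma,\mathfrak{u})$ satisfies the full system \eqref{eq:generalized_var_ineq}.

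\emph{Uniqueness.} If $(\boldsymbol\Sigma_1,\mathfrak{u}_1)$ and $(\boldsymbol\Sigma_2,\mathfrak{u}_2)$ are two solutions, the difference solves the homogeneous problem (in the sense that the relevant combined inequality obtained by testing the first line of each with the other's $\boldsymbol\Sigma$ and adding gives $\mathfrak{a}(\boldsymbol\Sigma_1-\boldsymbol\Sigma_2,\boldsymbol\Sigma_1-\boldsymbol\Sigma_2)\leq 0$, while the second lines give $\boldsymbol\Sigma_1-\boldsymbol\Sigma_2\in N$, so coercivity \eqref{eq:assumption_coercivity_a} forces $\boldsymbol\Sigma_1=\boldsymbol\Sigma_2$; then \eqref{eq:assumption_inf_sup_b} applied to $\mathfrak{b}(\widetilde{\boldsymbol\Psi},\mathfrak{u}_1-\mathfrak{u}_2)=0$ for all $\widetilde{\boldsymbol\Psi}\in W$ forces $\mathfrak{u}_1=\mathfrak{u}_2$). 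Alternatively one simply quotes Lemma~\ref{lemma:stab_estimate} with $\mathfrak{F}=0$, $\mathfrak{G}=0$. The main obstacle is the bookkeeping in the existence step: correctly identifying the affine feasible set on which Lions--Stampacchia is applied (it is $\widetilde{\boldsymbol\Sigma} + \widetilde K$ with $\widetilde K$ a closed convex subset of $N$), ensuring it is nonempty and that the reduced inequality is genuinely equivalent to the full one once $\mathfrak{u}$ is reintroduced — in particular, checking that testing with the restricted class of $\boldsymbol\Psi$ suffices and that the solution of the reduced problem, together with the $\mathfrak{u}$ recovered from the inf-sup condition, does satisfy the inequality against every $\boldsymbol\Psi\in K$. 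This is precisely the delicate point handled in \cite[Theorem 2.3]{MR2073936}, and I would mirror that argument, citing \cite[Lemmas 2.1--2.2]{MR2073936} where convenient.
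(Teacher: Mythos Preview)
Your uniqueness argument matches the paper's exactly. For existence, however, the paper takes a different route: it does \emph{not} reduce to a coercive variational inequality on $N$ and then recover $\mathfrak{u}$ as you propose. Instead, it reformulates~\eqref{eq:generalized_var_ineq} as a single variational inequality $A(S,T-S)\geq L(T-S)$ on $K\times M$, perturbs the bilinear form to $A_\delta := A + \delta(\cdot,\cdot)_{V}+\delta(\cdot,\cdot)_{M}$ (which is coercive on all of $V\times M$ thanks to~\eqref{eq:assumption_non_negativity_a}), applies Stampacchia's theorem to obtain a unique $S_\delta=(\boldsymbol\Sigma_\delta,\mathfrak{u}_\delta)$, derives $\delta$-uniform bounds by replaying the argument of Lemma~\ref{lemma:stab_estimate}, extracts a weak limit, and passes to the limit using weak lower semicontinuity of $\boldsymbol\Sigma\mapsto\mathfrak{a}(\boldsymbol\Sigma,\boldsymbol\Sigma)$ (again from~\eqref{eq:assumption_non_negativity_a}). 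Since the paper presents this as the proof of \cite[Theorem~2.3]{MR2073936}, your claim to be ``mirroring'' that reference is likely off.

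Your direct-reduction strategy is also viable, but the step you leave vaguest --- verifying that the solution of the reduced problem on $K\cap(\widetilde{\boldsymbol\Sigma}+N)$, together with the recovered $\mathfrak{u}$, satisfies the inequality for \emph{every} $\boldsymbol\Psi\in K$ --- is not handled by controlling $\mathfrak{a}(\boldsymbol\Sigma-\boldsymbol\Psi,\boldsymbol\Sigma-\boldsymbol\Psi)$ via~\eqref{eq:assumption_non_negativity_a} as you suggest. What actually closes it: for arbitrary $\boldsymbol\Psi\in K$, use~\eqref{eq:assumption_inf_sup_b} to pick $\widetilde{\boldsymbol\Psi}_W\in W$ with $\mathfrak{b}(\widetilde{\boldsymbol\Psi}_W,\cdot)=\mathfrak{b}(\boldsymbol\Psi-\boldsymbol\Sigma,\cdot)$; then $\boldsymbol\Psi':=\boldsymbol\Psi-\widetilde{\boldsymbol\Psi}_W\in K$ (since $K-W\subset K$ by \cite[Lemma~2.1]{MR2073936}) with $\boldsymbol\Psi'-\boldsymbol\Sigma\in N$, so the reduced inequality applies at $\boldsymbol\Psi'$; adding the identity $\mathfrak{a}(\boldsymbol\Sigma,\widetilde{\boldsymbol\Psi}_W)+\mathfrak{b}(\widetilde{\boldsymbol\Psi}_W,\mathfrak{u})=\mathfrak{G}(\widetilde{\boldsymbol\Psi}_W)$ yields the full inequality. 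This argument never invokes~\eqref{eq:assumption_non_negativity_a}, so your route would in fact establish the theorem under weaker hypotheses than stated, whereas the paper's perturbation proof genuinely needs~\eqref{eq:assumption_non_negativity_a} both for the coercivity of $A_\delta$ and for the $\liminf$ step in the limit passage.
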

\begin{proof}
We begin by reformulating problem \eqref{eq:generalized_var_ineq}. To accomplish task, we define the Hilbert space $\mathbf{H}=V\times M$ equipped with the norm 
\begin{equation*}
\|T\|_{\mathbf{H}}:=\left( \|\boldsymbol\Psi\|_{V}^{2} + \|\mathfrak{v}\|_{M}^{2}\right)^{\frac{1}{2}} \quad \forall \, T=(\boldsymbol\Psi,\mathfrak{v})\in \mathbf{H}.
\end{equation*}
Additionally, we introduce the continuous bilinear form $A:\mathbf{H}\times \mathbf{H}\to \mathbb{R}$ and the continuous linear form $L:\mathbf{H}\to \mathbb{R}$ defined, respectively, by
\begin{equation*}
\begin{array}{rcll}
A(S,T) &:= & \mathfrak{a}(\boldsymbol\Sigma,\boldsymbol\Psi) + \mathfrak{b}(\boldsymbol\Psi,\mathfrak{u}) - \mathfrak{b}(\boldsymbol\Sigma,\mathfrak{v}) \qquad &\forall \, S=(\boldsymbol\Sigma,\mathfrak{u}),\, T=(\boldsymbol\Psi,\mathfrak{v}) \in \mathbf{H},\\
L(T) & := & \mathfrak{G}(\boldsymbol\Psi) - \mathfrak{F}(\mathfrak{v})\quad &\forall \, T=(\boldsymbol\Psi,\mathfrak{v}) \in \mathbf{H}.
\end{array}
\end{equation*}
With these ingredients at hand, we reformulate problem \eqref{eq:generalized_var_ineq} as: Find $S=(\boldsymbol\Sigma,\mathfrak{u})\in K\times M$ such that 
\begin{equation}\label{eq:generalized_reformulated}
A(S, T - S) \geq L(T - S) \quad \forall \, T\in K\times M.
\end{equation}

We now proceed on two steps.

\underline{Step 1.} (existence of solution) 
To prove that \eqref{eq:generalized_reformulated} admits a solution, we proceed by using a perturbation technique. Given $\delta > 0$ small, we introduce the bilinear form $A_{\delta}:\mathbf{H}\times \mathbf{H}\to \mathbb{R}$ defined by
\begin{equation*}
A_{\delta}(S,T):= A(S,T) + \delta(\boldsymbol\Sigma,\boldsymbol\Psi)_{V} + \delta(\mathfrak{u},\mathfrak{v})_{M} \quad \forall \, S = (\boldsymbol\Sigma,\mathfrak{u}),\,T = (\boldsymbol\Psi,\mathfrak{v}) \in \mathbf{H}.
\end{equation*}
Assumption \eqref{eq:assumption_non_negativity_a} immediately implies the coercivity of $A_{\delta}$ on $\mathbf{H}$. 
Consequently, we apply Stampacchia's theorem to problem
\begin{equation}\label{eq:generalized_reformulated_perturbed}
S_{\delta}=(\boldsymbol\Sigma_{\delta},\mathfrak{u}_{\delta})\in K\times M: \quad 
A_{\delta}(S_{\delta}, T - S_{\delta}) \geq L(T - S_{\delta}) \quad \forall \, T\in K\times M,
\end{equation}
to conclude the existence of a unique parameter dependent solution $S_{\delta}=(\boldsymbol\Sigma_{\delta},\mathfrak{u}_{\delta})$.

We show now that the sequence $\{(\boldsymbol\Sigma_{\delta},\mathfrak{u}_{\delta})\}_{\delta > 0}$ is uniformly bounded in $\mathbf{H}$, by using the arguments elaborated in the proof of Lemma \ref{lemma:stab_estimate}. 
The inf-sup condition \eqref{eq:assumption_inf_sup_b} yields the existence of $\widetilde{\boldsymbol\Sigma}_{\delta}\in W$ such that $b(\widetilde{\boldsymbol\Sigma}_{\delta},\mathfrak{v}) = \mathfrak{F}(\mathfrak{v}) + \delta(\mathfrak{u}_{\delta},\mathfrak{v})$ for all $\mathfrak{v}\in M$ and $\|\widetilde{\boldsymbol\Sigma}_{\delta}
\|_{V}\lesssim \|\mathfrak{F}\|_{M'} + \delta\|\mathfrak{u}_{\delta}\|_{M}$. 
Hence, $b(\widetilde{\boldsymbol\Sigma}_{\delta}-\boldsymbol\Sigma_{\delta},\mathfrak{v}) = 0$ for all $\mathfrak{v}\in M$, which implies that $\widetilde{\boldsymbol\Sigma}_{\delta} - \boldsymbol\Sigma_{\delta}\in N$. 
To control $\|\widetilde{\boldsymbol\Sigma}_{\delta} - \boldsymbol\Sigma_{\delta}\|_{V} $, we set $T = (\widetilde{\boldsymbol\Sigma}_{\delta},\mathfrak{u}_{\delta})$ in \eqref{eq:generalized_reformulated_perturbed}, use that $\mathfrak{b}(\widetilde{\boldsymbol\Sigma}_{\delta} - \boldsymbol\Sigma_{\delta},\mathfrak{u}) = 0$, and invoke assumption \eqref{eq:assumption_coercivity_a}.
 These arguments yield
\begin{equation*}
\mathfrak{a}(\widetilde{\boldsymbol\Sigma}_{\delta}, \widetilde{\boldsymbol\Sigma}_{\delta} - \boldsymbol\Sigma_{\delta}) + \delta(\widetilde{\boldsymbol\Sigma}_{\delta}, \widetilde{\boldsymbol\Sigma}_{\delta} - \boldsymbol\Sigma_{\delta})_{V} -  \mathfrak{G}(\widetilde{\boldsymbol\Sigma}_{\delta} - \boldsymbol\Sigma_{\delta})  
 \geq \mathfrak{a}(\widetilde{\boldsymbol\Sigma}_{\delta} - \boldsymbol\Sigma_{\delta}, \widetilde{\boldsymbol\Sigma}_{\delta} - \boldsymbol\Sigma_{\delta}) + \delta\|\widetilde{\boldsymbol\Sigma}_{\delta} - \boldsymbol\Sigma_{\delta}\|_{V}^{2}
 \geq (\alpha + \delta)\|\widetilde{\boldsymbol\Sigma}_{\delta} - \boldsymbol\Sigma_{\delta}\|_{V}^{2}.
\end{equation*}
From this estimate, we obtain that $\|\widetilde{\boldsymbol\Sigma}_{\delta} - \boldsymbol\Sigma_{\delta}\|_{V} \lesssim (1+\delta)\|\widetilde{\boldsymbol\Sigma}_{\delta}\|_{V} + \|\mathfrak{G}\|_{V'} \lesssim \|\mathfrak{F}\|_{M'} + \|\mathfrak{G}\|_{V'} + \delta\|\mathfrak{u}_{\delta}\|_{M}$. Therefore, we conclude that
\begin{equation*}
\|\boldsymbol\Sigma_{\delta}\|_{V} 
\leq \|\widetilde{\boldsymbol\Sigma}_{\delta}\|_{V} + \|\widetilde{\boldsymbol\Sigma}_{\delta} - \boldsymbol\Sigma_{\delta}\|_{V} 
\lesssim
 \|\mathfrak{F}\|_{M'} + \|\mathfrak{G}\|_{V'} + \delta\|\mathfrak{u}_{\delta}\|_{M}.
\end{equation*}
To derive the bound for $\|\mathfrak{u}_{\delta}\|_{M}$ we evaluate, in \eqref{eq:generalized_reformulated_perturbed}, $T=(\boldsymbol\Sigma_{\delta}\pm\widetilde{\boldsymbol\Psi},\mathfrak{u}_{\delta})$ with $\widetilde{\boldsymbol\Psi}\in W$. This results in
\begin{equation*}
\mathfrak{a}(\boldsymbol\Sigma_{\delta},\widetilde{\boldsymbol\Psi}) + \mathfrak{b}(\widetilde{\boldsymbol\Psi},\mathfrak{u}_{\delta}) + \delta(\boldsymbol\Sigma_{\delta},\widetilde{\boldsymbol\Psi})_{V} = \mathfrak{G}(\widetilde{\boldsymbol\Psi}) \quad \forall \, \widetilde{\boldsymbol\Psi}\in W.
\end{equation*}
This equation, in combination with the inf-sup condition \eqref{eq:assumption_inf_sup_b} and the bound obtained for $\|\boldsymbol\Sigma_{\delta}\|_{V}$, yields that $\|\mathfrak{u}_{\delta}\|_{M}\lesssim  \|\mathfrak{F}\|_{M'} + \|\mathfrak{G}\|_{V'} + \delta\|\mathfrak{u}_{\delta}\|_{M}$. We have thus proved that
\begin{equation*}
\|\boldsymbol\Sigma_{\delta}\|_{V}  + \|\mathfrak{u}_{\delta}\|_{M} 
\lesssim
 \|\mathfrak{F}\|_{M'} + \|\mathfrak{G}\|_{V'} + \delta\|\mathfrak{u}_{\delta}\|_{M}.
\end{equation*}
Hence, the sequence $\{(\boldsymbol\Sigma_{\delta},\mathfrak{u}_{\delta})\}_{\delta > 0}$ is uniformly bounded in $\mathbf{H}$ when $\delta \to 0$.

From the uniform boundedness of the sequence $\{(\boldsymbol\Sigma_{\delta},\mathfrak{u}_{\delta})\}_{\delta > 0}$, we deduce the existence of a subsequence, still indexed by $\delta$ to simplify the notation, converging weakly in $\mathbf{H}$ to some $(\boldsymbol\Sigma,\mathfrak{u})\in K\times M$. 
We prove that $S=(\boldsymbol\Sigma,\mathfrak{u})$ solves \eqref{eq:generalized_reformulated}. 
To accomplish this goal, we first notice that \eqref{eq:generalized_reformulated_perturbed} can be rewritten as 
\begin{equation*}
S_{\delta}=(\boldsymbol\Sigma_{\delta},\mathfrak{u}_{\delta})\in K\times M: \quad 
A_{\delta}(S_{\delta}, T) - \mathfrak{a}(\boldsymbol\Sigma_{\delta},\boldsymbol\Sigma_{\delta}) - \delta(\|\boldsymbol\Sigma_{\delta}\|_{V}^{2} + \|\mathfrak{u}_{\delta}\|_{M}^{2}) \geq L(T - S_{\delta}) \quad \forall \, T\in K\times M.
\end{equation*}
Then, we use the convergences $A_{\delta}(S_{\delta},T) \to A(S,T)$ and $L(S_{\delta})\to L(S)$ as $\delta\to 0$, and the fact that $\liminf_{\delta\to 0}\mathfrak{a}(\boldsymbol\Sigma_{\delta},\boldsymbol\Sigma_{\delta}) \geq \mathfrak{a}(\boldsymbol\Sigma,\boldsymbol\Sigma)$; the latter follows from \eqref{eq:assumption_non_negativity_a}.
These arguments give $A(S, T) - \mathfrak{a}(\boldsymbol\Sigma,\boldsymbol\Sigma) \geq L(T - S)$ for all $T\in K\times M$. 
Therefore, since $A(S, T) - \mathfrak{a}(\boldsymbol\Sigma,\boldsymbol\Sigma) = A(S, T - S)$, we conclude that the limit point $(\boldsymbol\Sigma,\mathfrak{u})\in K\times M$ is a solution to \eqref{eq:generalized_reformulated}, equivalently, problem \eqref{eq:generalized_var_ineq}.

\underline{Step 2.} (uniqueness) Let us assume that there exists $S_{1}=(\boldsymbol\Sigma_{1},\mathfrak{u}_{1})$ and $S_{2}=(\boldsymbol\Sigma_{2},\mathfrak{u}_{2})$, solutions for problem \eqref{eq:generalized_reformulated}, equivalently, \eqref{eq:generalized_var_ineq}. 
We notice that $b(\boldsymbol\Sigma_{1} - \boldsymbol\Sigma_{2},\mathfrak{v}) = 0$ for all $\mathfrak{v}\in M$, i.e., $\boldsymbol\Sigma_{1} - \boldsymbol\Sigma_{2}\in W$. 
Selecting $T=(\boldsymbol\Sigma_{1},0)$ in \eqref{eq:generalized_reformulated}, then $T=(\boldsymbol\Sigma_{2},0)$ in \eqref{eq:generalized_reformulated}, and adding the obtained inequalities give
\begin{equation*}
\mathfrak{a}(\boldsymbol\Sigma_{1} - \boldsymbol\Sigma_{2}, \boldsymbol\Sigma_{1} - \boldsymbol\Sigma_{2})\leq 0.
\end{equation*} 
This, in view of assumption \eqref{eq:assumption_coercivity_a}, yields that $\|\boldsymbol\Sigma_{1} - \boldsymbol\Sigma_{2}\|_{V}=0$ and thus $\boldsymbol\Sigma_{1} = \boldsymbol\Sigma_{2}$. 

Let $\boldsymbol\Sigma = \boldsymbol\Sigma_{1} = \boldsymbol\Sigma_{2}$. To prove that $\mathfrak{u}_{1} = \mathfrak{u}_{2}$, we take $T=(\boldsymbol\Sigma\pm\widetilde{\boldsymbol\Psi},0)$ with $\widetilde{\boldsymbol\Psi}\in W$ in the problems that $S_{1} = (\boldsymbol\Sigma, \mathfrak{u}_{1})$ and $S_{2} = (\boldsymbol\Sigma,\mathfrak{u}_{2})$ solve to obtain $\mathfrak{a}(\boldsymbol\Sigma, \widetilde{\boldsymbol\Psi}) + b(\widetilde{\boldsymbol\Psi}, \mathfrak{u}_{1}) = \mathfrak{G}(\widetilde{\boldsymbol\Psi})$ and $\mathfrak{a}(\boldsymbol\Sigma, \widetilde{\boldsymbol\Psi}) + b(\widetilde{\boldsymbol\Psi}, \mathfrak{u}_{2}) = \mathfrak{G}(\widetilde{\boldsymbol\Psi})$, respectively. 
Subtracting these two equalities we obtain $\mathfrak{b}(\widetilde{\boldsymbol\Psi},\mathfrak{u}_{1} - \mathfrak{u}_{2}) = 0$ for all $\widetilde{\boldsymbol\Psi}\in W$. This, in conjunction with the inf-sup condition \eqref{eq:assumption_inf_sup_b}, allows us to conclude that $\|\mathfrak{u}_{1} - \mathfrak{u}_{2}\|_{M} = 0$, which ends the proof.
\end{proof}


\subsubsection{Discrete problem}
Let $V_{h}$ and $M_{h}$ be two finite dimensional sub-spaces of $V$ and $M$, respectively, and let $K_{h}\subset V_{h}$ be a closed and convex set not-necessarily contained in $K$ such that $0_{V}\in K_{h}$. 
The discrete approximation of problem \eqref{eq:generalized_var_ineq} reads as follows: Find $(\boldsymbol\Sigma_{h},\mathfrak{u}_{h})\in K_{h}\times M_{h}$ such that
\begin{equation}\label{eq:generalized_var_ineq_discrete}
\left\{
\begin{array}{lcll}
\mathfrak{a}(\boldsymbol\Sigma_{h}, \boldsymbol\Psi_{h} - \boldsymbol\Sigma_{h}) + \mathfrak{b}(\boldsymbol\Psi_{h} - \boldsymbol\Sigma_{h}, \mathfrak{u}_{h}) & \geq & \mathfrak{G}(\boldsymbol\Psi_{h} - \boldsymbol\Sigma_{h}) \\
\mathfrak{b}(\boldsymbol\Sigma_{h}, \mathfrak{v}_{h}) & = &  \mathfrak{F}(\mathfrak{v}_{h})
\end{array}
\right.
\end{equation}
for all $(\boldsymbol\Psi_{h},\mathfrak{v}_{h})\in K_{h}\times M_{h}$. 
As in the continuous case, we shall need some suitable assumptions in order to guarantee the well-posedness of problem \eqref{eq:generalized_var_ineq_discrete}. 
To be precise, we assume that $N_{h}:=\{\boldsymbol\Psi_{h}\in V_{h} : \mathfrak{b}(\boldsymbol\Psi_{h},\mathfrak{v}) = 0 \quad \forall \, \mathfrak{v}\in M_{h}\}$ satisfies 
\begin{equation}\label{eq:conforming_condition}
N_{h}\subset N.
\end{equation}
We further assume a discrete inf-sup condition: There exist a subspace $W_{h}\subset V_{h}$ satisfying $W_{h}\subset K_{h}$ and a constant $\tilde{\beta} > 0$ such that
\begin{equation}\label{eq:discrete_inf_sup}
\sup_{\boldsymbol\Psi_{h}\in W_{h}}\frac{\mathfrak{b}(\boldsymbol\Psi_{h},\mathfrak{v}_{h})}{\|\boldsymbol\Psi_{h}\|_{V}} \geq \tilde{\beta} \|\mathfrak{v}_{h}\|_{M} \quad \forall \, \mathfrak{v}_{h}\in M_{h}.
\end{equation}

With the previous ingredients at hand, we can argue as in the proofs of Lemma \ref{lemma:stab_estimate} and Theorem \ref{thm:wp_generalized} to conclude the well-posedness of problem \eqref{eq:generalized_var_ineq_discrete}.

\begin{theorem}[well-posedness discrete problem]\label{thm:wp_discrete_generalized}
If assumptions \eqref{eq:assumption_non_negativity_a},  \eqref{eq:assumption_coercivity_a}, \eqref{eq:conforming_condition}, and \eqref{eq:discrete_inf_sup} hold, then there exists a unique solution $(\boldsymbol\Sigma_{h},\mathfrak{u}_{h})\in K_{h}\times M_{h}$ for problem \eqref{eq:generalized_var_ineq_discrete}.
\end{theorem}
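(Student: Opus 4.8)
The plan is to mirror the continuous case, reusing verbatim the two-step structure (stability estimate plus perturbation/limit argument) carried out for Theorem~\ref{thm:wp_generalized}, with the finite-dimensional setting simplifying the second step considerably. First I would establish a discrete analogue of Lemma~\ref{lemma:stab_estimate}: assuming \eqref{eq:assumption_coercivity_a}, \eqref{eq:conforming_condition}, and the discrete inf-sup condition \eqref{eq:discrete_inf_sup}, any solution $(\boldsymbol\Sigma_h,\mathfrak{u}_h)\in K_h\times M_h$ of \eqref{eq:generalized_var_ineq_discrete} satisfies $\|\boldsymbol\Sigma_h\|_V+\|\mathfrak{u}_h\|_M\lesssim\|\mathfrak F\|_{M'}+\|\mathfrak G\|_{V'}$. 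The argument is identical to that of Lemma~\ref{lemma:stab_estimate}: use \eqref{eq:discrete_inf_sup} to pick $\widetilde{\boldsymbol\Sigma}_h\in W_h$ with $\mathfrak b(\widetilde{\boldsymbol\Sigma}_h,\mathfrak v_h)=\mathfrak F(\mathfrak v_h)$ for all $\mathfrak v_h\in M_h$ and $\|\widetilde{\boldsymbol\Sigma}_h\|_V\lesssim\|\mathfrak F\|_{M'}$; then $\widetilde{\boldsymbol\Sigma}_h-\boldsymbol\Sigma_h\in N_h\subset N$ by \eqref{eq:conforming_condition}, so coercivity \eqref{eq:assumption_coercivity_a} applies on $N$; testing \eqref{eq:generalized_var_ineq_discrete} with $(\boldsymbol\Psi_h,\mathfrak v_h)=(\widetilde{\boldsymbol\Sigma}_h,0)$ bounds $\|\widetilde{\boldsymbol\Sigma}_h-\boldsymbol\Sigma_h\|_V$, and finally testing with $\boldsymbol\Psi_h=\boldsymbol\Sigma_h\pm\widetilde{\boldsymbol\Psi}_h$ ($\widetilde{\boldsymbol\Psi}_h\in W_h\subset K_h$) plus \eqref{eq:discrete_inf_sup} bounds $\|\mathfrak{u}_h\|_M$. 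Here the crucial point is that \eqref{eq:conforming_condition} is exactly what makes the continuous coercivity on $N$ usable for discrete elements of $N_h$, since $V_h\not\subset N$ in general and coercivity is not assumed on all of $V$.

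Next I would prove existence. As in Theorem~\ref{thm:wp_generalized}, reformulate \eqref{eq:generalized_var_ineq_discrete} as a variational inequality on $\mathbf H_h:=V_h\times M_h$ with the bilinear form $A$ and linear form $L$ restricted to $\mathbf H_h\times\mathbf H_h$ and $\mathbf H_h$, and the convex set $K_h\times M_h$. Since $\mathbf H_h$ is finite-dimensional, one can again introduce the perturbed form $A_\delta=A+\delta(\cdot,\cdot)_V+\delta(\cdot,\cdot)_M$, which is coercive on $\mathbf H_h$ by \eqref{eq:assumption_non_negativity_a}, apply Stampacchia's theorem to obtain $(\boldsymbol\Sigma_{h,\delta},\mathfrak{u}_{h,\delta})$, and pass to the limit $\delta\to0$ using the $\delta$-uniform bound from the discrete stability estimate together with weak (here just sequential) compactness of bounded sets in the finite-dimensional space $\mathbf H_h$ and the closedness of $K_h$. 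The $\liminf$ inequality $\liminf_{\delta\to0}\mathfrak a(\boldsymbol\Sigma_{h,\delta},\boldsymbol\Sigma_{h,\delta})\geq\mathfrak a(\boldsymbol\Sigma_h,\boldsymbol\Sigma_h)$ still follows from nonnegativity \eqref{eq:assumption_non_negativity_a}. Alternatively, in finite dimensions one could skip the perturbation and directly cite Stampacchia/Lions--Stampacchia on $K_h\times M_h$ after noting $A$ is merely positive semidefinite, but it is cleaner to replicate the published argument.

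Finally, uniqueness proceeds exactly as in Step~2 of Theorem~\ref{thm:wp_generalized}: given two solutions $(\boldsymbol\Sigma_h^1,\mathfrak u_h^1)$, $(\boldsymbol\Sigma_h^2,\mathfrak u_h^2)$, their difference satisfies $\mathfrak b(\boldsymbol\Sigma_h^1-\boldsymbol\Sigma_h^2,\mathfrak v_h)=0$ for all $\mathfrak v_h\in M_h$, hence lies in $N_h\subset N$; adding the two inequalities obtained by testing with $\boldsymbol\Psi_h=\boldsymbol\Sigma_h^2$ and $\boldsymbol\Psi_h=\boldsymbol\Sigma_h^1$ respectively gives $\mathfrak a(\boldsymbol\Sigma_h^1-\boldsymbol\Sigma_h^2,\boldsymbol\Sigma_h^1-\boldsymbol\Sigma_h^2)\leq0$, and coercivity \eqref{eq:assumption_coercivity_a} on $N$ forces $\boldsymbol\Sigma_h^1=\boldsymbol\Sigma_h^2$; then testing the two component equations with $\boldsymbol\Psi_h=\boldsymbol\Sigma_h\pm\widetilde{\boldsymbol\Psi}_h$, $\widetilde{\boldsymbol\Psi}_h\in W_h$, and subtracting yields $\mathfrak b(\widetilde{\boldsymbol\Psi}_h,\mathfrak u_h^1-\mathfrak u_h^2)=0$, so \eqref{eq:discrete_inf_sup} gives $\mathfrak u_h^1=\mathfrak u_h^2$. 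Since every ingredient is a literal transcription of an already-proven step, the only thing that really needs care — and hence the main obstacle, though a mild one — is checking that assumption \eqref{eq:conforming_condition} correctly substitutes for the role played by $\widetilde{\boldsymbol\Sigma}-\boldsymbol\Sigma\in N$ in the continuous proof, i.e.\ that $N_h\subset N$ is invoked at precisely the points where coercivity is needed; with that observation the proof is essentially a remark, so I would likely present it as a brief sketch pointing to Lemma~\ref{lemma:stab_estimate} and Theorem~\ref{thm:wp_generalized} rather than writing it out in full.
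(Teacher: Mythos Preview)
Your proposal is correct and matches the paper's approach exactly: the paper simply remarks that one argues as in the proofs of Lemma~\ref{lemma:stab_estimate} and Theorem~\ref{thm:wp_generalized}, which is precisely the mirroring you describe, with assumption~\eqref{eq:conforming_condition} playing the role you identify. Your concluding instinct to present it as a brief sketch pointing to those earlier results is also what the paper does.
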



\subsection{Regularized maximum function}
While $\max\{f,g\}\in H^k(\Omega)$ for $f,g\in H^k(\Omega)$ is true for $k=1$, it does not hold for $k\geq 2$ in general. 
This poses challenges for the analysis, in particular the a posteriori error analysis, for the plate obstacle problem.
Therefore, we consider a regularized maximum function derived in~\cite[eq. (2.11)]{MR0579403}: Let $\varepsilon>0$ be given and define for $x\in\Omega$
\begin{equation*}
  M_{\varepsilon}\{f,g\}(x):=
\begin{cases}
  \qquad \qquad \qquad \quad f(x) &\text{ if } ~  f(x)-g(x)>\varepsilon, \\
\frac{1}{4\varepsilon}(f(x) - g(x))^{2} + \frac{1}{2}(f(x) + g(x)) + \frac{\varepsilon}{4} &\text{ if } ~ |f(x) - g(x)|\leq \varepsilon,\\
\qquad \qquad \qquad \quad g(x) &\text{ if } ~ f(x)-g(x)<-\varepsilon. \\
\end{cases}
\end{equation*}
We stress that $M_{\varepsilon}\{f,g\} \geq \max\{f,g\}$ and $\lim_{\varepsilon\to 0} M_{\varepsilon}\{f,g\} = \max\{f,g\}$ (pointwise), cf.~\cite[Section~2]{MR0579403}.
Furthermore, $M_{\varepsilon}\{f,g\}= M_{\varepsilon}\{g,f\}$ and $M_{\varepsilon}\{f,g\} = g + M_{\varepsilon}\{f-g,0\}$ as is verified by straightforward calculations.
To prove $H^2(\Omega)$ regularity of $M_{\varepsilon}\{f,g\}$ under certain assumptions on $f$ and $g$ we use the next result. We denote by $C^{k,1}(\R)$ the space of $k$ times differentiable functions such that the $k$-th derivative is Lipschitz continuous with domain $\R$.

\begin{lemma}\label{lem:chainrule}
  Suppose $F\in C^{1,1}(\R)$ and $v\in H^2(\Omega)\cap C^1(\overline\Omega)$. Then, $F\circ v \in H^2(\Omega)\cap C^1(\overline\Omega)$.
\end{lemma}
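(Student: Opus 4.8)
The plan is to verify that $F\circ v$ lies in both $C^1(\overline\Omega)$ and $H^2(\Omega)$ separately, using that $v$ already has both properties and that $F$ is $C^{1,1}$. First I would establish the $C^1$-regularity: since $v\in C^1(\overline\Omega)$ and $F\in C^1(\R)\subset C^{1,1}(\R)$, the composition $F\circ v$ is continuously differentiable on $\overline\Omega$ by the classical chain rule, with $\nabla(F\circ v) = (F'\circ v)\,\nabla v$; here $F'\circ v$ is continuous on $\overline\Omega$ because $F'$ is continuous and $v$ is continuous, and $\nabla v$ is continuous by assumption, so the product is in $[C^0(\overline\Omega)]^n$.

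Next I would address the $H^2$-regularity. The first step is to note $F\circ v\in L^2(\Omega)$ (it is continuous on a bounded domain, hence bounded) and that $(F'\circ v)\,\nabla v\in [L^\infty(\Omega)]^n\subset [L^2(\Omega)]^n$, and that this is the weak gradient of $F\circ v$ (which follows either from the pointwise chain rule just established or from the standard Sobolev chain rule for Lipschitz $F$, e.g.\ Stampacchia's theorem, using $F'\in L^\infty$ since $F$ is Lipschitz — note $F\in C^{1,1}$ gives $F'$ Lipschitz hence $F''\in L^\infty$, and $F'$ locally Lipschitz hence $F$ locally Lipschitz, and on the compact range of $v$ it is Lipschitz). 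So it remains to show each component $(F'\circ v)\,\partial_i v$ belongs to $H^1(\Omega)$. For this I would apply the Sobolev chain/product rule: $F'$ is Lipschitz on the compact set $v(\overline\Omega)$ and $v\in H^2(\Omega)\cap C^1$, so $F'\circ v\in H^1(\Omega)\cap C^0(\overline\Omega)$ with $\nabla(F'\circ v) = (F''\circ v)\,\nabla v$ a.e., where $F''\in L^\infty(\R)$ because $F'$ is globally Lipschitz. Moreover $F'\circ v\in L^\infty(\Omega)$. Since also $\partial_i v\in H^1(\Omega)\cap C^0(\overline\Omega)\cap L^\infty(\Omega)$ (as $v\in H^2$ and $v\in C^1(\overline\Omega)$), the product of two such $H^1\cap L^\infty$ functions is again in $H^1(\Omega)$, with weak derivatives given by the Leibniz rule
\begin{equation*}
  \partial_j\bigl((F'\circ v)\,\partial_i v\bigr) = (F''\circ v)\,(\partial_j v)(\partial_i v) + (F'\circ v)\,\partial_j\partial_i v,
\end{equation*}
and the right-hand side is in $L^2(\Omega)$ because $F''\circ v,\ \partial_i v,\ \partial_j v\in L^\infty(\Omega)$ and $\partial_j\partial_i v\in L^2(\Omega)$. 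Hence $F\circ v\in H^2(\Omega)$, completing the proof together with the $C^1$ part.

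The main obstacle is the careful justification that the naive pointwise chain rule genuinely yields \emph{weak} second derivatives in $L^2$, i.e.\ verifying the Sobolev chain rule $\nabla(F'\circ v)=(F''\circ v)\nabla v$ when $F'$ is merely Lipschitz (not $C^1$) — one must invoke the standard result on composition of a Lipschitz function with an $H^1$ function (often attributed to Stampacchia), and check that the a.e.-defined derivative $F''$ (existing by Rademacher) does the right thing; the continuity of $v$ helps here since it confines $v(x)$ to a compact interval on which $F'$ is Lipschitz with a uniform constant. The remaining ingredient, the product rule for two $H^1\cap L^\infty$ functions, is completely standard and I would only cite it. One should also remark that the hypothesis $v\in C^1(\overline\Omega)$ is used to get $\nabla v\in L^\infty$, which is what makes all the products above land in $L^2$ rather than merely $L^1$.
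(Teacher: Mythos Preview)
Your proposal is correct and follows essentially the same route as the paper: classical chain rule for the $C^1$ part, Stampacchia's chain rule to get $F'\circ v\in H^1(\Omega)$, and then the product rule for $H^1\cap L^\infty$ (the paper writes $H^1\cap C^0$) functions to conclude $\nabla(F\circ v)\in [H^1(\Omega)]^n$. The only slip is the reversed inclusion ``$F\in C^1(\R)\subset C^{1,1}(\R)$'' in your first paragraph---you mean $C^{1,1}(\R)\subset C^1(\R)$---but this is a typo and does not affect the argument.
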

\begin{proof}
  By the chain rule we get $F\circ v \in H^1(\Omega)\cap C^0(\overline\Omega)$ and
  \begin{align*}
    \nabla(F\circ v) = (F'\circ v)\nabla v.
  \end{align*}
  Note that $F'\in C^{0,1}(\R)$. Thus, by the Stampacchia chain rule theorem we have $F'\circ v\in H^1(\Omega)$. 
  We conclude that $F'\circ v\in H^1(\Omega)\cap C^0(\overline\Omega)$. 
  Since $\nabla v\in [H^1(\Omega)\cap C^0(\overline\Omega)]^n$ by assumption, the product rule then shows that $\nabla(F\circ v) \in H^1(\Omega)^n$.
  This finishes the proof.
\end{proof}

\begin{proposition}\label{prop:max_h02}
  Let $\varepsilon>0$. 
  If $u,v\in H^2(\Omega)\cap C^1(\overline\Omega)$, then $M_\varepsilon\{u,v\} \in H^2(\Omega)\cap C^1(\overline\Omega)$.
  
  If additionally $u\in H_0^2(\Omega)$ and $v|_{\partial\Omega}<0$ then there exists $\varepsilon_0>0$ such that $M_{\varepsilon}\{u,v\}\in H_0^{2}(\Omega)$ for all $0<\varepsilon<\varepsilon_0$.
\end{proposition}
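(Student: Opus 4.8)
The plan is to reduce both assertions to the scalar function $F_\varepsilon\colon\R\to\R$, $F_\varepsilon(t):=M_\varepsilon\{t,0\}$, via the identity $M_\varepsilon\{u,v\}=v+M_\varepsilon\{u-v,0\}=v+F_\varepsilon\circ(u-v)$ already recorded above.

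First I would verify that $F_\varepsilon\in C^{1,1}(\R)$. This is a direct computation: $F_\varepsilon(t)=0$ for $t<-\varepsilon$, $F_\varepsilon(t)=\tfrac{1}{4\varepsilon}t^{2}+\tfrac12 t+\tfrac\varepsilon4$ for $|t|\le\varepsilon$, and $F_\varepsilon(t)=t$ for $t>\varepsilon$; evaluating the quadratic branch and its derivative $t\mapsto\tfrac{1}{2\varepsilon}t+\tfrac12$ at $t=\pm\varepsilon$ shows $F_\varepsilon$ and $F_\varepsilon'$ match across both interfaces, so $F_\varepsilon\in C^{1}(\R)$, and $F_\varepsilon'$ is piecewise affine with slopes in $\{0,\tfrac{1}{2\varepsilon}\}$, hence globally Lipschitz. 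Since $u-v\in H^{2}(\Omega)\cap C^{1}(\overline\Omega)$, Lemma~\ref{lem:chainrule} yields $F_\varepsilon\circ(u-v)\in H^{2}(\Omega)\cap C^{1}(\overline\Omega)$, and adding $v\in H^{2}(\Omega)\cap C^{1}(\overline\Omega)$ gives the first claim.

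For the second claim, the idea is that $M_\varepsilon\{u,v\}$ agrees with $u$ in a one-sided neighborhood of $\partial\Omega$ once $\varepsilon$ is small. As $v\in C^{0}(\overline\Omega)$ with $v|_{\partial\Omega}<0$ on the compact set $\partial\Omega$, the number $\delta:=-\max_{\partial\Omega}v$ is positive; as $u\in C^{1}(\overline\Omega)\cap H_0^{2}(\Omega)$ we have $u=0$ on $\partial\Omega$, so $u-v\ge\delta$ there, and by continuity of $u-v$ there is a relatively open $U\subset\overline\Omega$ with $\partial\Omega\subset U$ and $u-v>\delta/2$ on $U$. Put $\varepsilon_0:=\delta/2$. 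For $0<\varepsilon<\varepsilon_0$ we then have $u-v>\varepsilon$ on $U\cap\Omega$, so by the first branch in the definition of $M_\varepsilon$ (applied to $F_\varepsilon$) we get $F_\varepsilon\circ(u-v)=u-v$ and hence $M_\varepsilon\{u,v\}=u$ on $U\cap\Omega$.

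Finally I would upgrade this local coincidence to $H_0^{2}(\Omega)$-membership. The function $w:=M_\varepsilon\{u,v\}-u$ lies in $H^{2}(\Omega)$ by the first part and vanishes on $U\cap\Omega$, so $\{w\neq0\}\subset\overline\Omega\setminus U$, which is a compact subset of $\Omega$ because $U$ is relatively open and contains $\partial\Omega$. An $H^{2}(\Omega)$-function with compact support in $\Omega$ belongs to $H_0^{2}(\Omega)$ (multiply by a fixed $C_0^{\infty}(\Omega)$ cut-off equal to $1$ near its support and mollify), so $w\in H_0^{2}(\Omega)$ and therefore $M_\varepsilon\{u,v\}=u+w\in H_0^{2}(\Omega)$. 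The only genuinely delicate point is this second part: rather than trying to compute the traces of $M_\varepsilon\{u,v\}$ and $\nabla M_\varepsilon\{u,v\}$ directly, one exploits the boundary gap between $u$ and $v$ to write $M_\varepsilon\{u,v\}$ as $u$ plus a compactly supported $H^{2}$-perturbation; the $C^{1,1}$ bookkeeping for $F_\varepsilon$ and the chain rule are routine given Lemma~\ref{lem:chainrule}.
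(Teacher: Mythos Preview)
Your proof is correct and follows essentially the same route as the paper: reduce to $F_\varepsilon\in C^{1,1}(\R)$, apply Lemma~\ref{lem:chainrule} to $F_\varepsilon\circ(u-v)$, and for the second part use the boundary gap $u-v>0$ on $\partial\Omega$ to show $M_\varepsilon\{u,v\}=u$ in a one-sided neighborhood of $\partial\Omega$. The only minor difference is the last step: the paper reads off $M_\varepsilon\{u,v\}|_{\partial\Omega}=0$ and $\nabla M_\varepsilon\{u,v\}|_{\partial\Omega}=0$ directly from this coincidence, whereas you write $M_\varepsilon\{u,v\}=u+w$ with $w\in H^2(\Omega)$ compactly supported in $\Omega$ and hence $w\in H_0^2(\Omega)$; both conclusions are valid and equivalent here.
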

\begin{proof}
  Define
  \begin{align*}
    F_\varepsilon(t) = \begin{cases} 
         \qquad \quad t & \text{ if } t>\varepsilon, \\
      \frac{1}{4\varepsilon} t^2 + \frac12t + \frac\varepsilon4 & \text{if } |t|\leq \varepsilon, \\
         \qquad \quad  0 & \text{ if } t<-\varepsilon.
    \end{cases}
  \end{align*}
  Then, $F_\varepsilon\in C^{1,1}(\R)$. For the first assertion, let $u,v\in H^2(\Omega)\cap C^1(\overline\Omega)$ be given. Note that
  \begin{align*}
    M_\varepsilon\{u,v\} = v + F_\varepsilon(u-v).
  \end{align*}
  By Lemma~\ref{lem:chainrule}, $F_\varepsilon(u-v) \in H^2(\Omega)\cap C^1(\overline\Omega)$, and, consequently, $M_\varepsilon\{u,v\}\in H^2(\Omega)\cap C^1(\overline\Omega)$.

  Since $v|_{\partial\Omega}<0$ and $u|_{\partial\Omega} = 0$, there exists $\varepsilon_0>0$ and $\delta>0$, both sufficiently small, such that $u(x)-g(x)>\varepsilon_0$ for all $x\in\Omega_\delta = \{x\in\Omega\,:\,\mathrm{dist}(x,\partial\Omega)<\delta\}$. Therefore, $M_\varepsilon\{u,v\}(x) = u(x)$ for $x\in \Omega_\delta$ and all $\varepsilon < \varepsilon_{0}$. We thus conclude that $M_\varepsilon\{u,v\}|_{\partial\Omega} = 0$ as well as $\nabla M_\varepsilon\{u,v\}|_{\partial\Omega} = 0$, which finishes the proof.
  \end{proof}


\section{Mixed FEM for the membrane obstacle problem}\label{sec:obst_problem}
Let $\Omega\subset \mathbb{R}^{n}$ ($n = 2,3$) be an open and bounded domain with Lipschitz boundary $\partial\Omega$. Let $f\in L^2(\Omega)$ and $g\in H^{1}(\Omega)\cap C(\overline{\Omega})$ such that $g \leq 0$ on $\partial\Omega$. 
The obstacle problem then reads as follows: Find $u$ such that
\begin{equation}\label{eq:obstacle_problem}
-\Delta u \geq f \text{ in }\Omega, 
\quad u \geq g \text{ in }\Omega,
\quad (-\Delta u - f)(u - g) = 0 \text{ in }\Omega,
\quad u = 0 \text{ on }\partial\Omega.
\end{equation}
Let us introduce the set $\mathsf{V}:=\{v\in H_0^{1}(\Omega) : v \geq g \text{ a.e. in }\Omega\}$. 
Problem \eqref{eq:obstacle_problem} admits a unique solution $u\in \mathsf{V}$ and it can be equivalently characterized by the variational inequality: 
Find $u\in \mathsf{V}$ such that
\begin{equation}\label{eq:weakformObstacle}
\ip{\nabla u}{\nabla (v - u)}_{\Omega} \geq  \ip{f}{v - u}_{\Omega} \quad \forall v\in \mathsf{V};
\end{equation}
see \cite{MR1786735}.


\subsection{Mixed variational formulation}\label{sec:mixed_var_for}

Let $u\in \mathsf{V}$ denote the unique solution to the obstacle problem \eqref{eq:obstacle_problem}. 
Define $\lambda:=-\Delta u - f\in H^{-1}(\Omega)$ and $\boldsymbol\sigma:=\nabla u$. 
We can thus rewrite problem \eqref{eq:obstacle_problem} as the following first-order system: 
\begin{equation}\label{eq:strong_mixed_obst}
\left\{
\begin{array}{rcll}
-\text{div}\,\boldsymbol\sigma - \lambda & = & f &\text{ in } \Omega, \\
\boldsymbol\sigma - \nabla u & = & 0 &\text{ in }\Omega,\\
\lambda & \geq & 0 &\text{ in }\Omega, \\
u &\geq& g &\text{ in }\Omega,\\
\lambda(u - g) & = & 0 &\text{ in }\Omega,\\
u & = & 0 &\text{ on }\partial\Omega.
\end{array}
\right.
\end{equation}
We note that $\lambda$ defines a nonnegative Radon measure on $\Omega$; see section \ref{sec:notation}. Moreover, $\mathrm{supp}(\lambda)\subseteq \{x\in \Omega : u(x) = g(x)\}$.

Let $M:=L^2(\Omega)$, $V := V_{1}$, and $K:=K_{1}$, where $V_{1}$ and $K_{1}$ are defined in \eqref{def:space_V} and \eqref{def:cone_K}, respectively.
We introduce the linear functions
\begin{equation*}
F:M\to \mathbb{R}, \quad F(v):= -\ip{f}{v}_{\Omega}, 
\quad \text{and} \quad
G:K \to \mathbb{R}, \quad G((\boldsymbol\tau,\mu)):= \int_{\Omega}g\, \mathrm{d}\mu.
\end{equation*}
We note that $\int_{\Omega}g\, \mathrm{d}\mu < \infty$. 
In fact, since $\mu \geq 0$ and $g|_{\partial\Omega} \leq0$, we have that 
\begin{align*}
\int_{\Omega}g\, \mathrm{d}\mu \leq \int_{\Omega}\max\{g,0\}\mathrm{d}\mu = \langle \mu, \max\{g,0\}\rangle \leq \|\mu\|_{-1}\|\max\{g,0\}\|_{1}.
\end{align*}
Define the continuous bilinear forms $a : V \times V \to \mathbb{R}$ and $b : V \times M \to \mathbb{R}$  by
\begin{equation*}
a((\boldsymbol\tau_{1},\mu_{1}), (\boldsymbol\tau_{2},\mu_{2})) := \ip{\boldsymbol\tau_{1}}{\boldsymbol\tau_{2}}_{\Omega},\qquad
b((\boldsymbol\tau,\mu), v)  :=  \ip{\text{div}\,\boldsymbol\tau + \mu}{v}_{\Omega}.
\end{equation*}

With all the previous ingredients at hand, we derive a mixed formulation for problem \eqref{eq:strong_mixed_obst}.
First, test the second equation with $\btau - \bsigma$ with  $\btau \in [L^2(\Omega)]^n$ and use the distributional definition of $\div$ to obtain
\begin{equation}\label{eq:identity_2nd_eq}
  \ip{\bsigma}{\btau - \bsigma}_\Omega + \dual{\div(\btau - \bsigma)}{u} = 0. 
\end{equation}
Second, condition $u\geq g$ a.e. in $\Omega$ and $\lambda(u-g) = 0$ yield $\int_{\Omega}(u-g)\mathrm{d}(\mu-\lambda)\geq 0$ for all $\mu\in H^{-1}(\Omega)$ with $\mu\geq 0$. Combining this inequality with identity \eqref{eq:identity_2nd_eq} we obtain
\begin{align*}
  \ip{\bsigma}{\btau - \bsigma}_\Omega + \dual{\div(\btau-\bsigma)+\mu-\lambda}{u} \geq  \int_{\Omega}{g}\,\mathrm{d}(\mu-\lambda) \quad\text{for all } \btau\in [L^2(\Omega)]^n, \, \mu\in H^{-1}(\Omega), \, \mu\geq 0. 
\end{align*}
Restricting the test functions to the convex cone $K$, we get that $\dual{\div(\btau-\bsigma)+\mu-\lambda}{u} = \ip{\div(\btau-\bsigma)+\mu-\lambda}{u}_\Omega$.
Finally, testing the first equation in~\eqref{eq:strong_mixed_obst} with $v\in M$, we end up with the following mixed variational formulation: 
Find $(\boldsymbol\sigma,\lambda,u)\in K\times M$ such that
\begin{equation}\label{eq:weak_mixed}
\left\{
\begin{array}{lcl}
a((\boldsymbol\sigma,\lambda), (\boldsymbol\tau,\mu) - (\boldsymbol\sigma, \lambda)) + b((\boldsymbol\tau,\mu) - (\boldsymbol\sigma, \lambda), u) & \geq & G((\boldsymbol\tau,\mu) - (\boldsymbol\sigma, \lambda))\\
b((\boldsymbol\sigma, \lambda), v) & = &  F(v) 
\end{array}
\right.
\end{equation}
for all $(\boldsymbol\tau,\mu,v)\in K\times M$. In what follows we study the well-posedness of \eqref{eq:weak_mixed}. 

\begin{proposition}[equivalence]\label{prop:equivalent_obstacle}
  Problems~\eqref{eq:weakformObstacle} and~\eqref{eq:weak_mixed} are equivalent in the following sense: 
  If $u\in \mathsf{V}$ denotes the unique solution of~\eqref{eq:weakformObstacle}, then $(\nabla u,-\Delta u-f,u)\in K\times M$ solves~\eqref{eq:weak_mixed}. 
   Conversely, if $(\bsigma,\lambda,u)\in K\times M$ solves~\eqref{eq:weak_mixed}, then $u\in \mathsf{V}$ and solves~\eqref{eq:weakformObstacle}. Moreover, $\bsigma = \nabla u$, and $\lambda=-\Delta u-f$. In particular, formulation~\eqref{eq:weak_mixed} admits a unique solution.
\end{proposition}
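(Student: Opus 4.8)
The plan is to prove the equivalence by establishing both implications directly, and then obtaining uniqueness as a corollary of the equivalence together with the known well-posedness of the classical obstacle problem~\eqref{eq:weakformObstacle}.

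\textbf{Forward implication.} Let $u\in\mathsf{V}$ solve~\eqref{eq:weakformObstacle}. I would first set $\bsigma:=\nabla u\in[L^2(\Omega)]^n$ and $\lambda:=-\Delta u-f\in H^{-1}(\Omega)$. One checks that $\dual{\lambda}{v}=\ip{\nabla u}{\nabla v}_\Omega-\ip fv_\Omega$ for all $v\in H_0^1(\Omega)$, so in particular $\div\bsigma+\lambda=-f\in L^2(\Omega)$, whence $(\bsigma,\lambda)\in V_1$; taking $v\geq 0$ in~\eqref{eq:weakformObstacle} (recall $0\in\mathsf V$, so $v+u\in\mathsf V$ for $v\ge 0$, actually more carefully $u+v\ge u\ge g$) gives $\dual\lambda v\ge 0$, i.e.\ $\lambda\ge 0$, so $(\bsigma,\lambda)\in K$. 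The second equation of~\eqref{eq:weak_mixed} is just the restated identity $\div\bsigma+\lambda=-f$ tested against $v\in M=L^2(\Omega)$, i.e.\ $b((\bsigma,\lambda),v)=\ip{-f}v_\Omega=F(v)$. For the first (inequality) line, I would retrace the derivation already given in the text in reverse: since $\bsigma=\nabla u$, the distributional definition of $\div$ gives $\ip{\bsigma}{\btau-\bsigma}_\Omega+\dual{\div(\btau-\bsigma)}u=0$; adding $\dual{\mu-\lambda}u$ to both sides and using the complementarity $\int_\Omega(u-g)\,\mathrm d\lambda=0$ (valid because $\mathrm{supp}(\lambda)\subseteq\{u=g\}$) together with $u\ge g$ and $\mu\ge 0$ yields $\dual{\mu-\lambda}u\ge\dual{\mu-\lambda}g=\int_\Omega g\,\mathrm d(\mu-\lambda)$. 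Since $(\btau,\mu),(\bsigma,\lambda)\in K$, the quantity $\div(\btau-\bsigma)+\mu-\lambda$ lies in $L^2(\Omega)$ and the duality becomes an $L^2$ inner product, so the first line of~\eqref{eq:weak_mixed} holds with $G$ as defined. The one subtlety here is justifying the complementarity condition $\int_\Omega(u-g)\,\mathrm d\lambda=0$ at the level of the measure $\lambda$; I would note $\lambda\ge 0$, $u-g\ge 0$ so $\int_\Omega(u-g)\,\mathrm d\lambda\ge 0$, while choosing $v=g$ (allowed since $g\le 0=u$ on $\partial\Omega$ forces $g\in\mathsf V$? — actually $g\in\mathsf V$ iff $g\ge g$ a.e.\ and $g\in H_0^1$; here one uses instead $v=u-(u-g)_{+}\wedge$-type truncations or simply $v=g^{+}\wedge\cdots$) — more robustly, test~\eqref{eq:weakformObstacle} with $v=u$ against the definition of $\lambda$ to get $\dual\lambda{u-g}\le 0$; combined with $\ge 0$ this gives $\int_\Omega(u-g)\,\mathrm d\lambda=0$.

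\textbf{Converse implication.} Let $(\bsigma,\lambda,u)\in K\times M$ solve~\eqref{eq:weak_mixed}. I would first recover $\bsigma=\nabla u$ and $u\in H_0^1(\Omega)$: take $\mu=\lambda$ and $\btau=\bsigma\pm\boldsymbol\phi$ with $\boldsymbol\phi\in[C_0^\infty(\Omega)]^n$ in the first line of~\eqref{eq:weak_mixed}; the $\pm$ trick turns the inequality into the equality $\ip\bsigma{\boldsymbol\phi}_\Omega+\dual{\div\boldsymbol\phi}u=0$, i.e.\ $\ip{\bsigma-\nabla u}{\boldsymbol\phi}_\Omega=0$ in the distributional sense, which shows $\nabla u=\bsigma\in[L^2(\Omega)]^n$; since also $u\in L^2(\Omega)$ with $L^2$ gradient and the test functions in $V_1$ include smooth ones vanishing suitably, one gets $u\in H_0^1(\Omega)$ (here I would lean on the density lemma and the structure of $V_1$, mimicking a standard argument that the first line of~\eqref{eq:weak_mixed} forces $u$ to have zero trace). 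Next, the second line $b((\bsigma,\lambda),v)=F(v)$ for all $v\in L^2(\Omega)$ gives $\div\bsigma+\lambda=-f$ in $L^2(\Omega)$, i.e.\ $\lambda=-\Delta u-f$. Then I would establish $u\ge g$ a.e.: fix $\mu\ge 0$ arbitrary and $\btau=\bsigma$, so $\div(\btau-\bsigma)+\mu-\lambda=\mu-\lambda\in L^2$ (using $\lambda=-\div\bsigma-f\in L^2$), and the first line reads $\ip{\mu-\lambda}u_\Omega\ge\int_\Omega g\,\mathrm d(\mu-\lambda)$; rearranging and letting $\mu$ range over $\ge 0$ functionals, the positively-homogeneous part forces $\dual\mu{u-g}\ge 0$ for all $\mu\ge 0$ in $H^{-1}(\Omega)$, hence $u-g\ge 0$ a.e.\ in $\Omega$, so $u\in\mathsf V$. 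Taking $\mu=0$ in the same relation gives the reverse inequality $-\dual\lambda{u-g}\le 0$, i.e.\ complementarity $\int_\Omega(u-g)\,\mathrm d\lambda=0$. Finally, to recover~\eqref{eq:weakformObstacle}: for $v\in\mathsf V$ write $\dual\lambda{v-u}=\dual\lambda{v-g}-\dual\lambda{u-g}=\dual\lambda{v-g}\ge 0$ since $\lambda\ge 0$ and $v\ge g$; hence $\ip{\nabla u}{\nabla(v-u)}_\Omega-\ip f{v-u}_\Omega=\dual{-\Delta u-f}{v-u}=\dual\lambda{v-u}\ge 0$, which is exactly~\eqref{eq:weakformObstacle}. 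The identities $\bsigma=\nabla u$ and $\lambda=-\Delta u-f$ were established along the way.

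\textbf{Uniqueness.} Problem~\eqref{eq:weakformObstacle} has a unique solution $u$ (standard Lions--Stampacchia theory, cited in the text). By the forward implication any solution of~\eqref{eq:weak_mixed} has the form $(\nabla u',-\Delta u'-f,u')$ with $u'$ solving~\eqref{eq:weakformObstacle} — wait, more precisely: given a solution $(\bsigma,\lambda,u)$ of~\eqref{eq:weak_mixed}, the converse implication shows $u$ solves~\eqref{eq:weakformObstacle}, hence $u$ is unique, and then $\bsigma=\nabla u$, $\lambda=-\Delta u-f$ are determined by $u$; so~\eqref{eq:weak_mixed} has at most one solution, and the forward implication produces one, giving existence and uniqueness.

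\textbf{Main obstacle.} The delicate points are (i) justifying, in the converse direction, that the inequality line of~\eqref{eq:weak_mixed} genuinely forces $u\in H_0^1(\Omega)$ with $\nabla u=\bsigma$ — one must argue carefully with the cone $K$ and the available test functions rather than a plain $\pm$-argument on a subspace, using the density lemma to cover all of $[L^2(\Omega)]^n$ and a boundary argument for the trace; and (ii) the measure-theoretic handling of $\int_\Omega(u-g)\,\mathrm d\lambda$ and of the condition $\lambda\ge 0$ as a Radon measure, i.e.\ translating between the $H^{-1}$ duality pairing and integration against the measure, which is exactly what the notation set up in section~\ref{sec:notation} is for. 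Everything else is bookkeeping with the definitions of $a$, $b$, $F$, $G$, $K$, and $V_1$.
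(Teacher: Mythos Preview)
Your overall strategy matches the paper's, and the forward implication and the uniqueness argument are fine. However, the converse implication contains a genuine gap.

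In the step where you establish $u\geq g$, you set $\btau=\bsigma$ and let $\mu\geq 0$ be arbitrary in $H^{-1}(\Omega)$. For $(\bsigma,\mu)$ to lie in $K$ you need $\div\bsigma+\mu\in L^2(\Omega)$. You justify this by the parenthetical ``using $\lambda=-\div\bsigma-f\in L^2$'', but this is incorrect: from the second line of~\eqref{eq:weak_mixed} you only know that the \emph{sum} $\div\bsigma+\lambda=-f$ lies in $L^2(\Omega)$, not that $\div\bsigma$ or $\lambda$ does individually. In general $\lambda$ is only in $H^{-1}(\Omega)$ (it becomes $L^2$ only under additional regularity, e.g.\ convex $\Omega$ and $g\in H^2$). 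Hence $(\bsigma,\mu)\notin K$ for general $\mu\geq 0$, and your test function is illegal. The paper's fix is simple but essential: given arbitrary $\mu\geq 0$ in $H^{-1}(\Omega)$, use surjectivity of $\div\colon[L^2(\Omega)]^n\to H^{-1}(\Omega)$ to pick $\btau\in[L^2(\Omega)]^n$ with $\div\btau+\mu\in L^2(\Omega)$, so $(\btau,\mu)\in K$. Since you have already shown $\bsigma=\nabla u$, the identity $\ip{\bsigma}{\btau-\bsigma}_\Omega+\dual{\div(\btau-\bsigma)}{u}=0$ holds regardless of $\btau$, and the first line of~\eqref{eq:weak_mixed} reduces to $\int_\Omega(u-g)\,\mathrm d(\mu-\lambda)\geq 0$ for all $\mu\geq 0$, from which $u\geq g$ and complementarity follow.

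A smaller point concerns recovering $u\in H_0^1(\Omega)$. Rather than testing with $\boldsymbol\phi\in[C_0^\infty(\Omega)]^n$ and then appealing to a separate boundary argument, observe that $(\bsigma\pm\tilde\btau,\lambda)\in K$ for \emph{every} $\tilde\btau\in\Hdivset\Omega$ (since $\div(\bsigma\pm\tilde\btau)+\lambda=-f\pm\div\tilde\btau\in L^2$). The $\pm$-trick then yields directly $\ip{\bsigma}{\tilde\btau}_\Omega+\ip{\div\tilde\btau}{u}_\Omega=0$ for all $\tilde\btau\in\Hdivset\Omega$, which is precisely the standard characterization of $u\in H_0^1(\Omega)$ with $\nabla u=\bsigma$; no density or trace argument is needed. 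This is how the paper proceeds, and it resolves your ``main obstacle (i)'' in one line.
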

\begin{proof}
  Let $u\in \mathsf{V}$ solve~\eqref{eq:weakformObstacle}. By construction $(\bsigma,\lambda,u) := (\nabla u,-\Delta u-f,u)\in K\times M$ solves~\eqref{eq:weak_mixed}.

  Conversely, let $(\bsigma,\lambda,u)\in K\times M$ be a solution of~\eqref{eq:weak_mixed}. Choosing $(\bsigma \pm \tilde{\btau},\lambda,0)$ with $\tilde{\btau}\in\Hdivset\Omega$ as test function we get that
  \begin{align*}
    \pm a( (\bsigma,\lambda), (\tilde{\btau},0) ) \pm b( (\tilde{\btau},0),u) \geq 0 \quad\forall \tilde{\btau}\in \Hdivset\Omega.
  \end{align*}
  This is equivalent to $\ip{\bsigma}{\tilde{\btau}}_\Omega + \ip{\div\tilde{\btau}}{u}_\Omega = 0$ for all $\tilde{\btau}\in \Hdivset\Omega$ which, in turns, implies that
  \begin{align*}
    u\in H_0^1(\Omega) \quad\text{with}\quad
    \nabla u = \bsigma. 
  \end{align*}
  Given $\mu\in H^{-1}(\Omega)$ with $\mu\geq 0$, choose $\btau\in [L^2(\Omega)]^n$ such that $\div\btau+\mu\in L^2(\Omega)$ (such an element exists since $\div\colon [L^2(\Omega)]^n\to H^{-1}(\Omega)$ is surjective). 
   Then, from $\nabla u = \bsigma$ and the distributional definition of $\div(\cdot)$ it follows that $\ip{\bsigma}{\btau-\bsigma}_{\Omega}+\dual{\div(\btau-\bsigma)}{u} = 0$. 
  The use of this identity in the first equation of \eqref{eq:weak_mixed} yields
  \begin{align*}
    \dual{\mu-\lambda}{u} = \ip{\bsigma}{\btau-\bsigma}_{\Omega}+\ip{\div(\btau-\bsigma)+\mu-\lambda}{u}_\Omega \geq \int_{\Omega}{g}\,\mathrm{d}(\mu-\lambda).
  \end{align*}
  That is $\int_{\Omega}(u - g)\mathrm{d}(\mu-\lambda)\geq 0$ for all $\mu\in H^{-1}(\Omega)$ with $\mu\geq 0$. This implies that $\int_{\Omega}(u-g)\mathrm{d}\lambda = 0$ and $u\geq g$ a.e. in $\Omega$. 
  Therefore, $u\in \mathsf{V}$.
  On the other hand, testing~\eqref{eq:weak_mixed} with $(\bsigma,\lambda,v)$ ($v\in M$), we get that $\div\bsigma+\lambda=-f$ a.e. in $\Omega$, or equivalently, $\lambda = -\div\bsigma-f = -\Delta u -f$ a.e. in $\Omega$. 
  This identity, the inequality $\int_{\Omega}(v - g)\mathrm{d}\lambda \geq 0$ (which holds for every  $v\in \mathsf{V}$), and $\int_{\Omega}(u - g)\mathrm{d}\lambda = 0$, give
  \begin{align*}
    0\leq \int_{\Omega}(v-u+u-g)\mathrm{d}\lambda = \dual{-\Delta u-f}{v-u} = \ip{\nabla u}{\nabla (v-u)}_\Omega - \ip{f}{v-u}_\Omega \quad\forall v\in \mathsf{V}.
  \end{align*}
  This is variational inequality~\eqref{eq:weakformObstacle} and therefore finishes the proof.
\end{proof}


\subsection{Finite element approximation}\label{sec:fe_approx}
In what follows we assume that $\Omega$ is an open and bounded polygonal/polyhedral domain with Lipschitz boundary. 

Using the notation introduced in section \ref{sec:fem_and_interp}, we define the discrete spaces $M_{h}:= \mathcal{P}^{0}(\cT)$ and $V_{h} := \text{RT}^{0}(\cT)\times \mathcal{P}^{0}(\cT)$, and the set
$$
K_{h}:=\{(\boldsymbol\tau_{h},\mu_{h}) \in V_{h} : \mu_{h}|^{}_{T} \geq 0 \quad \forall T\in \cT\}.
$$
We thus propose the following finite element approximation for problem \eqref{eq:weak_mixed}:
Find $(\boldsymbol\sigma_{h},\lambda_{h},u_{h})\in K_{h}\times M_{h}$ such that
\begin{equation}\label{eq:weak_mixed_discrete}
\left\{
\begin{array}{lcl}
a((\boldsymbol\sigma_{h},\lambda_{h}), (\boldsymbol\tau_{h},\mu_{h}) - (\boldsymbol\sigma_{h}, \lambda_{h})) + b((\boldsymbol\tau_{h},\mu_{h}) - (\boldsymbol\sigma_{h}, \lambda_{h}), u_{h}) & \geq & G((\boldsymbol\tau_{h},\mu_{h}) - (\boldsymbol\sigma_{h}, \lambda_{h}))\\
b((\boldsymbol\sigma_{h}, \lambda_{h}), v_{h})  & = &  F(v_{h})
\end{array}
\right.
\end{equation}
for all $(\boldsymbol\tau_{h},\mu_{h},v_{h})\in K_{h}\times M_{h}$. 

\begin{theorem}[well-posedness]
  Problem \eqref{eq:weak_mixed_discrete} is well posed.
\end{theorem}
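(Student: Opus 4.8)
The plan is to verify that the discrete problem \eqref{eq:weak_mixed_discrete} fits into the abstract framework of Theorem~\ref{thm:wp_discrete_generalized}, so that well-posedness follows immediately once the four hypotheses \eqref{eq:assumption_non_negativity_a}, \eqref{eq:assumption_coercivity_a}, \eqref{eq:conforming_condition}, and \eqref{eq:discrete_inf_sup} are checked. The bilinear forms $\mathfrak a = a$ and $\mathfrak b = b$ are the same as in the continuous problem, so \eqref{eq:assumption_non_negativity_a} is immediate: $a((\boldsymbol\tau_h,\mu_h),(\boldsymbol\tau_h,\mu_h)) = \|\boldsymbol\tau_h\|_\Omega^2 \geq 0$ for all $(\boldsymbol\tau_h,\mu_h)\in V_h$.

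Next I would identify the discrete kernel $N_h = \{(\boldsymbol\tau_h,\mu_h)\in V_h : \ip{\div\boldsymbol\tau_h+\mu_h}{v_h}_\Omega = 0 \ \forall v_h\in M_h\}$. Since $M_h = \mathcal P^0(\cT)$ and $\div\boldsymbol\tau_h+\mu_h \in \mathcal P^0(\cT)$ already (because $\boldsymbol\tau_h\in\RT^0(\cT)$ has piecewise constant divergence and $\mu_h\in\mathcal P^0(\cT)$), testing against $v_h = \div\boldsymbol\tau_h+\mu_h$ forces $\div\boldsymbol\tau_h+\mu_h = 0$ a.e. in $\Omega$. Hence $(\boldsymbol\tau_h,\mu_h)\in V_1$ with $\div\boldsymbol\tau_h+\mu_h = 0\in L^2(\Omega)$, so $N_h\subset N$, giving \eqref{eq:conforming_condition}; moreover on $N_h$ one has $\|(\boldsymbol\tau_h,\mu_h)\|_{V_1}^2 = \|\boldsymbol\tau_h\|_\Omega^2 = a((\boldsymbol\tau_h,\mu_h),(\boldsymbol\tau_h,\mu_h))$, so \eqref{eq:assumption_coercivity_a} holds with $\alpha = 1$ (this coercivity-on-the-kernel is exactly why the abstract theory only demands coercivity on $N$).

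The main obstacle is the discrete inf-sup condition \eqref{eq:discrete_inf_sup}: I must exhibit a subspace $W_h\subset V_h$ with $W_h\subset K_h$ on which $b$ is inf-sup stable with respect to $\|\mathfrak v_h\|_M = \|v_h\|_\Omega$. The natural choice is $W_h := \{(\mathbf 0,\mu_h) : \mu_h\in \mathcal P^0(\cT),\ \mu_h\geq 0 \text{ on each } T\}$, but this is only a convex cone, not a subspace — so instead I would take $W_h := \{(\mathbf 0,\mu_h):\mu_h\in\mathcal P^0(\cT)\}$ and note that although $W_h\not\subset K_h$ in general, the abstract theorem actually only needs $W_h\subset K_h$; to fix this one should rather exploit the structure: given $v_h\in M_h = \mathcal P^0(\cT)$ with $v_h\neq 0$, the element $(\mathbf 0,|v_h|)$ need not help since $b((\mathbf 0,|v_h|),v_h)=\ip{|v_h|}{v_h}_\Omega$ has the wrong sign where $v_h<0$. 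The clean resolution is to choose, for each $v_h$, the scalar-sign-adjusted test element, i.e. take $W_h$ to be the full subspace $\{(\mathbf 0,\mu_h) : \mu_h \in \mathcal P^0(\cT)\}$ and observe that $K_h$ as defined does \emph{not} contain all of it; hence the correct reading is that Theorem~\ref{thm:wp_discrete_generalized} is applied with $W_h$ a genuine subspace contained in $K_h$ only after noting $0_V\in K_h$ and $K_h$ is a cone — I would therefore verify directly that for every $v_h\in M_h$ the choice $\mu_h = v_h \in \mathcal P^0(\cT)$ (allowed as a \emph{test direction} because $K_h$ being a convex cone containing $0$ means $\boldsymbol\sigma_h \pm (\mathbf 0,\mu_h)$ lies in $K_h$ precisely when $\pm\mu_h\geq 0$, which is the subtlety handled in \cite[Lemma 2.1]{MR2073936} for the continuous case and must be re-examined discretely). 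Concretely: set $\boldsymbol\tau_h = \mathbf 0$, $\mu_h = v_h$, then $b((\mathbf 0,v_h),v_h) = \|v_h\|_\Omega^2$ and $\|(\mathbf 0,v_h)\|_{V_1} = \|v_h\|_\Omega$ (since $\div\mathbf 0 + v_h = v_h$), yielding the supremum quotient $\geq \|v_h\|_\Omega$, i.e. $\tilde\beta = 1$. Once I confirm that this $W_h$ (or the relevant cone) is admissible in the sense required by the abstract result — the one genuinely delicate point, reconciling "$W_h$ a closed subspace of $K_h$" with $K_h$ being only a cone — the hypotheses of Theorem~\ref{thm:wp_discrete_generalized} are all met and existence and uniqueness of $(\boldsymbol\sigma_h,\lambda_h,u_h)\in K_h\times M_h$ follow.
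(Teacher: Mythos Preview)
Your verification of \eqref{eq:assumption_non_negativity_a}, \eqref{eq:assumption_coercivity_a}, and \eqref{eq:conforming_condition} is correct and matches the paper. The gap is exactly where you flag it: the discrete inf-sup condition \eqref{eq:discrete_inf_sup}. Your candidate $W_h = \{(\mathbf 0,\mu_h):\mu_h\in\mathcal P^0(\cT)\}$ is \emph{not} contained in $K_h$, and you correctly observe this cannot be repaired by passing to the nonnegative cone (which is not a subspace) or by sign-adjusting (which still fails when $v_h$ changes sign). The abstract Theorem~\ref{thm:wp_discrete_generalized} genuinely requires $W_h$ to be a linear subspace with $W_h\subset K_h$, since its proof uses test functions $\boldsymbol\Sigma_h\pm\widetilde{\boldsymbol\Psi}_h$ for arbitrary $\widetilde{\boldsymbol\Psi}_h\in W_h$; there is no cone version available here.

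The paper resolves this by the opposite choice: take
\[
  W_h := \RT^0(\cT)\times\{0\}.
\]
This \emph{is} a linear subspace, and since the second component is identically zero it trivially satisfies $\mu_h\geq 0$, hence $W_h\subset K_h$. For $(\btau_h,0)\in W_h$ one has $b((\btau_h,0),v_h)=\ip{\div\btau_h}{v_h}_\Omega$ and $\|(\btau_h,0)\|_{V_1}=\|\btau_h\|_{\Hdivset\Omega}$, so \eqref{eq:discrete_inf_sup} reduces precisely to the classical discrete inf-sup condition for the pair $(\RT^0(\cT),\mathcal P^0(\cT))$, which holds with a mesh-independent constant. In short: vary $\btau_h$ with $\mu_h=0$ fixed, not the other way around.
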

\begin{proof}
It suffices to verify the four assumptions of Theorem \ref{thm:wp_discrete_generalized}. 
We note that 
\begin{equation*}
a((\btau,\mu),(\btau,\mu)) = \ip{\btau}{\btau}_{\Omega} = \|\btau\|_{\Omega}^2 \geq 0 \qquad \forall \,(\btau,\mu)\in V.
\end{equation*}
Hence, assumption \eqref{eq:assumption_non_negativity_a} holds. 
To verify \eqref{eq:assumption_coercivity_a}, we note that $N=\{(\btau,\mu)\in V :  b((\btau,\mu),v) = 0 \quad \forall \,v\in M\} =\{(\btau,\mu)\in V :  \div \btau + \mu = 0 \text{ a.e.  in }\Omega\}$. 
It thus follows that 
\begin{equation*}
a((\btau,\mu),(\btau,\mu)) = \|\btau\|_{\Omega}^2 = \|\btau\|_{\Omega}^2 + \|\div \btau + \mu\|_{\Omega}^{2} = \|(\btau,\mu)\|_{V}^{2}  \qquad \forall \,(\btau,\mu)\in N.
\end{equation*}
Assumption \eqref{eq:conforming_condition} is verified since $N_{h}=\{(\btau_{h},\mu_{h})\in V_{h}: b((\btau_{h},\mu_{h}),v_{h}) =  0 \quad \forall \, v\in M_{h}\} = \{(\btau_{h},\mu_{h})\in V_{h}:  \div \btau_{h} + \mu_{h} = 0 \text{ a.e.  in }\Omega\} \subset N$.
Finally, to demonstrate that assumption \eqref{eq:discrete_inf_sup} holds, we consider the subspace $W_{h} = \text{RT}^{0}(\cT)\times \{0\} \subset V_{h}$ which satisfies that $W_{h} \subset K_{h}$. 
Hence, by using the discrete inf-sup condition of Raviart--Thomas spaces \cite[Theorem 13.2]{MR1115239}, we conclude that
\begin{equation*}
\sup_{(\btau_{h},\mu_{h})\in W_{h}}\frac{b((\btau_{h},\mu_{h}),v_{h})}{\|(\btau_{h},\mu_{h})\|_{V}} 
=
\sup_{\btau_{h}\in \text{RT}^{0}(\cT)}\frac{\ip{\div \btau_{h}}{v_{h}}_{\Omega}}{\|\btau_{h}\|_{\Hdivset\Omega}} 
 \geq \tilde{\beta} \|v_{h}\|_{M} \quad \forall \, v_{h}\in M_{h},
\end{equation*}
with $\tilde{\beta} > 0$ independent of the discretization parameter $h$. 
This concludes the proof.
\end{proof}

Before presenting error estimates, we investigate some interesting properties of the mixed scheme~\eqref{eq:weak_mixed_discrete}.
  \begin{proposition}[properties of the solution]\label{prop:properties}
  Let $(\bsigma_h,\lambda_h,u_h)\in K_h\times M_h$ denote the solution to~\eqref{eq:weak_mixed_discrete}. 
  It possesses the following properties:
  \begin{itemize}
    \item[$\mathrm{(i)}$] $\div\bsigma_h + \lambda_h = -\Pi_h^0 f$, 
    \item[$\mathrm{(ii)}$] $\ip{\bsigma_h}{\btau_h}_\Omega + \ip{\div\btau_h}{u_h}_\Omega = 0$ for all $\btau_h\in \RT^0(\cT)$, 
    \item[$\mathrm{(iii)}$] $\ip{\lambda_h}{u_h-g}_\Omega  = 0$ or, equivalently, $\lambda_h(x)\cdot(u_h(x)-\Pi_h^0 g(x)) = 0$ for a.e. $x\in\Omega$, and
    \item[$\mathrm{(iv)}$] $u_h- \Pi_h^0 g\geq 0$.
  \end{itemize}
\end{proposition}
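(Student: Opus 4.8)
The plan is to extract each of the four properties directly from the discrete formulation \eqref{eq:weak_mixed_discrete} by judicious choices of test functions, mimicking the structure of the proof of Proposition~\ref{prop:equivalent_obstacle} but now at the discrete level, where everything is piecewise polynomial and $H^{-1}(\Omega)$ is replaced by $\mathcal{P}^0(\cT)$.

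First, for (i), I would take $v_h\in M_h = \mathcal{P}^0(\cT)$ in the second equation of \eqref{eq:weak_mixed_discrete}: this reads $\ip{\div\bsigma_h+\lambda_h}{v_h}_\Omega = -\ip{f}{v_h}_\Omega$ for all $v_h\in\mathcal{P}^0(\cT)$. Since $\div\bsigma_h+\lambda_h\in\mathcal{P}^0(\cT)$ (as $\bsigma_h\in\RT^0(\cT)$ has piecewise constant divergence and $\lambda_h\in\mathcal{P}^0(\cT)$), the right-hand side equals $-\ip{\Pi_h^0 f}{v_h}_\Omega$, and the identity of piecewise constants follows. For (ii), I would use the same trick as in Proposition~\ref{prop:equivalent_obstacle}: since $W_h = \RT^0(\cT)\times\{0\}\subset K_h$ is a subspace, both $(\bsigma_h,\lambda_h)\pm(\tilde\btau_h,0)\in K_h$ for any $\tilde\btau_h\in\RT^0(\cT)$; plugging these into the first inequality of \eqref{eq:weak_mixed_discrete} and using $G((\tilde\btau_h,0)) = \int_\Omega g\,\mathrm{d}\,0 = 0$ turns the variational inequality into the equality $a((\bsigma_h,\lambda_h),(\tilde\btau_h,0)) + b((\tilde\btau_h,0),u_h) = 0$, i.e. $\ip{\bsigma_h}{\tilde\btau_h}_\Omega + \ip{\div\tilde\btau_h}{u_h}_\Omega = 0$.

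For (iii) and (iv), the idea is to test the first inequality of \eqref{eq:weak_mixed_discrete} with $(\bsigma_h,\mu_h,u_h)$, i.e. keep $\bsigma_h$ and $u_h$ fixed and vary only the measure component $\mu_h\in\mathcal{P}^0(\cT)$ with $\mu_h\geq 0$ elementwise. Since the $a$-term only sees the first component, it cancels, and the bilinear form $b$ contributes $\ip{\mu_h-\lambda_h}{u_h}_\Omega$, while $G((\bsigma_h,\mu_h)-(\bsigma_h,\lambda_h)) = \int_\Omega g\,\mathrm{d}(\mu_h-\lambda_h) = \ip{\Pi_h^0 g}{\mu_h-\lambda_h}_\Omega$ (using that $\mu_h,\lambda_h$ are piecewise constant, so integrating $g$ against them is the same as integrating $\Pi_h^0 g$). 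This yields $\ip{\mu_h-\lambda_h}{u_h-\Pi_h^0 g}_\Omega\geq 0$ for all piecewise-constant $\mu_h\geq 0$. From here the standard variational-inequality-cone argument applies: taking $\mu_h = 0$ gives $\ip{\lambda_h}{u_h-\Pi_h^0 g}_\Omega\leq 0$; taking $\mu_h = 2\lambda_h$ gives $\ip{\lambda_h}{u_h-\Pi_h^0 g}_\Omega\geq 0$; together these give the complementarity identity (iii) in integral form. Then taking $\mu_h$ supported on a single element and using $\ip{\lambda_h}{u_h-\Pi_h^0 g}_\Omega = 0$ gives $\ip{\mu_h}{u_h-\Pi_h^0 g}_\Omega\geq 0$ for all such $\mu_h\geq 0$, forcing $\Pi_h^0(u_h-\Pi_h^0 g) = \Pi_h^0 u_h - \Pi_h^0 g\geq 0$ on each element; since $u_h\in\mathcal{P}^0(\cT)$ already, $u_h = \Pi_h^0 u_h$, giving (iv), and the elementwise version $\lambda_h(x)(u_h(x)-\Pi_h^0 g(x)) = 0$ a.e.\ then follows from (iii) and (iv) since the integrand $\lambda_h(u_h-\Pi_h^0 g)$ is a nonnegative piecewise constant with zero integral.

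**Main obstacle.** None of the steps is deep; the one point requiring a little care is the bookkeeping with the functional $G$, namely justifying $\int_\Omega g\,\mathrm{d}\mu_h = \ip{\Pi_h^0 g}{\mu_h}_\Omega$ for $\mu_h\in\mathcal{P}^0(\cT)$ with $\mu_h\geq 0$ — this is just the definition of the $L^2$-projection against piecewise-constant test functions together with $\dual{\mu_h}{g} = \ip{\mu_h}{g}_\Omega$ when $\mu_h\in L^2(\Omega)$ — and, relatedly, making sure the cone of admissible increments $\{(\pm\mu_h, \text{first component fixed})\}$ keeps us inside $K_h$, which holds precisely because $K_h$ constrains only the second component to be elementwise nonnegative. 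Once these are in place, (i)--(iv) drop out mechanically.
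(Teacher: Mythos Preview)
Your proposal is correct and follows essentially the same approach as the paper: extract (i) from the second equation, (ii) by testing with $(\bsigma_h\pm\tilde\btau_h,\lambda_h)$, then reduce the first inequality to $\ip{\mu_h-\lambda_h}{u_h-g}_\Omega\geq 0$ and run the cone argument with $\mu_h=0,\,2\lambda_h$ for (iii) and local test functions for (iv). The only cosmetic difference is that you fix $\btau_h=\bsigma_h$ to kill the $a$-term directly, whereas the paper keeps $\btau_h$ general and invokes (ii) to cancel it; for (iv) the paper shifts by $\lambda_h$ (taking $\mu_h=\nu_h+\lambda_h$) rather than first appealing to (iii), but this amounts to the same thing.
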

\begin{proof}
We observe that the second equation in~\eqref{eq:weak_mixed_discrete} implies that
  \begin{align*}
    \ip{(\div\bsigma_h+\lambda_h) + f}{v_h}_\Omega = 0 \quad\forall v_h\in M_h = \mathcal{P}^{0}(\cT),
  \end{align*}
  which shows $\mathrm{(i)}$. 
  For the proof of $\mathrm{(ii)}$, take $(\btau_h,\mu_h,v_h)=(\bsigma_{h}\pm\bchi_h,\lambda_h,0)$ in~\eqref{eq:weak_mixed_discrete}, with $\bchi_h\in\RT^0(\cT)$ arbitrary. This gives
  \begin{align*}
    \pm\ip{\bsigma_h}{\bchi_h}_\Omega \pm \ip{\div\bchi_h}{u_h}_{\Omega} \geq 0 \quad\forall \bchi_h\in\RT^0(\cT), 
  \end{align*}
  or, equivalently, $\ip{\bsigma_h}{\bchi_h}_\Omega + \ip{\div\bchi_h}{u_h}_{\Omega} = 0$ for all $\bchi_h\in \RT^0(\cT)$. 
  Using $\mathrm{(i)}$ and $\mathrm{(ii)}$, we see that~\eqref{eq:weak_mixed_discrete} simplifies to
  \begin{align*}
    \ip{\mu_h-\lambda_h}{u_h-g}_\Omega \geq 0 \quad\forall \mu_h\in M_h, \, \mu_h\geq 0.
  \end{align*}
  In the last inequality, choose $\mu_h = 2\lambda_h$ and then $\mu_h = 0$ resulting in $\ip{\pm\lambda_h}{u_h-g}_\Omega \geq 0$ or, equivalently, $\ip{\lambda_h}{u_h-g}_\Omega = 0$, which proves the first identity in $\mathrm{(iii)}$. 
  We notice that, if $\mathrm{(iv)}$ holds, then we have the second identity in $\mathrm{(iii)}$.
  Finally, to prove $\mathrm{(iv)}$, we take $\mu_h = \nu_h+\lambda_h\geq 0$ with $\nu_h\in M_h$, $\nu_h\geq 0$ arbitrary, and obtain $\ip{\nu_h}{u_h-\Pi_h^0g}_\Omega = \ip{\nu_h}{u_h-g}_\Omega \geq 0$. 
  This implies $\mathrm{(iv)}$ and concludes the proof.
\end{proof}

\begin{theorem}[a priori estimate]\label{thm:bestapprox}
Let $(\boldsymbol\sigma,\lambda,u)\in K\times M$ be the unique solution to problem \eqref{eq:weak_mixed} and let $(\boldsymbol\sigma_{h},\lambda_{h},u_{h})\in K_{h}\times M_{h}$ be its finite element approximation obtained as the unique solution to \eqref{eq:weak_mixed_discrete}. 
Then, we have that:
\begin{align*}
\|(\bsigma,\lambda,u)-(\bsigma_h,\lambda_h,u_h)&\|_{V\times M}^{2}
 \lesssim 
\min_{v_{h}\in M_{h}}\|u - v_{h}\|_{\Omega}^{2} \\ 
& + \min_{(\boldsymbol\tau_{h},\mu_{h})\in K_{h}}\left\{\|\boldsymbol\sigma - \boldsymbol\tau_{h}\|_{\Omega}^{2} + \|\textnormal{div}(\boldsymbol\sigma - \boldsymbol\tau_{h}) + \lambda - \mu_{h}\|_{\Omega}^{2} + \int_{\Omega}(u - g)\mathrm{d}(\mu_{h} - \lambda)\right\}.
\end{align*}
\end{theorem}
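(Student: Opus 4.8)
The plan is to run a Céa-type argument adapted to the mixed variational inequality structure, following the pattern one uses for Babu\v{s}ka--Brezzi mixed problems but carefully tracking the inequality terms. First I would fix arbitrary test elements $(\btau_h,\mu_h)\in K_h$ and $v_h\in M_h$ and introduce the errors $\boldsymbol e_\sigma:=\bsigma-\bsigma_h$, $e_\lambda:=\lambda-\lambda_h$, $e_u:=u-u_h$. The natural quantity to estimate is $\|\boldsymbol e_\sigma\|_\Omega^2+\|\div\boldsymbol e_\sigma+e_\lambda\|_\Omega^2+\|e_u\|_\Omega^2$. Since $a((\bsigma,\lambda),\cdot)=\ip{\bsigma}{\cdot}_\Omega$ only sees the $\btau$-component, the coercivity available from \eqref{eq:assumption_coercivity_a} (here $\alpha=1$, as computed in the well-posedness proof) controls $\|\boldsymbol e_\sigma\|_\Omega$ only on the kernel; the $\|\div\boldsymbol e_\sigma+e_\lambda\|_\Omega$ piece has to be recovered from the first equations of \eqref{eq:weak_mixed} and \eqref{eq:weak_mixed_discrete} combined with property (i) of Proposition~\ref{prop:properties}, and $\|e_u\|_\Omega$ from the discrete inf-sup condition \eqref{eq:discrete_inf_sup} applied with test functions in $W_h=\RT^0(\cT)\times\{0\}$.

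Concretely, I would proceed in three steps. \emph{Step 1 (error in $\bsigma$ and in $\div\bsigma+\lambda$).} Using the second equations of both formulations together with $M_h=\mathcal P^0(\cT)$, one gets $\div\boldsymbol e_\sigma+e_\lambda = f-\Pi_h^0 f + (\lambda-\Pi_h^0\lambda) + \dots$; more cleanly, $\ip{\div\boldsymbol e_\sigma+e_\lambda}{v_h}_\Omega$ is computable, so $\|\div\boldsymbol e_\sigma+e_\lambda\|_\Omega$ reduces to an oscillation-type term plus $\|\div\boldsymbol\sigma+\lambda - (\div\btau_h+\mu_h)\|_\Omega$ after inserting $\btau_h,\mu_h$. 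For $\|\boldsymbol e_\sigma\|_\Omega$, write $\boldsymbol e_\sigma = (\bsigma-\btau_h)+(\btau_h-\bsigma_h)$; the pair $(\btau_h-\bsigma_h,\mu_h-\lambda_h)$ is \emph{not} in $N_h$ in general, but I can correct it using the discrete inf-sup to subtract an element making it lie in $N_h$, then apply coercivity on $N_h\subset N$ to the corrected field, testing the first inequalities of \eqref{eq:weak_mixed} with $(\btau_h,\mu_h)\in K_h\subset K$ and of \eqref{eq:weak_mixed_discrete} with $(\bsigma_h\pm\cdots)$ appropriately. \emph{Step 2 (the consistency/inequality terms).} Adding the first inequality of \eqref{eq:weak_mixed} (tested at $(\btau_h,\mu_h)$) and that of \eqref{eq:weak_mixed_discrete} (tested at a suitable admissible discrete field) produces, after cancellations, a term of the form $b((\btau_h,\mu_h)-(\bsigma,\lambda), u) - b((\btau_h,\mu_h)-(\bsigma_h,\lambda_h),u_h)$ plus $G$-terms; the $G$-term $G((\btau_h,\mu_h)-(\bsigma,\lambda))-G((\btau_h,\mu_h)-(\bsigma_h,\lambda_h))$ is exactly $\int_\Omega g\,\mathrm d(\lambda-\lambda_h)$-type contributions, and combining with the complementarity identities $\int_\Omega(u-g)\mathrm d\lambda=0$ (continuous) and $\ip{\lambda_h}{u_h-g}_\Omega=0$ (Proposition~\ref{prop:properties}(iii)) one isolates the nonnegative quantity $\int_\Omega(u-g)\mathrm d(\mu_h-\lambda)$ that appears on the right-hand side of the theorem. \emph{Step 3 (error in $u$).} Use \eqref{eq:discrete_inf_sup}: for any $v_h\in M_h$, $\|u_h-v_h\|_M\lesssim \sup_{\boldsymbol\psi_h\in W_h} b(\boldsymbol\psi_h,u_h-v_h)/\|\boldsymbol\psi_h\|_V$, rewrite $b(\boldsymbol\psi_h,u_h)$ via property (ii) of Proposition~\ref{prop:properties}, subtract the continuous relation $\ip{\bsigma}{\boldsymbol\tau_h}_\Omega+\ip{\div\boldsymbol\tau_h}{u}_\Omega=0$ (valid for $\boldsymbol\tau_h\in\RT^0(\cT)\subset\Hdivset\Omega$, from Proposition~\ref{prop:equivalent_obstacle} since $\bsigma=\nabla u$), to get $b(\boldsymbol\psi_h,u_h-v_h)=-\ip{\boldsymbol e_\sigma}{\boldsymbol\psi_h}_\Omega+b(\boldsymbol\psi_h,u-v_h)$, hence $\|u_h-v_h\|_M\lesssim \|\boldsymbol e_\sigma\|_\Omega + \|u-v_h\|_\Omega$; a triangle inequality then gives $\|e_u\|_\Omega$.

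The main obstacle I anticipate is Step 2: handling the obstacle inequality terms so that the cross terms assemble into the single nonnegative quantity $\int_\Omega(u-g)\mathrm d(\mu_h-\lambda)$ while everything else is absorbed either into best-approximation errors or into a small multiple of $\|\boldsymbol e_\sigma\|_\Omega^2+\|e_u\|_\Omega^2$ (to be hidden on the left by a Young's-inequality/kickback argument). In particular one must check that the quantity $\int_\Omega(u-g)\mathrm d(\mu_h-\lambda_h)$ — which is \emph{not} obviously signed, since $\mu_h-\lambda_h$ need not be nonnegative — can be rewritten using $\mu_h=\nu_h+\lambda_h$ tricks (as in the proof of Proposition~\ref{prop:properties}) and the identity $\ip{\lambda_h}{u_h-g}_\Omega=0$, so that only the genuinely nonnegative consistency term $\int_\Omega(u-g)\mathrm d(\mu_h-\lambda)$ survives on the right. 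The $\RT^0$-conformity $\RT^0(\cT)\subset\Hdivset\Omega$ and the fact that $\bsigma=\nabla u\in[L^2(\Omega)]^n$ with $\div\bsigma\in L^2(\Omega)$ make all the dualities $\dual{\cdot}{\cdot}$ collapse to $L^2$ inner products, which keeps the bookkeeping manageable. After Step 2 the remaining estimates are routine applications of the triangle inequality, the boundedness of $\div\colon[L^2(\Omega)]^n\to H^{-1}(\Omega)$, and Friedrich's inequality, and taking the infimum over $(\btau_h,\mu_h)\in K_h$ and $v_h\in M_h$ yields the claimed bound.
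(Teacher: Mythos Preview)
Your plan is correct and follows essentially the same Falk-type strategy as the paper: use the relation $\div(\bsigma-\bsigma_h)+\lambda-\lambda_h=\Pi_h^0f-f$ for the divergence part, combine the two first-line inequalities (continuous tested with $(\bsigma_h,\lambda_h)\in K_h\subset K$, discrete tested with $(\btau_h,\mu_h)\in K_h$) together with the complementarities $\int_\Omega(u-g)\,\mathrm d\lambda=0$ and $\ip{\lambda_h}{u_h-g}_\Omega=0$ to isolate the nonnegative term $\int_\Omega(u-g)\,\mathrm d(\mu_h-\lambda)$, and close by Young's inequality with kickback.

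The one genuine technical difference is your Step~3. You bound $\|u-u_h\|_\Omega$ abstractly via the discrete inf-sup \eqref{eq:discrete_inf_sup} on $W_h=\RT^0(\cT)\times\{0\}$, combining property~(ii) of Proposition~\ref{prop:properties} with the continuous identity $\ip{\bsigma}{\btau_h}_\Omega+\ip{\div\btau_h}{u}_\Omega=0$ to obtain $\|u-u_h\|_\Omega\lesssim\|\bsigma-\bsigma_h\|_\Omega+\min_{v_h}\|u-v_h\|_\Omega$. The paper instead proves the same intermediate estimate by a concrete duality argument: solve $\Delta\mathsf v=\Pi_h^0u-u_h$, set $\btau_h=\Pi_h^{\div}\nabla\mathsf v$, and use the commuting property $\div\Pi_h^{\div}=\Pi_h^0\div$. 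Your route is cleaner (no auxiliary PDE, no regularity of $\mathsf v$ needed for the interpolant), while the paper's route makes the constant explicit through the stability of $\Pi_h^{\div}$. A second, minor difference: your ``correct $(\btau_h-\bsigma_h,\mu_h-\lambda_h)$ into $N_h$'' description in Step~1 is the classical Brezzi splitting, whereas the paper works more directly by introducing the bilinear form $c\{(\btau_1,\mu_1,v_1),(\btau_2,\mu_2,v_2)\}=\ip{\btau_1}{\btau_2}_\Omega+\ip{\div\btau_2+\mu_2}{v_1}_\Omega$ and decomposing $\mathsf I=c\{(\bsigma-\bsigma_h,\dots),(\bsigma-\bsigma_h,\dots)\}$ as $\mathsf I_1-\mathsf I_2-\mathsf I_3$; this avoids having to check that the corrected field stays admissible in $K_h$.
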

\begin{proof}

  The proof is split into three steps.

  \underline{Step 1.} (estimation of $\|u - u_{h}\|_{\Omega}$) Estimate $\|u - u_{h}\|_{\Omega}^{2} \leq 2\|u - \Pi_{h}^{0}u\|_{\Omega}^{2} + 2\| \Pi_{h}^{0}u - u_{h}\|_{\Omega}^{2}$ follows from the triangle inequality.
We note that $\|u - \Pi_{h}^{0}u\|_{\Omega}^{2} = \min_{v_{h}\in M_{h}}\|u - v_{h}\|_{\Omega}^{2}$. Hence, 
\begin{equation}\label{eq:estimate_u-uh1}
\|u - u_{h}\|_{\Omega}^{2} \leq 2 \min_{v_{h}\in M_{h}}\|u - v_{h}\|_{\Omega}^{2} + 2\| \Pi_{h}^{0}u - u_{h}\|_{\Omega}^{2}.
\end{equation}
To estimate the term $\|\Pi_{h}^{0}u - u_{h}\|_{\Omega}^{2}$ in \eqref{eq:estimate_u-uh1}, we introduce the auxiliary variable $\mathsf{v}\in H_0^{1}(\Omega)$, defined as the unique solution to $\Delta \mathsf{v} = \Pi_{h}^{0} u - u_{h}$ in $\Omega$ and $\mathsf{v} = 0$ on $\partial\Omega$. 
We also define $\btau := \nabla \mathsf{v}$ and $\btau_{h} : = \Pi_{h}^{\textrm{div}}\btau$. 
 Standard properties of the operator $\Pi_{h}^{\textrm{div}}$, property $(\mathrm{ii})$ in Proposition \ref{prop:properties}, and the fact that $\bsigma = \nabla u$ yield
\begin{multline*}
\|\Pi_{h}^{0}u - u_{h}\|_{\Omega}^{2} 
= (u - u_{h},\Pi_{h}^{0}( \div \btau))_{\Omega}
= (u - u_{h},\div\btau_{h})_{\Omega} 
= - (\boldsymbol \sigma - \boldsymbol \sigma_{h},\btau_{h})_{\Omega} \\
\leq \|\boldsymbol \sigma - \boldsymbol \sigma_{h}\|_{\Omega}\|\btau_{h}\|_{\Omega} 
\lesssim \|\boldsymbol \sigma - \boldsymbol \sigma_{h}\|_{\Omega}\|\textrm{div }\btau\|_{\Omega} 
= \|\boldsymbol \sigma - \boldsymbol \sigma_{h}\|_{\Omega}\|\Pi_{h}^{0}u - u_{h}\|_{\Omega}. 
\end{multline*}
Hence, $\|\Pi_{h}^{0}u - u_{h}\|_{\Omega} \lesssim  \|\boldsymbol \sigma - \boldsymbol \sigma_{h}\|_{\Omega}$ which implies, in view of \eqref{eq:estimate_u-uh1}, 
\begin{equation}\label{eq:estimate_u-uh2}
\|u - u_{h}\|_{\Omega}^{2} 
\lesssim 
\min_{v_{h}\in M_{h}}\|u - v_{h}\|_{\Omega}^{2} + \|\boldsymbol \sigma - \boldsymbol \sigma_{h}\|_{\Omega}^{2}.
\end{equation}

\underline{Step 2.} (estimation of $\|\boldsymbol \sigma - \boldsymbol \sigma_{h}\|_{\Omega}$) Let us start by writing 
\begin{align}\label{eq:estimate_sigma_sigmah}
\|\boldsymbol \sigma - \boldsymbol \sigma_{h}\|_{\Omega}^{2} = &~ [(\boldsymbol \sigma - \boldsymbol \sigma_{h}, \boldsymbol \sigma - \boldsymbol \sigma_{h})_{\Omega} + (\textrm{div }(\boldsymbol \sigma - \boldsymbol \sigma_{h}) + \lambda - \lambda_{h},u - u_{h})_{\Omega}] \hspace{-0.3cm} \\
 & + (\textrm{div }(\boldsymbol \sigma - \boldsymbol \sigma_{h}) + \lambda - \lambda_{h}, u_{h} - u)_{\Omega}  =: \mathsf{I} + \mathsf{II}. \nonumber
\end{align}
We estimate $\mathsf{II}$. Recall from Proposition~\ref{prop:properties} that $\div\bsigma_h+\lambda_h = -\Pi_h^0f$. Consequently, 
\begin{equation}\label{eq:f-pi_hf}
 \div(\bsigma - \bsigma_{h}) + \lambda - \lambda_{h} = \Pi_{h}^{0}f - f.
\end{equation}
We thus use \eqref{eq:f-pi_hf} and the orthogonality of $\Pi_{h}^{0}$ to conclude that
\begin{align}\label{eq:estimate_II}
\mathsf{II} = (\Pi_{h}^{0}f - f,\Pi_{h}^{0}u - u)_{\Omega} 
& \leq \frac{1}{2}\|\Pi_{h}^{0}f - f\|_{\Omega}^{2} + \frac{1}{2}\|u - \Pi_{h}^{0}u\|_{\Omega}^{2}\\
& =  \frac{1}{2}\min_{(\boldsymbol\tau_{h},\mu_{h})\in V_{h}}\|\textnormal{div}(\boldsymbol\sigma - \boldsymbol\tau_{h}) + \lambda - \mu_{h}\|_{\Omega}^{2} + \frac{1}{2}\min_{v_{h}\in M_{h}}\|u - v_{h}\|_{\Omega}^{2}. \nonumber
\end{align}

We now control the term $\mathsf{I}$. To simplify the presentation of the material, we introduce the bilinear form $c:  (K\times M) \times (K\times M) \to \mathbb{R}$  defined by 
\begin{align*}
c\{(\boldsymbol\tau_{1},\mu_{1},v_{1}), (\boldsymbol\tau_{2},\mu_{2},v_{2})\}
:= (\btau_{1}, \boldsymbol\tau_{2})_{\Omega} + (\textrm{div } \boldsymbol\tau_{2} + \mu_{2}, v_{1})_{\Omega}.
\end{align*}
Hence, for all $(\boldsymbol\tau_{h},\mu_{h},v_{h}) \in K_{h}\times M_{h}$, it follows that
\begin{align*}
\mathsf{I} 
= ~  &
c\{(\boldsymbol\sigma - \boldsymbol\sigma_{h},\lambda - \lambda_{h}, u - u_h),(\boldsymbol\sigma -\boldsymbol\sigma_{h},\lambda - \lambda_{h}, u - u_h)\} = c\{(\boldsymbol\sigma,\lambda,u),(\boldsymbol\sigma -\boldsymbol\sigma_{h},\lambda - \lambda_{h}, u - u_h)\} \\
&- c\{(\boldsymbol\sigma_{h},\lambda_{h},u_{h}), (\boldsymbol\sigma -\btau_{h},\lambda - \mu_{h}, u - v_h)\}
- c\{(\boldsymbol\sigma_{h},\lambda_{h},u_{h}), (\btau_{h} -\boldsymbol\sigma_{h},\mu_{h} - \lambda_{h}, v_{h} - u_{h})\}=: \mathsf{I}_{1} - \mathsf{I}_{2} -  \mathsf{I}_{3}.
\end{align*}
In view of \eqref{eq:weak_mixed} and \eqref{eq:weak_mixed_discrete}, we obtain that 
\begin{align*}
  \mathsf{I}_{1} \leq  \int_{\Omega}g\,\mathrm{d}(\lambda - \lambda_{h}), \quad\text{and}\quad
\mathsf{I}_{3} \geq  \ip{\mu_{h} - \lambda_{h}}{g}_{\Omega},
\end{align*}
respectively. Consequently, for all $\mu_{h}$, we have
\begin{equation}\label{eq:estimate_I_ineq}
\mathsf{I}
\leq
\int_{\Omega}g\,\mathrm{d}(\lambda - \lambda_{h}) -  \mathsf{I}_{2} -  \ip{\mu_{h} - \lambda_{h}}{g}_{\Omega}
=
\int_{\Omega}g\,\mathrm{d}(\lambda - \mu_{h}) -  \mathsf{I}_{2}.
\end{equation}
To estimate $\mathsf{I}_{2}$ in \eqref{eq:estimate_I_ineq} we use the identity
\begin{equation*}
c\{(\boldsymbol\sigma,\lambda,u), (\boldsymbol\sigma - \boldsymbol\tau_{h},\lambda - \mu_{h}, u - v_{h})\}
=
\langle \lambda - \mu_{h}, u \rangle \quad  \forall (\boldsymbol\tau_{h},\mu_{h},v_{h}) \in K_{h}\times M_{h},
\end{equation*}
which follows from an integration by parts formula and $\boldsymbol\sigma = \nabla u$, to conclude that
\begin{equation*}
-\mathsf{I}_{2} = c\{(\boldsymbol\sigma - \boldsymbol\sigma_{h},\lambda - \lambda_{h}, u - u_{h}),(\boldsymbol\sigma - \boldsymbol\tau_{h},\lambda - \mu_{h}, u - v_{h})\} - \langle \lambda - \mu_{h}, u \rangle \quad  \forall (\boldsymbol\tau_{h},\mu_{h},v_{h}) \in K_{h}\times M_{h}.
\end{equation*} 
Using the latter identity in \eqref{eq:estimate_I_ineq} and applying basic inequalities we obtain 
\begin{align}\label{eq:estimate_I_ineqii}
\mathsf{I} 
\leq & ~ 
c\{(\boldsymbol\sigma - \boldsymbol\sigma_{h},\lambda - \lambda_{h}, u - u_{h}),(\boldsymbol\sigma - \boldsymbol\tau_{h},\lambda - \mu_{h}, u - v_{h})\} + \int_{\Omega}(u - g)\mathrm{d}(\mu_{h} - \lambda) \\ 
\leq & ~
\|\boldsymbol \sigma - \boldsymbol \sigma_{h}\|_{\Omega} \|\boldsymbol \sigma - \boldsymbol \tau_{h}\|_{\Omega} + \|u - u_{h}\|_{\Omega}\|\textnormal{div}(\boldsymbol\sigma - \boldsymbol\tau_{h}) + \lambda - \mu_{h}\|_{\Omega} + \int_{\Omega}(u - g)\mathrm{d}(\mu_{h} - \lambda). \nonumber
\end{align}
Hence, the combination of the estimates \eqref{eq:estimate_sigma_sigmah}, \eqref{eq:estimate_II}, and \eqref{eq:estimate_I_ineqii} results in the bound
\begin{align*}
&\|\boldsymbol \sigma - \boldsymbol \sigma_{h}\|_{\Omega}^{2}
\leq
\min_{v_{h}\in M_{h}}\|u - v_{h}\|_{\Omega}^{2} + \min_{(\boldsymbol\tau_{h},\mu_{h})\in V_{h}}\|\textnormal{div}(\boldsymbol\sigma - \boldsymbol\tau_{h}) + \lambda - \mu_{h}\|_{\Omega}^{2} + \|\boldsymbol \sigma - \boldsymbol \sigma_{h}\|_{\Omega} \|\boldsymbol \sigma - \boldsymbol \tau_{h}\|_{\Omega} \\
~  & + \|u - u_{h}\|_{\Omega}\|\textnormal{div}(\boldsymbol\sigma - \boldsymbol\tau_{h}) + \lambda - \mu_{h}\|_{\Omega} + \int_{\Omega}(u - g)\mathrm{d}(\mu_{h} - \lambda) 
  \lesssim
(1 + \delta)\min_{v_{h}\in M_{h}}\|u - v_{h}\|_{\Omega}^{2} \\
 ~ & + (1 + \delta^{-1})\min_{(\boldsymbol\tau_{h},\mu_{h})\in K_{h}}\left\{\|\textnormal{div}(\boldsymbol\sigma - \boldsymbol\tau_{h}) + \lambda - \mu_{h}\|_{\Omega}^{2}  +  \|\boldsymbol \sigma - \boldsymbol \tau_{h}\|_{\Omega}^{2} + \int_{\Omega}(u - g)\mathrm{d}(\mu_{h} - \lambda) \right\} + \delta\|\boldsymbol \sigma - \boldsymbol \sigma_{h}\|_{\Omega}^{2},
\end{align*}
where we also have used Young's inequality with $\delta>0$, and estimate \eqref{eq:estimate_u-uh2}. Finally, taking $\delta$ small enough we conclude the bound
\begin{align}\label{eq:final_sigma_sigmah}
\|\boldsymbol \sigma - \boldsymbol \sigma_{h}\|_{\Omega}^{2} 
\lesssim &
\min_{v_{h}\in M_{h}}\|u - v_{h}\|_{\Omega}^{2} \\
& ~ + \min_{(\boldsymbol\tau_{h},\mu_{h})\in K_{h}} \left\{\|\textnormal{div}(\boldsymbol\sigma - \boldsymbol\tau_{h}) + \lambda - \mu_{h}\|_{\Omega}^{2}  +  \|\boldsymbol \sigma - \boldsymbol \tau_{h}\|_{\Omega}^{2} + \int_{\Omega}(u - g)\mathrm{d}(\mu_{h} - \lambda) \right\}. \nonumber
\end{align}

\underline{Step 3.} For the final step we note that we have already used in the previous step that 
  \begin{align*}
  \|\div(\bsigma-\bsigma_h)+\lambda-\lambda_h\|_\Omega &= \min_{(\btau_h,\mu_h)\in V_h} \|\div\bsigma+\lambda-\div\btau_h-\mu_h\|_\Omega
  \\ &\leq \min_{(\btau_h,\mu_h)\in K_h} \|\div\bsigma+\lambda-\div\btau_h-\mu_h\|_\Omega.
\end{align*}
Combining this estimate with \eqref{eq:estimate_u-uh2} and \eqref{eq:final_sigma_sigmah} concludes the proof.
\end{proof}

From the previous theorem we derive convergence rates for sufficiently smooth solutions. 

\begin{corollary}[convergence rate]\label{cor:convergence}
 Suppose that $\Omega$ is convex, $g\in H^2(\Omega)$. Then, under the framework of Theorem~\ref{thm:bestapprox}, $u\in H^2(\Omega)\cap H_0^1(\Omega)$ and 
  \begin{align*}
    \|(\bsigma,\lambda,u)-(\bsigma_h,\lambda_h,u_h)\|_{V\times M} &\lesssim  h(\|u\|_2 + \|g\|_2 + \|f\|_\Omega) + \|f-\Pi_h^0 f\|_\Omega.
  \end{align*}
\end{corollary}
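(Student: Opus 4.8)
The plan is to insert well-chosen interpolants into the best-approximation bound of Theorem~\ref{thm:bestapprox} and to invoke the classical regularity theory for the obstacle problem.

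First I would settle the regularity assertion. Since $\Omega$ is convex, $f\in L^2(\Omega)$, and $g\in H^2(\Omega)$, classical regularity results for the membrane obstacle problem (see, e.g., \cite{MR1786735}) yield $u\in H^2(\Omega)\cap H_0^1(\Omega)$ with $-\Delta u\in L^2(\Omega)$: one first obtains interior $H^2$-regularity (so that $-\Delta u$ coincides a.e.\ with $f$ off the contact set and with $-\Delta g$ on it), and convexity of $\Omega$ then upgrades this to global $H^2$-regularity. As a consequence, $\bsigma=\nabla u\in [H^1(\Omega)]^n\cap\Hdivset\Omega$ and $\lambda=-\div\bsigma-f\in L^2(\Omega)$ with $\|\lambda\|_\Omega\lesssim\|u\|_2+\|f\|_\Omega$; moreover $\lambda\geq 0$ a.e.\ in $\Omega$, and, by Proposition~\ref{prop:equivalent_obstacle}, $\ip{\lambda}{u-g}_\Omega=\int_\Omega(u-g)\,\mathrm d\lambda=0$.

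Next I would evaluate the right-hand side of Theorem~\ref{thm:bestapprox} at $v_h=\Pi_h^0 u\in M_h$, $\btau_h=\Pi_h^{\div}\bsigma\in\RT^0(\cT)$ (admissible thanks to the regularity of $\bsigma$), and $\mu_h=\Pi_h^0\lambda\in\mathcal{P}^0(\cT)$; the pair $(\btau_h,\mu_h)$ lies in $K_h$ because $(\Pi_h^0\lambda)|_T=|T|^{-1}\int_T\lambda\geq 0$ for each $T\in\cT$. Three of the four resulting contributions are controlled by standard estimates: $\|u-\Pi_h^0 u\|_\Omega\lesssim h\|u\|_1$ and $\|\bsigma-\Pi_h^{\div}\bsigma\|_\Omega\lesssim h\,|\bsigma|_1\le h\|u\|_2$; and, using the commuting relation $\div\circ\Pi_h^{\div}=\Pi_h^0\circ\div$ together with $\div\bsigma+\lambda=\Delta u+\lambda=-f$, one gets $\div(\bsigma-\btau_h)+\lambda-\mu_h=-(1-\Pi_h^0)f$, hence $\|\div(\bsigma-\btau_h)+\lambda-\mu_h\|_\Omega=\|f-\Pi_h^0 f\|_\Omega$.

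The delicate term is $\int_\Omega(u-g)\,\mathrm d(\mu_h-\lambda)$. Using $\mathrm d\lambda=\lambda\,\mathrm dx$ (now that $\lambda\in L^2(\Omega)$), the complementarity $\int_\Omega(u-g)\lambda\,\mathrm dx=0$, and $L^2$-orthogonality of $\Pi_h^0$, I would rewrite this term as $-\sum_{T\in\cT}\int_T(1-\Pi_h^0)(u-g)\,(1-\Pi_h^0)\lambda\,\mathrm dx$. A crude elementwise Cauchy--Schwarz bound using only $\|(1-\Pi_h^0)(u-g)\|_{L^2(T)}\lesssim h_T\|u-g\|_{H^1(T)}$ would produce just an $O(h)$ estimate; the point is to exploit $u-g\in H^2(\Omega)$ to obtain $\|(1-\Pi_h^0)(u-g)\|_{L^2(T)}\lesssim h_T^2\,|u-g|_{H^2(T)}$, which after summation gives $\bigl|\int_\Omega(u-g)\,\mathrm d(\mu_h-\lambda)\bigr|\lesssim h^2\,|u-g|_2\,\|\lambda\|_\Omega\lesssim h^2(\|u\|_2+\|g\|_2)(\|u\|_2+\|f\|_\Omega)$. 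Collecting the four bounds in Theorem~\ref{thm:bestapprox}, absorbing lower-order quantities into $(\|u\|_2+\|g\|_2+\|f\|_\Omega)^2+\|f-\Pi_h^0 f\|_\Omega^2$, and taking a square root yields the claim. I expect the main obstacle to be precisely this last term: one must recognize that it behaves like $O(h^2)$, not $O(h)$, and this only becomes visible once the full $H^2$-regularity of the gap $u-g$ is used — which is exactly why the hypotheses require $\Omega$ convex and $g\in H^2(\Omega)$.
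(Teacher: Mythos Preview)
Your overall strategy --- plugging $v_h=\Pi_h^0 u$, $\btau_h=\Pi_h^{\div}\bsigma$, $\mu_h=\Pi_h^0\lambda$ into Theorem~\ref{thm:bestapprox} and reducing matters to an $O(h^2)$ bound on the consistency term $\int_\Omega(u-g)\,\mathrm d(\mu_h-\lambda)$ --- is exactly the paper's route, and your regularity discussion and the first three estimates are correct.

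The gap is in your bound for the consistency term. The inequality
\[
  \|(1-\Pi_h^0)(u-g)\|_{L^2(T)} \lesssim h_T^2\,|u-g|_{H^2(T)}
\]
is false: the $L^2$-projection onto \emph{constants} has approximation order one, not two, no matter how smooth the argument is. Take $u-g$ linear on $T$; then the right-hand side vanishes while the left-hand side does not. The best general bound is $\|(1-\Pi_h^0)w\|_{L^2(T)}\lesssim h_T\|\nabla w\|_{L^2(T)}$, which via Cauchy--Schwarz yields only an $O(h)$ estimate for the term and hence only $O(h^{1/2})$ for the final error.

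Recovering the missing power of $h$ requires the \emph{pointwise} complementarity $\lambda(u-g)=0$ a.e., not just its integral consequence. Elements with $\lambda|_T\equiv0$ or $(u-g)|_T\equiv0$ contribute nothing; on the remaining free-boundary elements, $(u-g)$ and $\nabla(u-g)$ vanish on the positive-measure set $T\cap\{u=g\}$, and it is this additional vanishing that supplies the second factor of $h_T$. The paper does not spell out this step but cites~\cite[Proof of Theorem~13]{MR4050087} for it.
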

\begin{proof}
Regularity results under the assumptions considered are well known for problem \eqref{eq:weakformObstacle}; see, e.g.~\cite{MR0239302}. 
Hence, $u\in H^2(\Omega)\cap H_0^1(\Omega)$ and thus $\bsigma = \nabla u\in [H^1(\Omega)]^n$. 
As a consequence, $\btau_h = \Pi_h^{\div}\bsigma$ is well defined, $\div\btau_h = \Pi_h^0\div\bsigma$, and $\|\bsigma-\btau_h\|_\Omega \lesssim h\|u\|_2$.
We also have that, $\lambda = -\Delta u -f \in L^2(\Omega)$, so $\mu_h = \Pi_h^0\lambda$ is well defined. We note that $\mu_h\geq 0$, thus, $(\btau_h,\mu_h)\in K_h$. Therefore, with $(\btau_h,\mu_h)$ at hand, we choose $v_h = \Pi_h^0 u$ and invoke Theorem~\ref{thm:bestapprox} to arrive at
  \begin{align*}
    \|(\bsigma,\lambda,u)-(\bsigma_h,\lambda_h,u_h)\|_{V\times M}^2 &\lesssim \|\bsigma-\btau_h\|_\Omega^2 + \|u-v_h\|_\Omega^2 + \|\div(\bsigma-\btau_h)+\lambda-\mu_h\|_\Omega^2 + \ip{\mu_h-\lambda}{u-g}_{\Omega}
    \\
    &\lesssim h^2\|u\|_2^2 + h^2\|u\|_1^2 + \|(1-\Pi_h^0)f\|_\Omega^2 + \ip{\Pi_h^0\lambda-\lambda}{u-g}_{\Omega}.
  \end{align*}
  Term $\ip{\Pi_h^0\lambda-\lambda}{u-g}_{\Omega}$ is estimated as in~\cite[Proof of Theorem~13]{MR4050087} giving 
  \begin{align*}
    \ip{\Pi_h^0\lambda-\lambda}{u-g}_{\Omega} \lesssim h^2(\|u\|_2+\|g\|_2+\|\lambda\|_\Omega)^2 \lesssim h^2(\|u\|_2+\|g\|_2 + \|f\|_\Omega).
  \end{align*}
  This finishes the proof.
\end{proof}


\subsection{Error estimates in weaker norm}
For deriving convergence rates in a weaker norm, we consider the following auxiliary problem: Find $\widetilde u \in \mathsf{V}$ such that
\begin{align*}
  \ip{\nabla \widetilde u}{\nabla(v-\widetilde u)}_\Omega \geq \ip{\Pi_h^0f}{v-\widetilde u}_\Omega \quad\forall v\in \mathsf{V}.
\end{align*}
This is the membrane obstacle problem replacing $f\in L^2(\Omega)$ with its piecewise constant approximation $\Pi_h^0 f$.
Setting $\widetilde\bsigma=\nabla\widetilde u$, $\widetilde\lambda = -\Delta\widetilde u-\Pi_h^0f$, we see that $(\widetilde\bsigma,\widetilde\lambda,\widetilde u)\in K\times M$ solves the mixed formulation~\eqref{eq:weak_mixed} with $f$ replaced by $\Pi_h^0 f$.
\begin{lemma}\label{lem:auxapriori}
  Let $(\bsigma,\lambda,u)\in K\times M$ denote the solution to problem \eqref{eq:weak_mixed} and let $(\widetilde\bsigma,\widetilde\lambda,\widetilde u)\in K\times M$ be the unique solution to problem \eqref{eq:weak_mixed} with $f$ replaced by $\Pi_h^0f$.
   Then, 
  \begin{align*}
    \|\bsigma-\widetilde\bsigma\|_{\Omega} + \|u-\widetilde u\|_\Omega + \|\lambda-\widetilde\lambda\|_{-1} \lesssim \|(1-\Pi_h^0)f\|_{-1} \lesssim  \|h_{\cT}(1-\Pi_h^0)f\|_\Omega.
  \end{align*}
\end{lemma}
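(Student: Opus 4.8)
The plan is to exploit the fact that both $(\bsigma,\lambda,u)$ and $(\widetilde\bsigma,\widetilde\lambda,\widetilde u)$ solve the same mixed problem~\eqref{eq:weak_mixed}, differing only in the right-hand side functional: the first uses $F(v)=-\ip{f}{v}_\Omega$, the second uses $\widetilde F(v)=-\ip{\Pi_h^0f}{v}_\Omega$; the functional $G$ is identical for both. Since the two right-hand sides coincide on $M_h=\mathcal{P}^0(\cT)$ — indeed $\ip{f}{v_h}_\Omega = \ip{\Pi_h^0f}{v_h}_\Omega$ for all $v_h\in M_h$ — but more to the point, the difference $F-\widetilde F = -\ip{(1-\Pi_h^0)f}{\cdot}_\Omega$ has small norm in $M'$, measured appropriately. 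The key observation, exactly as in the proof of Lemma~\ref{lemma:stab_estimate} and the perturbation argument of Theorem~\ref{thm:wp_generalized}, is that the difference of two solutions to the variational inequality with the same convex set but different data can be estimated by the difference of the data, because the inequality terms involving $G$ cancel.

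First I would subtract the two second (equality) equations to get $b((\bsigma-\widetilde\bsigma,\lambda-\widetilde\lambda),v) = \ip{(1-\Pi_h^0)f}{v}_\Omega$ for all $v\in M$; in particular $\div(\bsigma-\widetilde\bsigma)+(\lambda-\widetilde\lambda) = -(1-\Pi_h^0)f$ a.e.\ in $\Omega$. Next I would add the two first (inequality) equations: testing~\eqref{eq:weak_mixed} for $(\bsigma,\lambda,u)$ with $(\btau,\mu)=(\widetilde\bsigma,\widetilde\lambda)$ and the one for $(\widetilde\bsigma,\widetilde\lambda,\widetilde u)$ with $(\btau,\mu)=(\bsigma,\lambda)$ (both admissible since $K$ is convex and these are points of $K$), then summing, the $G$-terms cancel and one obtains
\[
  a((\bsigma-\widetilde\bsigma,\lambda-\widetilde\lambda),(\bsigma-\widetilde\bsigma,\lambda-\widetilde\lambda)) + b((\bsigma-\widetilde\bsigma,\lambda-\widetilde\lambda),u-\widetilde u) \le 0,
\]
i.e.\ $\|\bsigma-\widetilde\bsigma\|_\Omega^2 + \ip{\div(\bsigma-\widetilde\bsigma)+\lambda-\widetilde\lambda}{u-\widetilde u}_\Omega \le 0$. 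Substituting $\div(\bsigma-\widetilde\bsigma)+\lambda-\widetilde\lambda = -(1-\Pi_h^0)f$ gives $\|\bsigma-\widetilde\bsigma\|_\Omega^2 \le \ip{(1-\Pi_h^0)f}{u-\widetilde u}_\Omega$. Since both $u$ and $\widetilde u$ lie in $H_0^1(\Omega)$ (by Proposition~\ref{prop:equivalent_obstacle}, $\bsigma=\nabla u$ and $\widetilde\bsigma=\nabla\widetilde u$), we have $\|u-\widetilde u\|_1 = \|\nabla(u-\widetilde u)\|_\Omega = \|\bsigma-\widetilde\bsigma\|_\Omega$, so the right-hand side is bounded by $\|(1-\Pi_h^0)f\|_{-1}\,\|\bsigma-\widetilde\bsigma\|_\Omega$, yielding $\|\bsigma-\widetilde\bsigma\|_\Omega \lesssim \|(1-\Pi_h^0)f\|_{-1}$. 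Friedrich's inequality then controls $\|u-\widetilde u\|_\Omega \le C_F\|u-\widetilde u\|_1 = C_F\|\bsigma-\widetilde\bsigma\|_\Omega$.

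For $\|\lambda-\widetilde\lambda\|_{-1}$ I would use $\lambda-\widetilde\lambda = -(1-\Pi_h^0)f - \div(\bsigma-\widetilde\bsigma)$ together with the boundedness $\|\div\btau\|_{-1}\le\|\btau\|_\Omega$ recalled in section~\ref{sec:notation}, obtaining $\|\lambda-\widetilde\lambda\|_{-1} \le \|(1-\Pi_h^0)f\|_{-1} + \|\bsigma-\widetilde\bsigma\|_\Omega \lesssim \|(1-\Pi_h^0)f\|_{-1}$. Finally, the last inequality $\|(1-\Pi_h^0)f\|_{-1}\lesssim\|h_\cT(1-\Pi_h^0)f\|_\Omega$ is precisely the $L^2$-projection estimate with $m=0$ recorded in section~\ref{sec:fem_and_interp}. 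The main (and only mild) obstacle is the bookkeeping in the cancellation step: one must be careful that $(\widetilde\bsigma,\widetilde\lambda)\in K$ and $(\bsigma,\lambda)\in K$ so that both serve as legitimate test functions in each other's variational inequality, and that the functional $G$ is genuinely the same in both problems (it depends only on $g$, not on $f$), so the $G$-contributions indeed cancel upon summation; everything else is a direct application of coercivity, Friedrich's inequality, boundedness of $\div$, and the standard projection estimate.
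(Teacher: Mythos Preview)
Your proof is correct and follows essentially the same underlying idea as the paper --- both exploit the monotonicity of the variational inequality to obtain $\|\bsigma-\widetilde\bsigma\|_\Omega = \|u-\widetilde u\|_1 \le \|(1-\Pi_h^0)f\|_{-1}$, and then bound $\|u-\widetilde u\|_\Omega$ and $\|\lambda-\widetilde\lambda\|_{-1}$ by Friedrich's inequality and the boundedness of $\div\colon [L^2(\Omega)]^n\to H^{-1}(\Omega)$, respectively. The only stylistic difference is that the paper works in the primal setting, writing $\|u-\widetilde u\|_1^2 = \dual{-\Delta(u-\widetilde u)}{u-\widetilde u}$ and invoking the complementarity conditions $\int_\Omega(u-g)\,\mathrm{d}\lambda = 0 = \int_\Omega(\widetilde u-g)\,\mathrm{d}\widetilde\lambda$ together with the sign conditions explicitly to show $\dual{\lambda-\widetilde\lambda}{u-\widetilde u}\le 0$, whereas you stay inside the mixed framework and obtain the same inequality by testing each first variational inequality with the other solution and adding, so that the $G$-terms cancel automatically; your route is arguably a bit cleaner since it does not require unpacking the complementarity relations by hand.
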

\begin{proof}
  Definition of $\bsigma$, $\widetilde\bsigma$, $\lambda$, and $\widetilde\lambda$, the triangle inequality, and Friedrich's inequality yield $\|\bsigma-\widetilde\bsigma\|_\Omega + \|u-\widetilde u\|_{\Omega} + \|\lambda-\widetilde\lambda\|_{-1} \lesssim \|u-\widetilde u\|_{1} + \|f-\Pi_h^0f\|_{-1}$. 
  It remains to show that $\|u-\widetilde u\|_{1} \leq \|f-\Pi_h^0f\|_{-1}$. 
  Using that $\lambda = -\Delta u -f$, $\widetilde\lambda = -\Delta\widetilde u - \Pi_h^0 f$, $\int_{\Omega}(u-g)\mathrm{d}\lambda = 0 = \int_{\Omega}(\widetilde{u}-g)\mathrm{d}\widetilde{\lambda}$ as well as $\lambda,\widetilde\lambda,u-g,\widetilde u-g\geq 0$ we obtain
  \begin{align*}
    \|u-\widetilde u\|_{1}^2 &= \dual{-\Delta u+\Delta\widetilde u}{u-\widetilde u} = \dual{\lambda -\widetilde\lambda + f -\Pi_h^0f}{u-\widetilde u}
    \\
    &= \int_{\Omega}(u-g)\mathrm{d}(\lambda-\widetilde\lambda) + \int_{\Omega}(g-\widetilde u)\mathrm{d}(\lambda-\widetilde\lambda) + \dual{f-\Pi_h^0f}{u-\widetilde u}
    \\
    &= -\int_{\Omega}(u-g)\mathrm{d}\widetilde\lambda + \int_{\Omega}(g-\widetilde u)\mathrm{d}\lambda + \dual{f-\Pi_h^0f}{u-\widetilde u} \leq \dual{f-\Pi_h^0f}{u-\widetilde u} 
    \leq \|f-\Pi_h^0f\|_{-1}\|u-\widetilde u\|_{1}. 
  \end{align*}
 The proof finishes by dividing by $\|u-\widetilde u\|_{1}$ and using the property of $\Pi^{0}_{h}$ from section \ref{sec:fem_and_interp}.
\end{proof}

\begin{lemma}\label{lem:auximproved}
  Let $(\widetilde\bsigma,\widetilde\lambda,\widetilde u)\in K\times M$ be the unique solution to problem \eqref{eq:weak_mixed} with $f$ replaced by $\Pi_h^0f$ and let $(\boldsymbol\sigma_{h},\lambda_{h},u_{h})\in K_{h}\times M_{h}$ be the finite element approximation obtained as the unique solution to \eqref{eq:weak_mixed_discrete}. Then, 
  \begin{align*}
    \|\widetilde\bsigma-\bsigma_h\|_\Omega^2 + \|\widetilde\lambda-\lambda_h\|_{-1}^2 + \|\widetilde u-u_h\|_\Omega^2 &\lesssim \min_{v_h\in M_h} \|\widetilde u-v_h\|_\Omega^2
    \\
    & + \min_{(\btau_h,\mu_h)\in K_h, \, \div\btau_h+\mu_h=-\Pi_h^0f} \left\{\|\widetilde\bsigma-\btau_h\|_\Omega^2 + \int_{\Omega}(\widetilde{u} - g)\mathrm{d}(\mu_{h} - \widetilde{\lambda})\right\}.
  \end{align*}
\end{lemma}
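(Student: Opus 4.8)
The plan is to follow the structure of the proof of Theorem~\ref{thm:bestapprox}, using crucially that for this particular pair of solutions the divergence component of the error vanishes identically. Write $e_\bsigma:=\widetilde\bsigma-\bsigma_h$, $e_\lambda:=\widetilde\lambda-\lambda_h$, $e_u:=\widetilde u-u_h$. Since $(\widetilde\bsigma,\widetilde\lambda,\widetilde u)$ solves~\eqref{eq:weak_mixed} with $f$ replaced by $\Pi_h^0f$ we have $\div\widetilde\bsigma+\widetilde\lambda=-\Pi_h^0f$, while Proposition~\ref{prop:properties}(i) gives $\div\bsigma_h+\lambda_h=-\Pi_h^0f$; hence $\div e_\bsigma+e_\lambda=0$, so $e_\lambda=-\div e_\bsigma$ and $\|e_\lambda\|_{-1}\leq\|e_\bsigma\|_\Omega$ by boundedness of $\div\colon[L^2(\Omega)]^n\to H^{-1}(\Omega)$. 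Moreover, repeating Step~1 of the proof of Theorem~\ref{thm:bestapprox} verbatim with $u,\bsigma$ replaced by $\widetilde u,\widetilde\bsigma$ — introduce $\mathsf v\in H_0^1(\Omega)$ with $\Delta\mathsf v=\Pi_h^0\widetilde u-u_h$, put $\btau=\nabla\mathsf v$, $\btau_h=\Pi_h^{\div}\btau$, and use Proposition~\ref{prop:properties}(ii) together with $\widetilde\bsigma=\nabla\widetilde u$ — yields $\|\Pi_h^0\widetilde u-u_h\|_\Omega\lesssim\|e_\bsigma\|_\Omega$ and therefore $\|e_u\|_\Omega^2\lesssim\min_{v_h\in M_h}\|\widetilde u-v_h\|_\Omega^2+\|e_\bsigma\|_\Omega^2$. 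It thus remains to bound $\|e_\bsigma\|_\Omega^2$.

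For the latter, since $\div e_\bsigma+e_\lambda=0$ we may write, with $c$ the auxiliary bilinear form from the proof of Theorem~\ref{thm:bestapprox},
\[
\|e_\bsigma\|_\Omega^2=\ip{e_\bsigma}{e_\bsigma}_\Omega+\ip{\div e_\bsigma+e_\lambda}{e_u}_\Omega=c\{(e_\bsigma,e_\lambda,e_u),(e_\bsigma,e_\lambda,e_u)\}.
\]
Fix $(\btau_h,\mu_h)\in K_h$ with $\div\btau_h+\mu_h=-\Pi_h^0f$ (such pairs exist; $(\bsigma_h,\lambda_h)$ is one) and any $v_h\in M_h$, and split $\|e_\bsigma\|_\Omega^2=\mathsf I_1-\mathsf I_2-\mathsf I_3$ exactly as in that proof. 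Testing~\eqref{eq:weak_mixed} (with $f$ replaced by $\Pi_h^0f$) with the admissible function $(\bsigma_h,\lambda_h)\in K_h\subset K$ gives $\mathsf I_1\leq\int_\Omega g\,\mathrm d(\widetilde\lambda-\lambda_h)$, and testing~\eqref{eq:weak_mixed_discrete} with $(\btau_h,\mu_h,v_h)$ gives $\mathsf I_3\geq\ip{\mu_h-\lambda_h}{g}_\Omega$; simplifying the resulting measure terms leads to $\|e_\bsigma\|_\Omega^2\leq-\int_\Omega g\,\mathrm d(\mu_h-\widetilde\lambda)-\mathsf I_2$. For $\mathsf I_2$ one uses the identity $c\{(\widetilde\bsigma,\widetilde\lambda,\widetilde u),(\widetilde\bsigma-\btau_h,\widetilde\lambda-\mu_h,\widetilde u-v_h)\}=\dual{\widetilde\lambda-\mu_h}{\widetilde u}$ (integration by parts and $\widetilde\bsigma=\nabla\widetilde u$, as in Theorem~\ref{thm:bestapprox}), \emph{together with} the constraint $\div(\widetilde\bsigma-\btau_h)+\widetilde\lambda-\mu_h=0$, which annihilates the divergence contribution and collapses $-\mathsf I_2$ to $\ip{e_\bsigma}{\widetilde\bsigma-\btau_h}_\Omega-\dual{\widetilde\lambda-\mu_h}{\widetilde u}$. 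Regrouping the duality terms as $\int_\Omega(\widetilde u-g)\,\mathrm d(\mu_h-\widetilde\lambda)$ and applying Young's inequality to absorb $\ip{e_\bsigma}{\widetilde\bsigma-\btau_h}_\Omega$ into the left-hand side produces
\[
\|e_\bsigma\|_\Omega^2\lesssim\|\widetilde\bsigma-\btau_h\|_\Omega^2+\int_\Omega(\widetilde u-g)\,\mathrm d(\mu_h-\widetilde\lambda).
\]
Taking the infimum over all admissible $(\btau_h,\mu_h)$ and combining with the bounds for $\|e_\lambda\|_{-1}$ and $\|e_u\|_\Omega$ from the first paragraph completes the proof.

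I expect the only delicate point to be the bookkeeping of the duality and $L^2$ pairings: $\widetilde\lambda$ lies only in $H^{-1}(\Omega)$ and $(\widetilde\bsigma,\widetilde\lambda)$ only in $V_1$ (not in $\Hdivset\Omega\times L^2(\Omega)$), so every occurrence of $\div(\cdot)+(\cdot)$ must be treated in the correct space. The mechanism that makes the argument work — and that accounts for the cleaner right-hand side compared with Theorem~\ref{thm:bestapprox} — is that the constraint $\div\btau_h+\mu_h=-\Pi_h^0f$ simultaneously keeps the test pair in the kernel class of $b$ (so the divergence error never appears) and forces $\div(\widetilde\bsigma-\btau_h)+\widetilde\lambda-\mu_h=0$ in the estimate of $\mathsf I_2$; in particular no term $\|\div(\widetilde\bsigma-\btau_h)+\widetilde\lambda-\mu_h\|_\Omega^2$ survives.
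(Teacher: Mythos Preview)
Your argument is correct and follows the same underlying mechanism as the paper's proof: both rely on the fact that $\div(\widetilde\bsigma-\bsigma_h)+\widetilde\lambda-\lambda_h=0$ and that restricting the test pairs to $\div\btau_h+\mu_h=-\Pi_h^0f$ kills the divergence contribution. The only difference is one of economy: the paper simply invokes Theorem~\ref{thm:bestapprox} directly (observing that the discrete problem~\eqref{eq:weak_mixed_discrete} is unchanged when $f$ is replaced by $\Pi_h^0f$, since $M_h=\mathcal{P}^0(\cT)$) and then bounds $\|\widetilde\lambda-\lambda_h\|_{-1}$ by $\|(\widetilde\bsigma-\bsigma_h,\widetilde\lambda-\lambda_h)\|_V$ via the triangle inequality, whereas you re-run the steps of that theorem by hand in this special case.
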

\begin{proof}
 We use the bound $\|\widetilde\bsigma-\bsigma_h\|_\Omega + \|\widetilde\lambda-\lambda_h\|_{-1} \lesssim \|(\widetilde\bsigma-\bsigma_h,\widetilde\lambda-\lambda_h)\|_V$, which follows from the triangle inequality.
  The desired estimate stems from an application of Theorem~\ref{thm:bestapprox} with $(\bsigma,\lambda,u)$ replaced by $(\widetilde\bsigma,\widetilde\lambda,\widetilde u)$ and noting that $\div(\widetilde\bsigma-\btau_h)+\widetilde\lambda-\mu_h=0$ if $\div\btau_h+\mu_h = -\Pi_h^0 f$.
\end{proof}

\begin{theorem}[a priori estimate in weaker norm]\label{thm:error_estimate_membrane}
  Suppose that $\Omega$ is convex, $g\in H^2(\Omega)$. Then, under the framework of Theorem~\ref{thm:bestapprox}, $u\in H^2(\Omega)\cap H_0^1(\Omega)$ and 
  \begin{align*}
    \|\bsigma-\bsigma_h\|_\Omega + \|\lambda-\lambda_h\|_{-1}+\|u-u_h\|_\Omega &\lesssim  h(\|u\|_2 + \|g\|_2 + \|f\|_\Omega).
  \end{align*}
\end{theorem}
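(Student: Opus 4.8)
The plan is to insert the auxiliary solution $(\widetilde\bsigma,\widetilde\lambda,\widetilde u)$ supplied by Lemmas~\ref{lem:auxapriori} and~\ref{lem:auximproved} and to estimate each contribution separately. By the triangle inequality,
\begin{align*}
  \|\bsigma-\bsigma_h\|_\Omega + \|\lambda-\lambda_h\|_{-1}+\|u-u_h\|_\Omega
  &\leq \bigl(\|\bsigma-\widetilde\bsigma\|_\Omega + \|\lambda-\widetilde\lambda\|_{-1}+\|u-\widetilde u\|_\Omega\bigr)\\
  &\quad + \bigl(\|\widetilde\bsigma-\bsigma_h\|_\Omega + \|\widetilde\lambda-\lambda_h\|_{-1}+\|\widetilde u-u_h\|_\Omega\bigr).
\end{align*}
The first bracket is $\lesssim \|h_\cT(1-\Pi_h^0)f\|_\Omega \leq h\|(1-\Pi_h^0)f\|_\Omega \leq h\|f\|_\Omega$ by Lemma~\ref{lem:auxapriori}, so the remaining task is the second bracket, and for this Lemma~\ref{lem:auximproved} reduces matters to bounding the two (quasi-)best-approximation terms for the \emph{auxiliary} solution. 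The regularity statement $u\in H^2(\Omega)\cap H_0^1(\Omega)$ follows, as in Corollary~\ref{cor:convergence}, from convexity of $\Omega$, $g\in H^2(\Omega)$ and~\cite{MR0239302}.

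First I would record the corresponding regularity for the auxiliary problem: since $\Omega$ is convex, $g\in H^2(\Omega)$, and the right-hand side $\Pi_h^0f\in L^2(\Omega)$, the elliptic regularity theory for the obstacle problem yields $\widetilde u\in H^2(\Omega)\cap H_0^1(\Omega)$ with $\|\widetilde u\|_2\lesssim \|\Pi_h^0f\|_\Omega+\|g\|_2\lesssim \|f\|_\Omega+\|g\|_2$, whence $\widetilde\bsigma=\nabla\widetilde u\in[H^1(\Omega)]^n$ and $\widetilde\lambda=-\Delta\widetilde u-\Pi_h^0f\in L^2(\Omega)$ with $\|\widetilde\lambda\|_\Omega\lesssim \|f\|_\Omega+\|g\|_2$. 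The crucial point is that the regularity constant is independent of $h$, because $\|\Pi_h^0f\|_\Omega\leq\|f\|_\Omega$.

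Next I would choose the discrete competitors in Lemma~\ref{lem:auximproved}: $v_h=\Pi_h^0\widetilde u\in M_h$, $\btau_h=\Pi_h^{\div}\widetilde\bsigma$, and $\mu_h=\Pi_h^0\widetilde\lambda$. The commuting property $\div\circ\Pi_h^{\div}=\Pi_h^0\circ\div$ gives $\div\btau_h+\mu_h=\Pi_h^0(\div\widetilde\bsigma+\widetilde\lambda)=\Pi_h^0(-\Pi_h^0f)=-\Pi_h^0f$, and $\mu_h=\Pi_h^0\widetilde\lambda\geq 0$ since $\widetilde\lambda\geq 0$; hence $(\btau_h,\mu_h)\in K_h$ lies in the constrained minimization set. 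Standard interpolation estimates give $\|\widetilde u-v_h\|_\Omega\lesssim h\|\widetilde u\|_1$ and $\|\widetilde\bsigma-\btau_h\|_\Omega\lesssim h\|\widetilde\bsigma\|_1\lesssim h\|\widetilde u\|_2$. For the remaining term $\int_\Omega(\widetilde u-g)\,\mathrm{d}(\mu_h-\widetilde\lambda)=\ip{\Pi_h^0\widetilde\lambda-\widetilde\lambda}{\widetilde u-g}_\Omega$, I would argue exactly as in Corollary~\ref{cor:convergence} (that is, as in~\cite[Proof of Theorem~13]{MR4050087}), using the complementarity $\int_\Omega(\widetilde u-g)\,\mathrm{d}\widetilde\lambda=0$ together with $\widetilde\lambda\geq 0$ and $\widetilde u-g\geq 0$, to obtain $\ip{\Pi_h^0\widetilde\lambda-\widetilde\lambda}{\widetilde u-g}_\Omega\lesssim h^2(\|\widetilde u\|_2+\|g\|_2+\|\widetilde\lambda\|_\Omega)^2$. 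Plugging these into Lemma~\ref{lem:auximproved} and using the $h$-uniform auxiliary regularity shows the square of the second bracket is $\lesssim h^2(\|f\|_\Omega+\|g\|_2)^2$. Adding the two brackets gives $\lesssim h(\|f\|_\Omega+\|g\|_2)$, which in particular implies the stated bound $h(\|u\|_2+\|g\|_2+\|f\|_\Omega)$.

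The main obstacle — a mild one, since it was already settled for Corollary~\ref{cor:convergence} — is the $h^2$-estimate of $\ip{\Pi_h^0\widetilde\lambda-\widetilde\lambda}{\widetilde u-g}_\Omega$: a naive Cauchy--Schwarz bound only yields $O(h)$ because $\widetilde\lambda$ is merely $L^2$, so one must instead exploit complementarity element by element (on elements where $\widetilde u>g$ the reaction $\widetilde\lambda$ vanishes, while on contact elements $\widetilde u-g$ is small). The only genuinely new ingredient compared with Corollary~\ref{cor:convergence} is the $h$-uniform $H^2$-regularity of the auxiliary solution $\widetilde u$, which is immediate once one notes $\|\Pi_h^0f\|_\Omega\leq\|f\|_\Omega$.
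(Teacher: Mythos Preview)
Your proof is correct but follows a genuinely different route from the paper after the common triangle-inequality split via Lemmas~\ref{lem:auxapriori} and~\ref{lem:auximproved}. You invoke a quantitative $H^2$ regularity estimate for the \emph{auxiliary} obstacle problem, $\|\widetilde u\|_2\lesssim \|\Pi_h^0f\|_\Omega+\|g\|_2\leq \|f\|_\Omega+\|g\|_2$, and then apply the Corollary~\ref{cor:convergence} machinery directly to $(\widetilde u,\widetilde\lambda,g)$ to bound $\ip{\Pi_h^0\widetilde\lambda-\widetilde\lambda}{\widetilde u-g}_\Omega$. The paper, by contrast, never uses $H^2$ regularity of $\widetilde u$: it rewrites $\ip{\mu_h-\widetilde\lambda}{\widetilde u-g}_\Omega=\ip{\mu_h}{\widetilde u-g}_\Omega$ by inserting $u$ and $\lambda$, exploits the sign conditions $\widetilde\lambda\geq 0$ and $u-g\geq 0$ to drop terms, and reduces everything to $\|\mu_h-\lambda\|_{-1}$, $\|\widetilde u-u\|_1$, $\|\lambda-\widetilde\lambda\|_{-1}$, and $\ip{\mu_h-\lambda}{u-g}_\Omega$ --- quantities involving only the \emph{original} solution --- before invoking Corollary~\ref{cor:convergence} for $(u,\lambda,g)$ and Lemma~\ref{lem:auxapriori} once more. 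Your approach is shorter and even yields the slightly sharper bound $h(\|f\|_\Omega+\|g\|_2)$, at the cost of importing an additional (standard, but not stated in the paper) a~priori regularity bound for the auxiliary obstacle problem; the paper's approach is more self-contained, relying only on the regularity of $u$ already asserted in the statement.
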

\begin{proof}
  Let $\widetilde\bsigma$, $\widetilde\lambda$, and $\widetilde u$ be defined as in Lemma~\ref{lem:auximproved}. 
  By the triangle inequality and Lemmas~\ref{lem:auxapriori} and \ref{lem:auximproved} we get
  \begin{align*}
  \|\bsigma & -\bsigma_h\|_\Omega^{2} + \|\lambda-\lambda_h\|_{-1}^{2} + \|u-u_h\|_\Omega^{2} \\
 & \lesssim
  \|\bsigma -\widetilde\bsigma\|_\Omega^{2} + \|\lambda - \widetilde\lambda\|_{-1}^{2} + \|u - \widetilde{u}\|_\Omega^{2} + \|\widetilde\bsigma-\bsigma_h\|_\Omega^2 + \|\widetilde\lambda-\lambda_h\|_{-1}^2 + \|\widetilde u-u_h\|_\Omega^2 \\
 &  \lesssim  
 h^{2} \|(1-\Pi_h^0)f\|_\Omega^{2} + \min_{v_h\in M_h} \|\widetilde u-v_h\|_\Omega^2   + \min_{(\btau_h,\mu_h)\in K_h, \, \div\btau_h+\mu_h=-\Pi_h^0f} \{\|\widetilde\bsigma-\btau_h\|_\Omega^2 + \ip{\mu_h-\widetilde\lambda}{\widetilde u-g}_{\Omega}\}.
  \end{align*}
Let us concentrate in the last term on the right-hand side of the previous inequality. Since $\ip{\widetilde\lambda}{\widetilde u-g}_{\Omega} = 0$ and $-\dual{\widetilde{\lambda}}{\widetilde{u} - u} \geq 0$, we obtain
  \begin{align*}
    \ip{\mu_h-\widetilde\lambda}{\widetilde u-g}_{\Omega}
	=
    \ip{\mu_h}{\widetilde u-g}_{\Omega}
    &=
    \dual{\mu_h - \lambda}{\widetilde u-u} + \dual{\lambda}{\widetilde u - u}  + \ip{\mu_h}{u-g}_{\Omega}    \\
    &\leq \dual{\mu_h-\lambda}{\widetilde u-u}  + \dual{\lambda - \widetilde \lambda}{\widetilde u - u} + \ip{\mu_h}{u-g}_{\Omega} \\
    &\leq \|\mu_h-\lambda\|_{-1}\|\widetilde u-u\|_1 + \|\lambda - \widetilde \lambda\|_{-1}\|\widetilde u-u\|_1 + \ip{\mu_h}{u-g}_{\Omega}  \\
    &\lesssim \|\mu_h-\lambda\|_{-1}^2 + \|\widetilde u-u\|_1^2 + \|\lambda - \widetilde \lambda\|_{-1}^{2} + \ip{\mu_h}{u-g}_{\Omega}.
  \end{align*}
  This, in conjunction with the identity $\ip{\lambda}{u-g}_{\Omega} = 0$, implies that 
   \begin{align*}
  \|\bsigma-\bsigma_h\|_\Omega^{2} + \|\lambda-\lambda_h\|_{-1}^{2} &+ \|u-u_h\|_\Omega^{2}  
  \lesssim  
  h^{2} \|(1-\Pi_h^0)f\|_\Omega^{2} +  \min_{v_h\in M_h} \|\widetilde u-v_h\|_\Omega^2  + \|\widetilde u-u\|_1^2 + \|\lambda - \widetilde \lambda\|_{-1}^{2} 
   \\
&\, + \min_{(\btau_h,\mu_h)\in K_h, \, \div\btau_h+\mu_h=-\Pi_h^0f} \{\|\widetilde\bsigma-\btau_h\|_\Omega^2 + \|\mu_h-\lambda\|_{-1}^2 + \ip{\mu_h-\lambda}{u-g}_{\Omega}\}.
  \end{align*}
  Using the inequalities $\|\widetilde u-v_h\|_\Omega \leq \|\widetilde u - u\|_\Omega + \| u - v_h\|_\Omega$ and $\|\widetilde\bsigma-\btau_h\|_\Omega \leq \|\widetilde\bsigma - \bsigma\|_\Omega + \|\bsigma - \btau_h\|_\Omega$, Lemma \ref{lem:auxapriori}, and the arguments elaborated in the proof of Corollary~\ref{cor:convergence} we conclude that
  \begin{align*}
  \|\bsigma-\bsigma_h\|_\Omega^{2} + \|\lambda-\lambda_h\|_{-1}^{2} + \|u-u_h\|_\Omega^{2}  
  &\lesssim h^{2} \|(1-\Pi_h^0)f\|_\Omega^{2} + h^2(\|u\|_2+\|g\|_2+\|f\|_\Omega)^2 + \|\lambda-\Pi_h^0\lambda\|_{-1}^2 \\
  &\lesssim h^2(\|u\|_2+\|g\|_2+\|f\|_\Omega)^2.
  \end{align*}
This finishes the proof.
\end{proof}


\section{Mixed FEM for the plate obstacle problem}\label{sec:plate}
In this section we briefly recall the \emph{plate obstacle} problem with rigid obstacle and propose a suitable mixed variational formulation.
This section follows along section~\ref{sec:obst_problem} with modifications made primarily in the definition of spaces and sets, namely, we redefine, e.g., $\mathsf{V}$, $V$, $K$ and their discrete counterparts. 

Let $\Omega\subset \mathbb{R}^{2}$ be an open and bounded polygonal domain with boundary $\partial\Omega$. Let $f\in L^2(\Omega)$ and $g\in H^{2}(\Omega)\cap C^1(\overline\Omega)$ such that $g < 0$ on $\partial\Omega$. The plate obstacle problem then reads as follows: Find $u$ such that
\begin{equation}\label{eq:plate:obstacle_problem}
\Delta^2 u \geq f \text{ in }\Omega, 
\quad u \geq g \text{ in }\Omega,
\quad (\Delta^2 u - f)(u - g) = 0 \text{ in }\Omega,
\quad u = 0 = \partial_\normal u \text{ on }\partial\Omega.
\end{equation}
Let us introduce the set $\mathsf{V}:=\{v\in H_0^{2}(\Omega) : v \geq g \text{ a.e. in }\Omega\}$. Problem \eqref{eq:obstacle_problem} admits a unique solution $u\in \mathsf{V}$ equivalently characterized by the variational inequality: Find $u\in \mathsf{V}$ such that
\begin{equation}\label{eq:plate:weakformObstacle}
\ip{\Grad\nabla u}{\Grad\nabla (v - u)}_{\Omega} \geq  \ip{f}{v - u}_{\Omega} \quad \forall v\in \mathsf{V};
\end{equation}
see, e.g., \cite{MR2904578} and the references therein. 
The upcoming analysis can be extended to the case where problem \eqref{eq:plate:obstacle_problem} is replaced by
\begin{equation*}
\dDiv\mathcal{C}\Grad\nabla u \geq f \text{ in }\Omega, 
\quad u \geq g \text{ in }\Omega,
\quad (\dDiv\mathcal{C}\Grad\nabla u - f)(u - g) = 0 \text{ in }\Omega,
\quad u = 0 = \partial_\normal u \text{ on }\partial\Omega,
\end{equation*}
where $\mathcal{C}:\mathbb{L}^{2}_{\mathrm{sym}}(\Omega) \to \mathbb{L}^{2}_{\mathrm{sym}}(\Omega)$ denotes a positive
definite isomorphism. However, for the sake of readability, we develop the analysis only for \eqref{eq:plate:obstacle_problem}.


\subsection{Mixed variational formulation}\label{sec:plate:mixed_var_for}

Let $u\in \mathsf{V}$ denote the unique solution to the plate obstacle problem \eqref{eq:plate:obstacle_problem}. 
Define $\bM:=\Grad\nabla u$ and $\lambda:=\Delta^2 u - f \in H^{-2}(\Omega)$. 
We notice that $\lambda$ defines a nonnegative Radon measure, since $\lambda \geq 0$; note also that $\mathrm{supp}(\lambda)\subseteq\{x\in\Omega : u(x) = g(x)\}$.
We can thus rewrite problem \eqref{eq:plate:obstacle_problem} as the following second-order problem: 
\begin{align}\label{eq:plate:strong_mixed_obst}
\left\{
\begin{array}{rcll}
\dDiv\bM - \lambda & = & f &\text{ in } \Omega, \\
\bM - \Grad\nabla u & = & 0 &\text{ in }\Omega,\\
\lambda & \geq & 0 &\text{ in }\Omega, \\
u &\geq& g &\text{ in }\Omega,\\
\lambda(u - g) & = & 0 &\text{ in }\Omega,\\
u & = & 0 &\text{ on }\partial\Omega, \\
\partial_\normal u & = & 0 &\text{ on }\partial\Omega.
\end{array}
\right.
\end{align}
Let $M:=L^2(\Omega)$, $V:=V_{2}$, and $K:=K_{2}$, where $V_{2}$ and $K_{2}$ are defined in \eqref{def:space_V} and \eqref{def:cone_K}, respectively. We introduce the linear functions
\begin{align*}
F:M\to \mathbb{R}, \quad F(v):= -\ip{f}{v}_{\Omega}, 
\quad \text{and} \quad
G:K \to \mathbb{R}, \quad G((\bN,\mu)):= \int_{\Omega} g\,\mathrm{d}\mu.
\end{align*}
Using $\mu\geq 0$ and the properties of $M_\varepsilon$ for sufficiently small $\varepsilon$ (Proposition~\ref{prop:max_h02}) we find that
\begin{align*}
  \int_{\Omega}g\, \mathrm{d}\mu \leq \int_{\Omega}M_{\varepsilon}\{g,0\}\mathrm{d}\mu = \langle \mu, M_{\varepsilon}\{g,0\}\rangle \leq \|\mu\|_{-2}\|M_{\varepsilon}\{g,0\}\|_{2}.
\end{align*}
Define the continuous bilinear forms $a : V \times V \to \mathbb{R}$ and $b : V \times M \to \mathbb{R}$  by
\begin{equation*}
  a((\bN_{1},\mu_{1}), (\bN_{2},\mu_{2})) := (\bN_{1},\bN_{2})_{\Omega},\qquad
  b((\bN,\mu), v)  :=  \ip{\dDiv\bN-\mu}{v}_{\Omega}.
\end{equation*}
Based on the previous ingredients, we derive a mixed variational formulation for problem \eqref{eq:plate:strong_mixed_obst}: 
First, given $\bN\in \bbLtwosym(\Omega)$, test the second equation with $\bN - \bM$ and integrate by parts to arrive at
\begin{equation}\label{eq:plate:identity_2nd_eq}
  \ip{\bM}{\bN - \bM}_\Omega - \dual{\dDiv(\bN - \bM)}{u} = 0. 
\end{equation}
Second, $u\geq g$ and $\lambda(u-g) = 0$ yield $\int_{\Omega}(u-g)\mathrm{d}(\mu-\lambda)\geq 0$ for all $\mu\in H^{-2}(\Omega)$ with $\mu\geq 0$. This and \eqref{eq:plate:identity_2nd_eq} give
\begin{align*}
  \ip{\bM}{\bN - \bM}_\Omega - \dual{\dDiv(\bN-\bM)-(\mu-\lambda)}{u} \geq  \int_{\Omega}g\mathrm{d}(\mu-\lambda)
\end{align*}
for all $\bN\in \bbLtwosym(\Omega), \, \mu\in H^{-2}(\Omega), \, \mu\geq 0$. 
Finally, testing the first equation in~\eqref{eq:plate:strong_mixed_obst} with $v\in M$ and restricting the test functions $(\bN,\mu)$ to $K$, we end up with the following mixed formulation: 
Find $(\bM,\lambda,u)\in K\times M$ such that
\begin{equation}\label{eq:plate:weak_mixed}
\left\{
\begin{array}{lcl}
a((\bM,\lambda), (\bN,\mu) - (\bM, \lambda)) - b((\bN,\mu) - (\bM, \lambda), u) & \geq & G((\bN,\mu) - (\bM, \lambda)) \\
-b((\bM, \lambda), v) & = &  F(v)
\end{array}
\right.
\end{equation}
for all $(\bN,\mu,v)\in K\times M$. In what follows we study the well-posedness of \eqref{eq:plate:weak_mixed}. 

\begin{proposition}[equivalence]
  Problems~\eqref{eq:plate:weakformObstacle} and~\eqref{eq:plate:weak_mixed} are equivalent in the following sense: 
  If $u\in \mathsf{V}$ denotes the unique solution of~\eqref{eq:plate:weakformObstacle}, then $(\Grad\nabla u,\Delta^2 u-f,u)\in K\times M$ solves~\eqref{eq:plate:weak_mixed}. 
  Reciprocally, if $(\bM,\lambda,u)\in K\times M$ solves~\eqref{eq:plate:weak_mixed}, then $u\in \mathsf{V}$ and solves~\eqref{eq:plate:weakformObstacle}. Moreover, $\bM = \Grad\nabla u$, and $\lambda=\Delta^{2} u-f$. 
  In particular, formulation~\eqref{eq:plate:weak_mixed} admits a unique solution.
\end{proposition}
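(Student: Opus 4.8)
The plan is to mirror the proof of Proposition~\ref{prop:equivalent_obstacle} almost verbatim, replacing $\div$ by $\dDiv$, $\nabla u$ by $\Grad\nabla u$, $\Hdivset\Omega$ by $\mathbb{H}(\dDiv;\Omega)$, and $H_0^1(\Omega)$ by $H_0^2(\Omega)$ throughout. For the forward implication: if $u\in\mathsf{V}$ solves~\eqref{eq:plate:weakformObstacle}, then setting $\bM:=\Grad\nabla u\in\bbLtwosym(\Omega)$ and $\lambda:=\Delta^2 u-f\in H^{-2}(\Omega)$ (a nonnegative Radon measure, as noted above) one has $\dDiv\bM-\lambda=f\in L^2(\Omega)$ and $\lambda\geq 0$, so $(\bM,\lambda,u)\in K\times M$, and the derivation of~\eqref{eq:plate:weak_mixed} carried out above shows it solves~\eqref{eq:plate:weak_mixed}.

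For the converse, let $(\bM,\lambda,u)\in K\times M$ solve~\eqref{eq:plate:weak_mixed}. First I would insert the test functions $(\bN,\mu)=(\bM\pm\widetilde{\bN},\lambda)$ with $\widetilde{\bN}\in\mathbb{H}(\dDiv;\Omega)$ arbitrary, which are admissible since $\dDiv(\bM\pm\widetilde{\bN})-\lambda=(\dDiv\bM-\lambda)\pm\dDiv\widetilde{\bN}\in L^2(\Omega)$; the first line of~\eqref{eq:plate:weak_mixed} then reduces to $\ip{\bM}{\widetilde{\bN}}_\Omega-\ip{\dDiv\widetilde{\bN}}{u}_\Omega=0$ for every $\widetilde{\bN}\in\mathbb{H}(\dDiv;\Omega)$, which (as in the membrane case) forces $u\in H_0^2(\Omega)$ with $\Grad\nabla u=\bM$. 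Next, given $\mu\in H^{-2}(\Omega)$ with $\mu\geq 0$, I would use surjectivity of $\dDiv\colon\bbLtwosym(\Omega)\to H^{-2}(\Omega)$ to pick $\bN\in\bbLtwosym(\Omega)$ with $\dDiv\bN-\mu\in L^2(\Omega)$, so that $(\bN,\mu)\in K$; inserting this into the first line of~\eqref{eq:plate:weak_mixed} and invoking identity~\eqref{eq:plate:identity_2nd_eq} together with $\bM=\Grad\nabla u$, all bilinear-form contributions cancel and one is left with $\int_\Omega(u-g)\,\mathrm{d}(\mu-\lambda)\geq 0$ for all such $\mu$. Taking $\mu=2\lambda$ and then $\mu=0$ gives $\int_\Omega(u-g)\,\mathrm{d}\lambda=0$, and testing against $\mu=\phi\,\mathrm{d}x$ with $\phi\in C_0^\infty(\Omega)$, $\phi\geq 0$, yields $u\geq g$ a.e.\ in $\Omega$; hence $u\in\mathsf{V}$.

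It then remains to identify $\lambda$ and to recover~\eqref{eq:plate:weakformObstacle}. Testing the second line $-b((\bM,\lambda),v)=F(v)$ with $v\in M=L^2(\Omega)$ gives $\dDiv\bM-\lambda=f$ a.e., i.e.\ $\lambda=\dDiv\Grad\nabla u-f=\Delta^2 u-f$. Finally, for any $v\in\mathsf{V}$ one has $\int_\Omega(v-g)\,\mathrm{d}\lambda\geq 0$ because $\lambda\geq 0$ and $v\geq g$; subtracting $\int_\Omega(u-g)\,\mathrm{d}\lambda=0$ and using $\dual{\lambda}{v-u}=\int_\Omega(v-u)\,\mathrm{d}\lambda$ and $\dual{\Delta^2 u}{v-u}=\ip{\Grad\nabla u}{\Grad\nabla(v-u)}_\Omega$ leads to $\ip{\Grad\nabla u}{\Grad\nabla(v-u)}_\Omega\geq\ip{f}{v-u}_\Omega$, which is~\eqref{eq:plate:weakformObstacle}. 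Uniqueness of $(\bM,\lambda,u)$ then follows from uniqueness of the solution to~\eqref{eq:plate:weakformObstacle} together with the identities $\bM=\Grad\nabla u$ and $\lambda=\Delta^2 u-f$.

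The only step that genuinely differs from the membrane case, and which I expect to be the main obstacle, is deducing $u\in H_0^2(\Omega)$ and $\Grad\nabla u=\bM$ from $\ip{\bM}{\widetilde{\bN}}_\Omega=\ip{\dDiv\widetilde{\bN}}{u}_\Omega$ for all $\widetilde{\bN}\in\mathbb{H}(\dDiv;\Omega)$. Testing against compactly supported smooth $\widetilde{\bN}$ immediately gives $\Grad\nabla u=\bM$ in $\mathcal{D}'(\Omega)$, hence (since $u,\bM\in L^2(\Omega)$) $u\in H^2(\Omega)$; the delicate point is that the second-order Green's formula for $\dDiv$ carries two boundary contributions, associated with the traces $u|_{\partial\Omega}$ and $\partial_\normal u|_{\partial\Omega}$, and one must check that the identity above forces both to vanish, so that $u\in H_0^2(\Omega)$. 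This is where the density result for $V_2$ and the integration-by-parts/trace properties of $\mathbb{H}(\dDiv;\Omega)$ from~\cite{2023arXiv230508693F} enter; the remainder of the argument is a routine transcription of the membrane proof.
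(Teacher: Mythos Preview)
Your proposal is correct and follows essentially the same approach as the paper's proof: the paper also tests with $(\bM\pm\widetilde{\bN},\lambda)$ for $\widetilde{\bN}\in\mathbb{H}(\dDiv;\Omega)$ to obtain $u\in H_0^2(\Omega)$ with $\bM=\Grad\nabla u$, then uses surjectivity of $\dDiv$ to deduce $\int_\Omega(u-g)\,\mathrm{d}(\mu-\lambda)\geq 0$, and finally refers back to the membrane argument (Proposition~\ref{prop:equivalent_obstacle}) for the remaining steps. Your write-up is in fact slightly more detailed than the paper's (you spell out the choices $\mu=2\lambda$, $\mu=0$, and $\mu=\phi\,\mathrm{d}x$ explicitly), and you correctly single out the characterization of $H_0^2(\Omega)$ via duality with $\mathbb{H}(\dDiv;\Omega)$ as the one step requiring genuinely new input beyond the membrane case.
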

\begin{proof}
  Let $u\in \mathsf{V}$ solve~\eqref{eq:plate:weakformObstacle}. By construction $(\bM,\lambda,u) := (\Grad\nabla u,\Delta^{2} u-f,u)\in K\times M$ is a solution of~\eqref{eq:plate:weak_mixed}.

  Conversely, let $(\bM,\lambda,u)\in K\times M$ be a solution of~\eqref{eq:plate:weak_mixed}. Choosing $(\bM \pm \widetilde{\bN},\lambda,0)$ with $\widetilde{\bN}\in \mathbb{H}(\dDiv;\Omega)$ as test function we get that $\ip{\bM}{\widetilde{\bN}}_\Omega - \ip{\dDiv\widetilde{\bN}}{u}_\Omega = 0$ for all $\widetilde{\bN}\in \mathbb{H}(\dDiv;\Omega)$. This identity yields that
  \begin{align*}
    u\in H_0^2(\Omega) \quad\text{with}\quad
    \bM = \Grad\nabla u. 
  \end{align*}
  Since the operator $\dDiv\colon \mathbb{L}_{\text{sym}}^2(\Omega) \to H^{-2}(\Omega)$ is surjective, given $\mu\in H^{-2}(\Omega)$ satisfying $\mu\geq 0$, we choose $\bN\in \mathbb{L}_{\text{sym}}^2(\Omega)$ such that $\dDiv\bN -\mu\in L^2(\Omega)$.  
  Then, using that $\bM = \Grad\nabla u$, we obtain the identity $\ip{\bM}{\bN-\bM} - \dual{\dDiv(\bN-\bM)}{u} = 0$. 
  Utilizing the latter in the first equation of \eqref{eq:plate:weak_mixed} gives $\int_{\Omega}(u-g)\mathrm{d}(\mu-\lambda)\geq 0$ for all $\mu\in H^{-2}(\Omega)$ with $\mu\geq 0$, which implies that $\int_{\Omega}(u-g)\mathrm{d}\lambda = 0$ and $u\geq g$. 
  Consequently, $u\in\mathsf{V}$. To prove that $u$ satisfies inequality \eqref{eq:plate:weakformObstacle}, we follow the arguments elaborated in the proof of Proposition \ref{prop:equivalent_obstacle}. This concludes the proof.
\end{proof}


\subsection{Finite element approximation}\label{sec:plate:fe_approx}

Based on the notation introduced in section \ref{sec:fem_and_interp}, we define the discrete spaces $M_{h}:= \mathcal{P}^{1}(\cT)$ and $V_{h}:= \mathbb{X}(\cT)\times M_{h}$, and the set
$$
K_{h}:=\{(\bN_{h},\mu_{h}) \in V_{h} : \mu_{h}|_{T} \geq 0 \quad  \forall T\,\in \cT\}.
$$
We propose the following finite element approximation for \eqref{eq:plate:weak_mixed}:
Find $(\bM_{h},\lambda_{h},u_{h})\in K_{h}\times M_{h}$ such that
\begin{equation}\label{eq:plate:weak_mixed_discrete}
\left\{
\begin{array}{lcl}
a((\bM_{h},\lambda_{h}), (\bN_{h},\mu_{h}) - (\bM_{h}, \lambda_{h})) - b((\bN_{h},\mu_{h}) - (\bM_{h}, \lambda_{h}), u_{h})  \! & \geq & \!  G((\bN_{h},\mu_{h}) - (\bM_{h}, \lambda_{h})) \\
-b((\bM_{h}, \lambda_{h}), v_{h})  \! & = & \!   F(v_{h})
\end{array}
\right.
\end{equation}
for all $(\bN_{h},\mu_{h},v_{h})\in K_{h} \times M_{h}$.

\begin{theorem}[well-posedness]
  Problem \eqref{eq:plate:weak_mixed_discrete} is well posed.
\end{theorem}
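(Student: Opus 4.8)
The argument parallels the proof of well-posedness for the membrane discretization, so I only point out the changes. Writing~\eqref{eq:plate:weak_mixed_discrete} in the abstract form~\eqref{eq:generalized_var_ineq_discrete} with $\mathfrak a=a$, $\mathfrak b=-b$, $\mathfrak G=G$, $\mathfrak F=F$, $\boldsymbol\Sigma_h=(\bM_h,\lambda_h)$, and $\mathfrak u_h=u_h$, it suffices to verify the four hypotheses of Theorem~\ref{thm:wp_discrete_generalized}; the overall sign in front of $b$ plays no role, since $\mathbb{X}(\cT)$ and $V_h\subseteq V_2$ are linear spaces invariant under $\bN_h\mapsto-\bN_h$. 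Hypothesis~\eqref{eq:assumption_non_negativity_a} holds trivially because $a((\bN,\mu),(\bN,\mu))=\|\bN\|_\Omega^2\geq0$. For~\eqref{eq:assumption_coercivity_a}, note that
\[
  N=\{(\bN,\mu)\in V_2 \,:\, \dDiv\bN-\mu=0 \text{ a.e. in }\Omega\},
\]
hence for $(\bN,\mu)\in N$ one has $a((\bN,\mu),(\bN,\mu))=\|\bN\|_\Omega^2=\|\bN\|_\Omega^2+\|\dDiv\bN-\mu\|_\Omega^2=\|(\bN,\mu)\|_{V_2}^2$, i.e.\ one may take $\alpha=1$.

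For the conforming condition~\eqref{eq:conforming_condition} I would invoke the structural property $\dDiv\,\mathbb{X}(\cT)\subseteq\mathcal{P}^1(\cT)$, which holds elementwise since $\mathbb{X}(T)=\mathrm{sym}(\RT^0(T)\otimes\RT^1(T))$ consists of matrix-valued polynomials of bounded degree and $\dDiv$ is a second-order operator (this is part of the construction in~\cite{2023arXiv230508693F}, consistent with $\dDiv\circ\Pi_h^{\dDiv}=\Pi_h^1\circ\dDiv$). Then $\dDiv\bN_h-\mu_h\in\mathcal{P}^1(\cT)=M_h$ for every $(\bN_h,\mu_h)\in V_h$, so that $b((\bN_h,\mu_h),v_h)=0$ for all $v_h\in M_h$ forces $\dDiv\bN_h-\mu_h=0$ a.e.\ in $\Omega$, and therefore $N_h\subseteq N$.

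The remaining and, I expect, main obstacle is the discrete inf-sup condition~\eqref{eq:discrete_inf_sup}. I would take $W_h=\mathbb{X}(\cT)\times\{0\}\subseteq V_h$, which satisfies $W_h\subseteq K_h$ because its elements have vanishing (hence nonnegative) second component. Since $\|(\bN_h,0)\|_{V_2}=\|\bN_h\|_{\mathbb{H}(\dDiv;\Omega)}$, the condition reduces to
\[
  \sup_{\bN_h\in\mathbb{X}(\cT)}\frac{\ip{\dDiv\bN_h}{v_h}_\Omega}{\|\bN_h\|_{\mathbb{H}(\dDiv;\Omega)}}\;\geq\;\tilde\beta\,\|v_h\|_\Omega\qquad\forall\,v_h\in\mathcal{P}^1(\cT),
\]
with $\tilde\beta>0$ independent of $h$. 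Unlike the Raviart--Thomas situation of the membrane problem, this is not a classical estimate: it is precisely the uniform stability of the pair $(\mathbb{X}(\cT),\mathcal{P}^1(\cT))$ for the operator $\dDiv$.

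I would establish this by a Fortin-type argument: given $v_h\in\mathcal{P}^1(\cT)$, choose a sufficiently smooth lift $\bN$ with $\dDiv\bN=v_h$ and $\|\bN\|_{\mathbb{H}(\dDiv;\Omega)}\lesssim\|v_h\|_\Omega$ (for instance $\bN=\Grad\nabla w$ with $w$ solving the clamped biharmonic problem $\Delta^2 w=v_h$, noting $\dDiv\Grad\nabla w=\Delta^2 w$), set $\bN_h=\Pi_h^{\dDiv}\bN$, and use $\dDiv\circ\Pi_h^{\dDiv}=\Pi_h^1\circ\dDiv$ together with the boundedness of $\Pi_h^{\dDiv}$ to get $\dDiv\bN_h=v_h$ and $\|\bN_h\|_{\mathbb{H}(\dDiv;\Omega)}\lesssim\|v_h\|_\Omega$, whence $\ip{\dDiv\bN_h}{v_h}_\Omega=\|v_h\|_\Omega^2\gtrsim\|\bN_h\|_{\mathbb{H}(\dDiv;\Omega)}\|v_h\|_\Omega$. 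The delicate point is matching the regularity of the lift $\bN$ with the domain of $\Pi_h^{\dDiv}$ on general polygonal domains, which is exactly what is handled in~\cite{2023arXiv230508693F}; I would cite that work for the uniform inf-sup constant. Having verified all four hypotheses, Theorem~\ref{thm:wp_discrete_generalized} yields existence and uniqueness, which concludes the proof.
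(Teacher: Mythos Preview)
Your proof is correct and follows essentially the same approach as the paper: verify the four hypotheses of Theorem~\ref{thm:wp_discrete_generalized} with $W_h=\mathbb{X}(\cT)\times\{0\}$, invoking the discrete inf-sup stability of the pair $(\mathbb{X}(\cT),\mathcal{P}^1(\cT))$ from~\cite{2023arXiv230508693F}. Your treatment is slightly more explicit---you spell out why $N_h\subseteq N$ via $\dDiv\,\mathbb{X}(\cT)\subseteq\mathcal{P}^1(\cT)$ and sketch the Fortin argument underlying the cited inf-sup result---but the structure and the key ingredients coincide with the paper's proof.
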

\begin{proof}
We verify all the assumptions of Theorem \ref{thm:wp_discrete_generalized}. 
We first note that~\eqref{eq:assumption_non_negativity_a} holds, since
\begin{equation*}
  a((\bN,\mu),(\bN,\mu))  = \|\bN\|_{\Omega}^2 \geq 0 \qquad \forall \,(\bN,\mu)\in V.
\end{equation*}
On the other hand, we note that $N=\{(\bN,\mu)\in V :  b((\bN,\mu),v) = 0 \quad \forall \,v\in M\} =\{(\bN,\mu)\in V :  \dDiv \bN - \mu = 0 \text{ a.e.  in }\Omega\}$. It thus follows that 
\begin{align*}
a((\bN,\mu),(\bN,\mu)) = \|\bN\|_{\Omega}^2 = \|\bN\|_{\Omega}^2 + \|\dDiv \bN - \mu\|_{\Omega}^{2} = \|(\bN,\mu)\|_{V}^{2}  \qquad \forall \,(\bN,\mu)\in N,
\end{align*}
which shows \eqref{eq:assumption_coercivity_a}. Assumption \eqref{eq:conforming_condition} follows from the fact that $N_{h}=\{(\bN_{h},\mu_{h})\in V_{h}: b((\bN_{h},\mu_{h}),v_{h}) =  0 \quad \forall \, v\in M_{h}\} = \{(\bN_{h},\mu_{h})\in V_{h}:  \dDiv \bN_{h} - \mu_{h} = 0 \text{ a.e.  in }\Omega\} \subset N$.
Finally, we consider the subspace $W_{h} = \mathbb{X}(\cT)\times \{0\} \subset V_{h}$; note that $W_{h} \subset K_{h}$. 
From \cite[Proof of Theorem 10]{2023arXiv230508693F}, it follows the discrete inf-sup condition
\begin{equation*}
  \sup_{\bN_{h}\in \mathbb{X}(\cT)}\frac{(\dDiv \bN_{h}, v_{h})_{\Omega}}{\|\bN_{h}\|_{\mathbb{H}(\dDiv;\Omega)}} \geq \hat{\beta} \|v_{h}\|_{M} \quad \forall \, v_{h}\in M_{h}, 
\end{equation*}
with $\hat{\beta} > 0$ independent of the discretization parameter $h$.
This shows that assumption \eqref{eq:discrete_inf_sup} is valid, and concludes the proof.
\end{proof}

We present some interesting properties of the mixed scheme~\eqref{eq:plate:weak_mixed_discrete}.

  \begin{proposition}[properties of the solution]\label{prop:plate:properties}
  Let $(\bM_h,\lambda_h,u_h)\in K_h\times M_h$ denote the solution to~\eqref{eq:plate:weak_mixed_discrete}. It possesses the following properties:
  \begin{itemize}
    \item[$\mathrm{(i)}$] $\dDiv\bM_h - \lambda_h = \Pi_h^1 f$, 
    \item[$\mathrm{(ii)}$] $\ip{\bM_h}{\bN_h}_\Omega - \ip{\dDiv\bN_h}{u_h}_\Omega = 0$ for all $\bN_h\in \mathbb{X}(\cT)$, and
    \item[$\mathrm{(iii)}$] $\int_{\Omega}(u_h-g)\mathrm{d}\lambda_{h} = (\lambda_{h}, u_{h} - g)_{\Omega}  = 0$.
  \end{itemize}
\end{proposition}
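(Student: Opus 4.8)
The plan is to mirror the proof of Proposition~\ref{prop:properties} line by line, with $\div$ replaced by $\dDiv$, $\RT^0(\cT)$ by $\mathbb{X}(\cT)$, and $\mathcal{P}^0(\cT)$ by $\mathcal{P}^1(\cT)$. The only structural difference with the membrane case is that the discrete reaction force $\lambda_h$ now lives in the higher-order space $\mathcal{P}^1(\cT)$; this is why there is no analogue of property~(iv) of Proposition~\ref{prop:properties} (a discrete sign condition on $u_h-g$) and only the complementarity~(iii) survives.

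For~(i), I would test the second equation of~\eqref{eq:plate:weak_mixed_discrete} with $v_h\in M_h=\mathcal{P}^1(\cT)$, obtaining $\ip{\dDiv\bM_h-\lambda_h-f}{v_h}_\Omega=0$ for all $v_h\in\mathcal{P}^1(\cT)$. Since $\bM_h\in\mathbb{X}(\cT)$ implies $\dDiv\bM_h\in\mathcal{P}^1(\cT)$ by the construction of $\mathbb{X}(\cT)$ (cf.~\cite{2023arXiv230508693F}) and $\lambda_h\in\mathcal{P}^1(\cT)$, the function $\dDiv\bM_h-\lambda_h$ already lies in $M_h$, so this orthogonality identifies it with the $L^2$-projection $\Pi_h^1 f$.

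For~(ii), since $\lambda_h\geq 0$ elementwise, $(\bM_h\pm\widetilde{\bN}_h,\lambda_h)\in K_h$ for every $\widetilde{\bN}_h\in\mathbb{X}(\cT)$; inserting these (with $v_h=0$) into the first inequality of~\eqref{eq:plate:weak_mixed_discrete} and using $G((\widetilde{\bN}_h,0))=0$ gives $\pm[\ip{\bM_h}{\widetilde{\bN}_h}_\Omega-\ip{\dDiv\widetilde{\bN}_h}{u_h}_\Omega]\geq 0$, hence equality, which is~(ii). Then for~(iii) I would feed~(i) and~(ii) back into the first inequality: the $a$-term together with the $\dDiv$-part of the $b$-term cancels by~(ii) (because $\bN_h-\bM_h\in\mathbb{X}(\cT)$) and the $f$-contribution drops out by~(i), leaving $\ip{\mu_h-\lambda_h}{u_h-g}_\Omega\geq 0$ for all $\mu_h\in M_h$ with $\mu_h\geq 0$. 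Choosing $\mu_h=2\lambda_h$ and then $\mu_h=0$ yields $\ip{\lambda_h}{u_h-g}_\Omega=0$.

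The only point requiring care --- more bookkeeping than obstacle --- is that all the measure pairings $\int_\Omega(\cdot)\,\mathrm{d}\lambda_h$ reduce to ordinary $L^2$ inner products because $\lambda_h$ is a piecewise polynomial, hence a genuine nonnegative $L^2$-function; in particular $\int_\Omega g\,\mathrm{d}(\mu_h-\lambda_h)=\ip{g}{\mu_h-\lambda_h}_\Omega$ needs only $g\in L^2(\Omega)$, which is implied by $g\in H^2(\Omega)\cap C^1(\overline\Omega)$, and the first identity in~(iii) is just $\mathrm{d}\lambda_h=\lambda_h\,\mathrm{d}x$. No new ideas beyond those in the membrane case are needed.
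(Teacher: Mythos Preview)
Your proposal is correct and follows essentially the same route as the paper's proof: (i) from the second equation together with $\dDiv(\mathbb{X}(\cT))\subset\mathcal{P}^1(\cT)$, (ii) by testing with $(\bM_h\pm\widetilde\bN_h,\lambda_h)$, and (iii) by reducing the first inequality to $\ip{\mu_h-\lambda_h}{u_h-g}_\Omega\geq0$ and choosing $\mu_h=2\lambda_h,\,0$. One small remark: there is no ``$f$-contribution'' in the first line of~\eqref{eq:plate:weak_mixed_discrete}, so (i) is not actually needed for the simplification leading to (iii)---(ii) alone suffices once you test with $(\bN_h,\mu_h)\in K_h$ (or simply fix $\bN_h=\bM_h$); the paper also cites (i) there, but only for narrative convenience.
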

\begin{proof}
  Property $\mathrm{(i)}$ follows from the second equation in~\eqref{eq:plate:weak_mixed_discrete} and the fact that $\dDiv(\mathbb{X}(\cT)) = \mathcal{P}^{1}(\cT) = M_h$. 
  To prove $\mathrm{(ii)}$, we take $(\bN_h,\mu_h,v_h)=(\bM_{h}\pm\widetilde{\bN}_h,\lambda_h,0)$ in~\eqref{eq:plate:weak_mixed_discrete} with $\widetilde{\bN}_h\in\mathbb{X}(\cT)$ arbitrary. 
This gives
  \begin{align*}
    \pm\ip{\bM_h}{\widetilde{\bN}_h}_\Omega \mp \ip{\dDiv\widetilde{\bN}_h}{u_h}_{\Omega} \geq 0 \quad\forall \widetilde{\bN}_h\in\mathbb{X}(\cT), 
  \end{align*}
  from which we infer $\mathrm{(ii)}$.
  Using $\mathrm{(i)}$, $\mathrm{(ii)}$, and that $\mu_{h},\lambda_{h}\in L^{2}(\Omega)$, we see that~\eqref{eq:plate:weak_mixed_discrete} simplifies to
  \begin{align*}
  \int_{\Omega}(u_h-g)\mathrm{d}(\mu_h-\lambda_h) = \ip{\mu_{h} - \lambda_{h}}{u_{h} - g}_{\Omega} \geq 0 \quad\forall \mu_h\in M_h, \, \mu_h\geq 0.
  \end{align*}
  In the last inequality, choose $\mu_h = 2\lambda_h$ and then $\mu_h = 0$ resulting in $\pm\ip{\lambda_{h}}{u_h-g}_{\Omega} \geq 0$ or, equivalently, $\ip{\lambda_{h}}{u_h-g}_{\Omega} = 0$, which proves $\mathrm{(iii)}$. 
  \end{proof}
  
  \begin{theorem}[a priori estimate]\label{thm:plate:bestapprox}
Let $(\bM,\lambda,u)\in K\times M$ be the unique solution to \eqref{eq:plate:weak_mixed} and let $(\bM_{h},\lambda_{h},u_{h})\in K_{h}\times M_{h}$ be its finite element approximation obtained as the unique solution to \eqref{eq:plate:weak_mixed_discrete}. 
Then, we have that:
\begin{align*}
\|(\bM,\lambda,u) - &(\bM_h,\lambda_h,u_h)\|_{V\times M}^{2}
\lesssim \min_{v_{h}\in M_{h}}\|u - v_{h}\|_{\Omega}^{2}  \\
&+ \min_{(\bN_{h},\mu_{h})\in K_{h}}\left\{\|\bM - \bN_{h}\|_{\Omega}^{2} + \|\dDiv(\bM - \bN_{h}) - (\lambda - \mu_{h})\|_{\Omega}^{2} + \int_{\Omega}(u - g)\mathrm{d}(\mu_{h} - \lambda)\right\}.
\end{align*}
\end{theorem}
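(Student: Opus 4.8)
The plan is to follow the three-step structure of the proof of Theorem~\ref{thm:bestapprox}, replacing $\div$ by $\dDiv$ and the pair $(M_h,V_h)=(\mathcal{P}^0(\cT),\RT^0(\cT)\times\mathcal{P}^0(\cT))$ by $(M_h,V_h)=(\mathcal{P}^1(\cT),\mathbb{X}(\cT)\times\mathcal{P}^1(\cT))$, while keeping track of the sign change in the first line of~\eqref{eq:plate:weak_mixed} (the term $-b$ rather than $+b$). Two facts are used throughout: first, $K_h\subset K$, since $\mu_h\geq 0$ on every $T\in\cT$ forces $\dual{\mu_h}{v}=\ip{\mu_h}{v}_\Omega\geq 0$ for all $v\in H_0^2(\Omega)$ with $v\geq 0$, so $(\bM_h,\lambda_h)$ is admissible in~\eqref{eq:plate:weak_mixed}; second, $\dual{\dDiv\bN}{w}=\ip{\bN}{\Grad\nabla w}_\Omega$ for $\bN\in\bbLtwosym(\Omega)$, $w\in H_0^2(\Omega)$, which combined with $\bM=\Grad\nabla u$ plays the role of the integration by parts in the membrane proof (note $\bM$ itself need not lie in $\mathbb{H}(\dDiv;\Omega)$, so $\dDiv$ must be read distributionally here).

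\textbf{Step~1} ($\|u-u_h\|_\Omega$). I would split $\|u-u_h\|_\Omega\leq\|u-\Pi_h^1u\|_\Omega+\|\Pi_h^1u-u_h\|_\Omega$. Rather than imitating the membrane duality argument (a biharmonic dual problem is not regular enough to apply $\Pi_h^{\dDiv}$ to its Hessian), I would use that the discrete inf-sup condition for $(\mathbb{X}(\cT),\mathcal{P}^1(\cT))$ yields, for $w_h:=\Pi_h^1u-u_h$, some $\bN_h\in\mathbb{X}(\cT)$ with $\dDiv\bN_h=w_h$ and $\|\bN_h\|_\Omega\lesssim\|w_h\|_\Omega$; then property~$\mathrm{(ii)}$ of Proposition~\ref{prop:plate:properties} together with $\ip{\dDiv\bN_h}{u}_\Omega=\dual{\dDiv\bN_h}{u}=\ip{\bN_h}{\bM}_\Omega$ gives $\|w_h\|_\Omega^2=\ip{\dDiv\bN_h}{u-u_h}_\Omega=\ip{\bN_h}{\bM-\bM_h}_\Omega\lesssim\|\bM-\bM_h\|_\Omega\|w_h\|_\Omega$, hence $\|u-u_h\|_\Omega^2\lesssim\min_{v_h\in M_h}\|u-v_h\|_\Omega^2+\|\bM-\bM_h\|_\Omega^2$.

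\textbf{Step~2} ($\|\bM-\bM_h\|_\Omega$). I would introduce $c\{(\bN_1,\mu_1,v_1),(\bN_2,\mu_2,v_2)\}:=\ip{\bN_1}{\bN_2}_\Omega-\ip{\dDiv\bN_2-\mu_2}{v_1}_\Omega$ and write $\|\bM-\bM_h\|_\Omega^2=\mathsf{I}+\mathsf{II}$, where $\mathsf{I}$ is the $c$-form on the error triple and $\mathsf{II}=\ip{\dDiv(\bM-\bM_h)-(\lambda-\lambda_h)}{u-u_h}_\Omega$. For $\mathsf{II}$ one uses $\dDiv\bM-\lambda=f$ and $\dDiv\bM_h-\lambda_h=\Pi_h^1f$ (Proposition~\ref{prop:plate:properties}\,$\mathrm{(i)}$), so $\dDiv(\bM-\bM_h)-(\lambda-\lambda_h)=f-\Pi_h^1f$, and orthogonality of $\Pi_h^1$ gives $\mathsf{II}\leq\tfrac12\|f-\Pi_h^1f\|_\Omega^2+\tfrac12\|u-\Pi_h^1u\|_\Omega^2$, with $\|f-\Pi_h^1f\|_\Omega=\min_{(\bN_h,\mu_h)\in V_h}\|\dDiv(\bM-\bN_h)-(\lambda-\mu_h)\|_\Omega$. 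For $\mathsf{I}$, fix $(\bN_h,\mu_h,v_h)\in K_h\times M_h$ and decompose $\mathsf{I}=\mathsf{I}_1-\mathsf{I}_2-\mathsf{I}_3$ as in the membrane proof: testing~\eqref{eq:plate:weak_mixed} with $(\bM_h,\lambda_h)\in K$ bounds $\mathsf{I}_1\leq\int_\Omega g\,\mathrm{d}(\lambda-\lambda_h)$; testing~\eqref{eq:plate:weak_mixed_discrete} with $(\bN_h,\mu_h)$ bounds $\mathsf{I}_3\geq\ip{\mu_h-\lambda_h}{g}_\Omega$; and the identity $c\{(\bM,\lambda,u),(\bM-\bN_h,\lambda-\mu_h,u-v_h)\}=\dual{\lambda-\mu_h}{u}$ (from $\dual{\dDiv(\bM-\bN_h)}{u}=\ip{\bM-\bN_h}{\bM}_\Omega$ and $\bM=\Grad\nabla u$) rewrites $-\mathsf{I}_2$ as a $c$-product of errors minus $\dual{\lambda-\mu_h}{u}$. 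Collecting the three pieces and using $\int_\Omega g\,\mathrm{d}(\lambda-\lambda_h)-\ip{\mu_h-\lambda_h}{g}_\Omega-\dual{\lambda-\mu_h}{u}=\int_\Omega(u-g)\,\mathrm{d}(\mu_h-\lambda)$ gives $\mathsf{I}\leq\|\bM-\bM_h\|_\Omega\|\bM-\bN_h\|_\Omega+\|u-u_h\|_\Omega\|\dDiv(\bM-\bN_h)-(\lambda-\mu_h)\|_\Omega+\int_\Omega(u-g)\,\mathrm{d}(\mu_h-\lambda)$. Inserting the Step~1 bound for $\|u-u_h\|_\Omega$, applying Young's inequality with a small $\delta>0$, absorbing $\delta\|\bM-\bM_h\|_\Omega^2$, and minimizing over $(\bN_h,\mu_h)\in K_h$ and $v_h\in M_h$ yields the claimed bound for $\|\bM-\bM_h\|_\Omega^2$.

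\textbf{Step~3 and main obstacle.} Since $\|\dDiv(\bM-\bM_h)-(\lambda-\lambda_h)\|_\Omega=\|f-\Pi_h^1f\|_\Omega\leq\min_{(\bN_h,\mu_h)\in K_h}\|\dDiv(\bM-\bN_h)-(\lambda-\mu_h)\|_\Omega$, combining this with Steps~1 and~2 completes the proof. The delicate point is Step~1: the membrane duality argument does not transfer verbatim, because a biharmonic dual problem with $L^2$ data on a polygon lives only in $H^{2+\gamma}(\Omega)$ with possibly small $\gamma$, far from the $H^{3+r}(\Omega)$, $r>1/2$, needed for $\Pi_h^{\dDiv}$ to act on its Hessian; the substitute above uses only surjectivity of $\dDiv\colon\mathbb{X}(\cT)\to\mathcal{P}^1(\cT)$ with a stable right inverse (the discrete inf-sup condition) and Proposition~\ref{prop:plate:properties}\,$\mathrm{(ii)}$. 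The remaining work — the sign flip in $b$ and the $\mathsf{I}_2$ identity read through the distributional $\dDiv$ — is routine but must be tracked carefully.
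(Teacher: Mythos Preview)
Your proposal is correct. Steps~2 and~3 are precisely the ``suitable adaptation'' the paper alludes to, with the sign bookkeeping done right; the identity $c\{(\bM,\lambda,u),(\bM-\bN_h,\lambda-\mu_h,u-v_h)\}=\dual{\lambda-\mu_h}{u}$ via $\dual{\dDiv(\bM-\bN_h)}{u}=\ip{\bM-\bN_h}{\bM}_\Omega$ is the correct analogue of the membrane $\mathsf{I}_2$ identity.

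The genuine difference is in Step~1. You correctly flag that a biharmonic dual problem is too rough for $\Pi_h^{\dDiv}$, and you circumvent this by invoking the discrete inf-sup condition for $(\mathbb{X}(\cT),\mathcal{P}^1(\cT))$ to produce $\bN_h$ with $\dDiv\bN_h=w_h$ and $\|\bN_h\|_\Omega\lesssim\|w_h\|_\Omega$. The paper takes a different route: it solves the \emph{second-order} problem $\Delta\mathsf{v}=w_h$, $\mathsf{v}\in H_0^1(\Omega)$, observes that $\mathbf{I}\mathsf{v}\in\bbLtwosym(\Omega)$ satisfies $\dDiv(\mathbf{I}\mathsf{v})=\Delta\mathsf{v}=w_h$, and uses standard Laplacian regularity $\mathsf{v}\in H^{1+r}(\Omega)$ with $r>1/2$ to justify $\bN_h:=\Pi_h^{\dDiv}(\mathbf{I}\mathsf{v})$; the commutation property then gives $\dDiv\bN_h=\Pi_h^1 w_h=w_h$. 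Your approach is more elementary --- it needs only the discrete inf-sup already established in the well-posedness proof, and no elliptic regularity or interpolation operator --- while the paper's approach makes the right inverse explicit and reuses the same machinery that underlies the Fortin argument. Both land on the same estimate $\|\Pi_h^1 u-u_h\|_\Omega\lesssim\|\bM-\bM_h\|_\Omega$.
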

\begin{proof}
Let us estimate $\|u - u_{h}\|_{\Omega}$. 
An application of the triangle inequality and the fact that $\|u - \Pi_{h}^{1}u\|_{\Omega}^{2} = \min_{v_{h}\in M_{h}}\|u - v_{h}\|_{\Omega}^{2}$ give
\begin{equation}\label{eq:plate:estimate_u-uh1}
\|u - u_{h}\|_{\Omega}^{2} \leq 2 \min_{v_{h}\in M_{h}}\|u - v_{h}\|_{\Omega}^{2} + 2\| \Pi_{h}^{1}u - u_{h}\|_{\Omega}^{2}.
\end{equation}   
We now concentrate on $\|\Pi_{h}^{1}u - u_{h}\|_{\Omega}^{2}$. 
Let us introduce the auxiliary variable $\mathsf{v}\in H_0^{1}(\Omega)$, defined as the unique solution to $\Delta \mathsf{v} = \Pi_{h}^{1} u - u_{h}$ in $\Omega$ and $\mathsf{v} = 0$ on $\partial\Omega$. 
By elliptic regularity \cite{MR0775683}, there exists $r\in (1/2,1]$ (depending only on $\Omega$) such that $\mathsf{v}\in H^{1+r}(\Omega)$. 
Hence, the term $\mathbf{I}\mathsf{v} \in [H^{1+r}(\Omega)]^{2 \times 2}\cap \mathbb{H}(\dDiv;\Omega)$, where $\mathbf{I}$ denotes the identity matrix, and thus $\bN_{h} :=\Pi_{h}^{\dDiv}(\mathbf{I}\mathsf{v})$ is well defined with $\dDiv \bN_{h} = \Pi_{h}^{1}u - u_{h}$. 
Using the properties of the projection operator $\Pi_{h}^{\dDiv}$ provided in section \ref{sec:fem_and_interp}, the identity $\bM = \Grad\nabla u$, and property $(\mathrm{iii})$ from Proposition \ref{prop:plate:properties}, we obtain
\begin{align*}
\|\Pi_{h}^{1}u - u_{h}\|_{\Omega}^{2} 
= & \, (u - u_{h},\Pi_{h}^{1}( \dDiv \bN_{h}))_{\Omega}
= (u - u_{h},\dDiv\bN_{h})_{\Omega} 
= (\bM - \bM_{h},\bN_{h})_{\Omega} \\
&\leq \|\bM - \bM_{h}\|_{\Omega}\|\bN_{h}\|_{\Omega} 
\leq \|\bM - \bM_{h}\|_{\Omega}\|\bN_{h}\|_{\mathbb{H}(\dDiv;\Omega)} 
\lesssim \|\bM - \bM_{h}\|_{\Omega}\|\Pi_{h}^{1}u - u_{h}\|_{\Omega}. 
\end{align*}
Consequently, $\|\Pi_{h}^{1}u - u_{h}\|_{\Omega} \lesssim  \|\bM - \bM_{h}\|_{\Omega}$. This, in combination with \eqref{eq:plate:estimate_u-uh1}, results in
\begin{equation*}
\|u - u_{h}\|_{\Omega}^{2} 
\lesssim 
\min_{v_{h}\in M_{h}}\|u - v_{h}\|_{\Omega}^{2} + \|\bM - \bM_{h}\|_{\Omega}^{2}.
\end{equation*}

The rest of the proof follows a suitable adaptation of steps 2 and 3 in the proof of Theorem  \ref{thm:bestapprox}. For brevity, we skip those details.
\end{proof}

The derivation of convergence rates for the solution of the plate obstacle problem is more challenging than the case associated to the elliptic membrane obstacle problem. 
The main difficulty is given by the lack of regularity of the solution $u$. 
To be precise, it is known that under suitable assumptions on $g$ and $\Omega$, the solution to \eqref{eq:plate:weakformObstacle} belongs to $H^{3}_{\text{loc}}(\Omega)\cap C^{2}(\Omega)$; see, e.g., \cite{MR0330754,MR0324208,MR0529478}. 
Moreover, if $\Omega$ is convex, it follows that $u\in H^{3}(\Omega)$ (see \cite{MR2904578} and references therein). 
In general, this regularity cannot be improved up to $H^{4}(\Omega)$, even for the case when data is smooth \cite{MR0529478}. 
The examples of exact solutions from \cite[section 4.2]{MR4029734} and \cite[section 5]{MR3061064} suggest that the regularity of $u$ is not better than $C^{2,\frac{1}{2}}(\Omega)$ or $H^{\frac{7}{2}-\epsilon}(\Omega)$ with $\epsilon > 0$. 
This motivates the development and analysis of a posteriori error estimates for problem \eqref{eq:plate:weakformObstacle}, which will be done in section \ref{sec:apost:plate}.


\section{A posteriori estimation based on postprocessed solution}\label{sec:apost}
In this section we provide a posteriori error analysis for the mixed FEMs from sections~\ref{sec:obst_problem} and~\ref{sec:plate}.
First, we present some auxiliary results in section~\ref{sec:apost:aux}. Then, in section~\ref{sec:apost:elastic}, we study a posteriori error estimators for problem~\eqref{eq:weak_mixed_discrete}. 
Finally, in section~\ref{sec:apost:plate}, we investigate a posteriori error estimates for problem~\eqref{eq:plate:weak_mixed_discrete}.

\subsection{Auxiliary results}\label{sec:apost:aux}
The proof of the next auxiliary result follows as in~\cite[Theorem~1, Lemma~3]{MR4050087}.
\begin{lemma}[norm equivalences]\label{lem:normequiv}
  The following norm equivalences hold true for all $(v,\btau,\mu)\in H_0^1(\Omega)\times [L^2(\Omega)]^n\times H^{-1}(\Omega)$ and all $(w,\bN,\chi)\in H_0^2(\Omega)\times [L^2(\Omega)]^{n\times n}\times H^{-2}(\Omega)$: 
  \begin{subequations}\label{eq:normequiv}
  \begin{align}
    \|(v,\btau,\mu)\|_{H_0^1(\Omega)\times [L^2(\Omega)]^n\times H^{-1}(\Omega)}^2 &\eqsim \|\div\btau+\mu\|_{-1}^2 
    + \|\btau-\nabla v\|_\Omega^2 + \dual{\mu}v, \label{eq:normequiv:a} \\
    \|(w,\bN,\chi)\|_{H_0^2(\Omega)\times [L^2(\Omega)]^{n\times n}\times H^{-2}(\Omega)}^2 &\eqsim \|\dDiv\bN-\chi\|_{-2}^2 
    + \|\bN-\Grad\nabla w\|_\Omega^2 + \dual{\chi}w. \label{eq:normequiv:b}
  \end{align}
\end{subequations}
\end{lemma}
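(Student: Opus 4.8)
The plan is to prove the two equivalences by the same argument, since \eqref{eq:normequiv:b} is formally \eqref{eq:normequiv:a} with $(\nabla,\div,H_0^1(\Omega),H^{-1}(\Omega))$ replaced by $(\Grad\nabla,\dDiv,H_0^2(\Omega),H^{-2}(\Omega))$; I would carry out \eqref{eq:normequiv:a} in full and indicate the (purely cosmetic) modifications for \eqref{eq:normequiv:b} at the end. The bound ``right-hand side $\lesssim$ norm'' is immediate: the triangle inequality together with the boundedness $\|\div\btau\|_{-1}\leq\|\btau\|_\Omega$ recalled in section~\ref{sec:notation} gives $\|\div\btau+\mu\|_{-1}^2\lesssim\|\btau\|_\Omega^2+\|\mu\|_{-1}^2$, and $\|\btau-\nabla v\|_\Omega^2\lesssim\|\btau\|_\Omega^2+\|v\|_1^2$ (using $\|\nabla v\|_\Omega=\|v\|_1$), while Cauchy--Schwarz and Young's inequality yield $\dual\mu v\leq\tfrac12\|\mu\|_{-1}^2+\tfrac12\|v\|_1^2$; adding these three estimates bounds the right-hand side of \eqref{eq:normequiv:a} by a multiple of $\|v\|_1^2+\|\btau\|_\Omega^2+\|\mu\|_{-1}^2$.

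The nontrivial direction rests on rewriting the indefinite term $\dual\mu v$. Using the distributional definition of the divergence, $\dual{\div\btau}v=-\ip\btau{\nabla v}_\Omega$ for $v\in H_0^1(\Omega)$, together with $\|\nabla v\|_\Omega=\|v\|_1$, one obtains the identity
\begin{align*}
  \dual\mu v=\dual{\div\btau+\mu}v-\dual{\div\btau}v=\dual{\div\btau+\mu}v+\ip{\btau-\nabla v}{\nabla v}_\Omega+\|v\|_1^2 .
\end{align*}
Inserting this into the right-hand side of \eqref{eq:normequiv:a} produces the two cross terms $\dual{\div\btau+\mu}v$ and $\ip{\btau-\nabla v}{\nabla v}_\Omega$; estimating each by Young's inequality with a fixed parameter (chosen small enough that the resulting coefficients of $\|v\|_1^2$, $\|\div\btau+\mu\|_{-1}^2$, and $\|\btau-\nabla v\|_\Omega^2$ all remain strictly positive) shows that the right-hand side of \eqref{eq:normequiv:a} is bounded below by a positive multiple of $\|v\|_1^2+\|\div\btau+\mu\|_{-1}^2+\|\btau-\nabla v\|_\Omega^2$. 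This step — turning the indefinite quantity $\dual\mu v$ into a coercive one — is the heart of the proof, and it is also where the algebra has to be done carefully.

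It then remains to recover $\|\btau\|_\Omega$ and $\|\mu\|_{-1}$ from this lower bound, which follows from $\|\btau\|_\Omega\leq\|\btau-\nabla v\|_\Omega+\|v\|_1$ and $\|\mu\|_{-1}\leq\|\div\btau+\mu\|_{-1}+\|\div\btau\|_{-1}\leq\|\div\btau+\mu\|_{-1}+\|\btau\|_\Omega$; hence $\|v\|_1^2+\|\btau\|_\Omega^2+\|\mu\|_{-1}^2$ is controlled by the right-hand side of \eqref{eq:normequiv:a}, completing that case. For \eqref{eq:normequiv:b} the only change is that the analogue of the displayed identity reads $\dual\chi w=-\dual{\dDiv\bN-\chi}w+\ip{\bN-\Grad\nabla w}{\Grad\nabla w}_\Omega+\|w\|_2^2$, which uses the twofold integration by parts $\dual{\dDiv\bN}w=\ip{\bN}{\Grad\nabla w}_\Omega$ valid for $w\in H_0^2(\Omega)$ (the two sign changes cancel) and the sign of $b$ in \eqref{eq:plate:weak_mixed}; from there the argument is word for word the same. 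The main obstacle I anticipate is thus not analytic but bookkeeping: getting the signs right in this second-order identity (and in the first-order one, given the sign conventions for $\div$ and $b$) and choosing the Young parameter consistently. I would stress that no sign condition on $\mu$ or $\chi$ enters, since the indefinite terms are absorbed purely by Young's inequality.
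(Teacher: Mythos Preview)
Your proposal is correct and follows essentially the same route as the paper's proof: both rewrite $\dual{\mu}{v}$ via the identity $\dual{\div\btau}{v}=-\ip{\btau}{\nabla v}_\Omega$, then absorb the two resulting cross terms by Young's inequality (the paper even uses the same parameters $\delta_1=\delta_2=\tfrac23$), and finally recover the missing norm pieces by the triangle inequality and $\|\div\btau\|_{-1}\leq\|\btau\|_\Omega$. The only difference is cosmetic: the paper expands $\|\btau-\nabla v\|_\Omega^2$ and obtains a lower bound directly in terms of $\|\btau\|_\Omega^2+\|\nabla v\|_\Omega^2$, whereas you keep $\|\btau-\nabla v\|_\Omega^2$ intact and recover $\|\btau\|_\Omega$ afterwards; your handling of the second-order case, including the sign bookkeeping for $\dDiv$, is also correct.
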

\begin{proof}
  The lower bound in~\eqref{eq:normequiv:a} follows by the triangle inequality and boundedness of the divergence operator.
  To see the upper bound, let $(v,\btau,\mu) \in H_0^1(\Omega)\times [L^2(\Omega)]^n\times H^{-1}(\Omega)$ be given. Integration by parts and two applications of Young's inequality with parameters $\delta_1>0$, $\delta_2>0$ give the following estimates: 
  \begin{align*}
  \|\div\btau+\mu\|_{-1}^2 + \|\btau-\nabla v\|_\Omega^2 &\,+ \dual{\mu}v 
     = \|\div\btau+\mu\|_{-1}^2 + \|\btau\|_\Omega^2 + \|\nabla v\|_\Omega^2 -\ip{\btau}{\nabla v}_\Omega + \dual{\div\btau+\mu}{v} 
    \\
    & \geq \|\div\btau+\mu\|_{-1}^2 + \|\btau\|_\Omega^2 + \|\nabla v\|_\Omega^2 - \|\btau\|_\Omega\|\nabla v\|_\Omega - \|\div\btau+\mu\|_{-1}\|\nabla v\|_\Omega 
    \\
    &\geq(1-\tfrac{\delta_1^{-1}}{2})\|\div\btau+\mu\|_{-1}^2 + (1-\tfrac{\delta_2^{-1}}{2})\|\btau\|_\Omega^2 + (1-\tfrac{\delta_1}{2}-\tfrac{\delta_2}{2})\|\nabla v\|_\Omega^2.
  \end{align*}
  Choosing $\delta_1 = 2/3$ and $\delta_2=2/3$, we find that 
  \begin{equation*}
    \|\div\btau+\mu\|_{-1}^2 + \|\btau-\nabla v\|_\Omega^2 + \dual{\mu}v  \geq \frac14 \|\div\btau+\mu\|_{-1}^2 + \frac14 \|\btau\|_\Omega^2 + \frac13 \|\nabla v\|_\Omega^2.
  \end{equation*}
  Then, 
  \begin{align*} 
    \|\mu\|_{-1}^2 + \|\btau\|_\Omega^2 + \|\nabla v\|_\Omega^2  
    & \lesssim \|\div\btau + \mu\|_{-1}^2+ \|\div\btau\|_{-1}^2 + \|\btau\|_\Omega^2 + \|\nabla v\|_\Omega^2\\
    & \lesssim \|\div\btau + \mu\|_{-1}^2+ \|\btau\|_\Omega^2 + \|\nabla v\|_\Omega^2
    \lesssim \|\div\btau+\mu\|_{-1}^2 + \|\btau-\nabla v\|_\Omega^2 + \dual{\mu}v
  \end{align*}
  finishes the proof of the upper bound in~\eqref{eq:normequiv:a}. 

  The proof of~\eqref{eq:normequiv:b} follows similar arguments and is omitted. 
\end{proof}

One major challenge to derive a posteriori estimators for mixed FEMs for obstacle problems is to deal with the duality pairing between $H_0^1(\Omega)$ and $H^{-1}(\Omega)$ (resp. between $H_0^2(\Omega)$ and $H^{-2}(\Omega)$). 
The reason is that one obtains only an $L^2(\Omega)$-approximation of the primal variable $u$. 
To overcome this difficulty, we consider postprocessed solutions together with Lemma~\ref{lem:normequiv}. 

To present the following result, which states a preliminary a posteriori error estimate for the membrane obstacle problem, we introduce the positive part of a function $v: \Omega \to \mathbb{R}$
\begin{equation*}
v_{+} :=\max\{v,0\}.
\end{equation*}
\begin{lemma}
\label{lem:apost:membrane}
  Let $(\bsigma,\lambda,u)=(\nabla u,-\Delta u-f,u)$ denote the unique solution of~\eqref{eq:weak_mixed}.
  Estimate
    \begin{equation*}
   \|(u - v,\bsigma-\btau,\lambda-\mu)\|_{H_0^1(\Omega)\times [L^2(\Omega)]^n\times H^{-1}(\Omega)}^2 
\lesssim \|\btau-\nabla v\|_\Omega^2 + \dual{\mu}{(v-g)_+} + \|\nabla(g-v)_+\|_\Omega^2
    + \|h_{\cT} (1-\Pi_h^0)f\|_\Omega^2
  \end{equation*}
  holds for all $v\in H_0^1(\Omega)$, $(\btau,\mu)\in K_{1}$ with $\div\btau+\mu = -\Pi_h^0f$. 
\end{lemma}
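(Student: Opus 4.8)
The plan is to reduce the estimate to the norm equivalence~\eqref{eq:normequiv:a} of Lemma~\ref{lem:normequiv} and then control the three resulting quantities. First I would apply~\eqref{eq:normequiv:a} with $(v,\btau,\mu)$ replaced by the error triple $(u-v,\bsigma-\btau,\lambda-\mu)$, which lies in $H_0^1(\Omega)\times[L^2(\Omega)]^n\times H^{-1}(\Omega)$, to obtain
\begin{align*}
  \|(u-v,\bsigma-\btau,\lambda-\mu)\|^2_{H_0^1(\Omega)\times[L^2(\Omega)]^n\times H^{-1}(\Omega)} \eqsim \|\div(\bsigma-\btau)+\lambda-\mu\|_{-1}^2 + \|(\bsigma-\btau)-\nabla(u-v)\|_\Omega^2 + \dual{\lambda-\mu}{u-v}.
\end{align*}
Since $\bsigma=\nabla u$, the second term on the right collapses to $\|\nabla v-\btau\|_\Omega^2=\|\btau-\nabla v\|_\Omega^2$. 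For the first term, using $\div\bsigma+\lambda=-f$ (which holds because $\bsigma=\nabla u$ and $\lambda=-\Delta u-f$) together with the hypothesis $\div\btau+\mu=-\Pi_h^0f$ gives $\div(\bsigma-\btau)+\lambda-\mu=-(1-\Pi_h^0)f$, hence $\|\div(\bsigma-\btau)+\lambda-\mu\|_{-1}=\|(1-\Pi_h^0)f\|_{-1}\lesssim\|h_\cT(1-\Pi_h^0)f\|_\Omega$ by the approximation property of $\Pi_h^0$ from section~\ref{sec:fem_and_interp}. Both contributions already appear on the right-hand side of the claimed bound.

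The heart of the argument is the cross term $\dual{\lambda-\mu}{u-v}$. Here I would introduce $\widetilde v:=\max\{v,g\}=v+(g-v)_+$, which belongs to $H_0^1(\Omega)$: it lies in $H^1(\Omega)$ (maximum of two $H^1$ functions) and its trace equals $\max\{0,g|_{\partial\Omega}\}=0$ because $g\le0$ on $\partial\Omega$. Splitting $\dual{\lambda-\mu}{u-v}=\dual{\lambda-\mu}{u-\widetilde v}+\dual{\lambda-\mu}{(g-v)_+}$, I would treat the two pieces separately. For the first, I read the pairings as integrals against the nonnegative Radon measures $\lambda,\mu$ and use the pointwise a.e.\ identity $u-\widetilde v=(u-g)-(v-g)_+$: the complementarity relation $\int_\Omega(u-g)\,\mathrm{d}\lambda=0$ and $\lambda\ge0$ yield $\dual{\lambda}{u-\widetilde v}=-\int_\Omega(v-g)_+\,\mathrm{d}\lambda\le0$, while $\mu\ge0$ and $u-g\ge0$ a.e.\ yield $-\dual{\mu}{u-\widetilde v}=-\int_\Omega(u-g)\,\mathrm{d}\mu+\int_\Omega(v-g)_+\,\mathrm{d}\mu\le\dual{\mu}{(v-g)_+}$; together $\dual{\lambda-\mu}{u-\widetilde v}\le\dual{\mu}{(v-g)_+}$. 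For the second piece, $(g-v)_+\in H_0^1(\Omega)$ (again because $g\le0$ on $\partial\Omega$), so by the $H^{-1}$--$H_0^1$ Cauchy--Schwarz inequality and Young's inequality with a small parameter $\delta>0$,
\begin{align*}
  \dual{\lambda-\mu}{(g-v)_+}\le\|\lambda-\mu\|_{-1}\,\|\nabla(g-v)_+\|_\Omega\le\delta\,\|\lambda-\mu\|_{-1}^2+\tfrac1{4\delta}\|\nabla(g-v)_+\|_\Omega^2.
\end{align*}

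Finally I would collect the three estimates and note $\|\lambda-\mu\|_{-1}^2\le\|(u-v,\bsigma-\btau,\lambda-\mu)\|^2_{H_0^1(\Omega)\times[L^2(\Omega)]^n\times H^{-1}(\Omega)}$; choosing $\delta$ small enough lets me absorb $\delta\|\lambda-\mu\|_{-1}^2$ into the left-hand side, which completes the proof. The main obstacle is precisely the cross term: one must keep careful track of the complementarity and sign conditions, and because $u-g$ and $(v-g)_+$ need not vanish on $\partial\Omega$ one has to interpret the relevant quantities through the nonnegative Radon measure representation of $\lambda$ and $\mu$ rather than via $H^{-1}$--$H_0^1$ duality alone (if $\dual{\mu}{(v-g)_+}=+\infty$ the asserted bound is trivial, so one may assume it finite throughout). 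The remaining ingredients — the norm equivalence and identifying the residual with data oscillation — are routine.
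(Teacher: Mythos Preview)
Your proof is correct and follows essentially the same route as the paper: apply the norm equivalence~\eqref{eq:normequiv:a}, identify the residual with $(1-\Pi_h^0)f$, introduce $\max\{v,g\}$ to exploit the complementarity $\int_\Omega(u-g)\,\mathrm{d}\lambda=0$ and the sign conditions, and absorb the resulting $\|\lambda-\mu\|_{-1}^2$ term via Young's inequality. Your bookkeeping differs only cosmetically---you split $\dual{\lambda-\mu}{u-v}$ via $\widetilde v$ first and then handle $\lambda$ and $\mu$ on each piece, whereas the paper treats $\dual{\lambda}{u-v}$ and $-\dual{\mu}{u-v}$ separately and combines at the end---but the substance is identical.
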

\begin{proof}
  By~\eqref{eq:normequiv:a} and identities $\div(\bsigma-\btau) + \lambda-\mu = \Pi_h^0f - f$ and $\bsigma=\nabla u$, we get that
  \begin{align*}
  \|(u - v,\bsigma-\btau,\lambda-\mu)\|_{H_0^1(\Omega)\times [L^2(\Omega)]^n\times H^{-1}(\Omega)}^2  
  & \lesssim \|\div(\bsigma-\btau)+\lambda-\mu\|_{-1}^2 + \|\btau-\nabla v\|_\Omega^2 + \dual{\lambda-\mu}{u-v}
    \\
    &\lesssim \|h_{\cT}(1-\Pi_h^0)f\|_\Omega^2 + \|\btau-\nabla v\|_\Omega^2 + \dual{\lambda-\mu}{u-v}.
  \end{align*}
  It remains to estimate the duality term $\dual{\lambda-\mu}{u-v}$. This is done with arguments found in~\cite[Proof of Proposition~4.2]{MR1860720} or in the proof of~\cite[Theorem~16]{MR4050087}. For completeness, we repeat the main steps here.
    First, note that for $\delta>0$
  \begin{align*}
    \dual{\lambda}{u-v} &= \int_{\Omega}(u-g)\mathrm{d}\lambda + \int_{\Omega}(g-\max\{g,v\})\mathrm{d}\lambda +\dual{\lambda}{\max\{g,v\}-v} \\
    &\leq \dual{\lambda}{\max\{g,v\}-v} = \dual{\lambda-\mu}{(g-v)_+} + \dual{\mu}{(g-v)_+} \\
    &\leq \frac{\delta}2 \|\lambda-\mu\|_{-1}^2 + \frac{\delta^{-1}}2 \|\nabla(g-v)_+\|_\Omega^2 + \dual{\mu}{(g-v)_+}.
  \end{align*}
  Second, due to $u\geq g$ a.e. in $\Omega$ and $\mu\geq 0$,  we have
  \begin{align*}
    -\dual{\mu}{u-v} 
    = \int_{\Omega}(g - u)\mathrm{d}\mu + \int_{\Omega}(v-g)\mathrm{d}\mu 
    \leq \int_{\Omega}(v-g)\mathrm{d}\mu.
  \end{align*}
  Combining all previous estimates we arrive at
  \begin{align*}
    &  \|(u - v,\bsigma-\btau,\lambda-\mu)\|_{H_0^1(\Omega)\times [L^2(\Omega)]^n\times H^{-1}(\Omega)}^2  \\
    &\qquad \lesssim \|h_{\cT}(1-\Pi_h^0)f\|_\Omega^2 + \|\btau-\nabla v\|_\Omega^2 + \frac{\delta}2 \|\lambda-\mu\|_{-1}^2 + \frac{\delta^{-1}}2 \|\nabla(g-v)_+\|_\Omega^2 + \dual{\mu}{(g-v)_+} + \int_{\Omega}(v-g)\mathrm{d}\mu \\
    &\qquad = \|h_{\cT}(1-\Pi_h^0)f\|_\Omega^2 + \|\btau-\nabla v\|_\Omega^2 + \frac{\delta}2 \|\lambda-\mu\|_{-1}^2 + \frac{\delta^{-1}}2 \|\nabla(g-v)_+\|_\Omega^2 + \dual{\mu}{(v-g)_+}.
  \end{align*}
  Subtracting $\frac{\delta}2 \|\lambda-\mu\|_{-1}^2$ for sufficient small $\delta>0$ finishes the proof.
\end{proof}
The estimate proved in Lemma \ref{lem:apost:membrane} can be used to define a reliable error estimator by setting $\mu = \lambda_h$, $\btau=\bsigma_h$. Note that $v$ needs to be in $H_0^1(\Omega)$ so that the choice $v=u_h\in M_h$ is not valid. However, we can postprocess and smooth the discrete solution $u_h$ to obtain an efficiently computable $v$ in the previous result. We present details in section~\ref{sec:apost:elastic} below. 

The proof of Lemma~\ref{lem:apost:membrane} can not be directly adapted for solutions of the plate obstacle problem. 
The reason is that in the proof we use that $\max\{v,w\}\in H_0^1(\Omega)$ for $v,w\in H_0^1(\Omega)$, which is in general not true if we replace $H_0^1(\Omega)$ by $H_0^2(\Omega)$. 
Nevertheless, we have the following result which combines Lemma~\ref{lem:normequiv} with arguments used to prove~\cite[Lemma~4.1]{MR3595879}.
We recall that the Lagrange multiplier $\lambda\in H^{-2}(\Omega)$ with $\lambda \geq 0$ defines a positive Radon measure. 
\begin{lemma}\label{lem:apost:plate}
  Let $(\bM,\lambda,u)=(\Grad\nabla u,\Delta^2 u-f,u)$ denote the unique solution of~\eqref{eq:plate:weak_mixed}.
  Estimate
    \begin{align*}
      \|(u - v,\bM-\bN,\lambda-\mu)\|_{H_0^2(\Omega)\times [L^2(\Omega)]^{n\times n}\times H^{-2}(\Omega)}^2 
    \lesssim \, & \|\bN-\Grad\nabla v\|_\Omega^2 + \|h_{\cT}^2 (1-\Pi_h^1)f\|_\Omega^2 \\
    & + \ip{\mu}{v-g}_\Omega + \int_\Omega(g-v)_+\,\mathrm{d}\lambda 
  \end{align*}
  holds for all $v\in H_0^2(\Omega)$, $(\bM,\mu)\in K_{2}$ with $\mu\in L^2(\Omega)$ and $\dDiv\bN-\mu =\Pi_h^1f$.
\end{lemma}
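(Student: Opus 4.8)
The plan is to follow the proof of Lemma~\ref{lem:apost:membrane}, but to replace the manipulations that rely on $\max\{g,v\}\in H_0^1(\Omega)$ — which have no analogue in $H_0^2(\Omega)$ — by measure-theoretic arguments exploiting that $\lambda\in H^{-2}(\Omega)$ with $\lambda\geq0$ defines a \emph{positive} Radon measure (with support away from $\partial\Omega$, since $g<0$ there and $u=0$ there force $u>g$ near the boundary) and that, for $n=2$, all functions involved ($u,v\in H_0^2(\Omega)$ and $g\in C^1(\overline\Omega)$) are continuous on $\overline\Omega$.

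First I would apply the norm equivalence~\eqref{eq:normequiv:b} of Lemma~\ref{lem:normequiv} to the triple $(u-v,\bM-\bN,\lambda-\mu)$, which gives
\begin{align*}
  &\|(u-v,\bM-\bN,\lambda-\mu)\|_{H_0^2(\Omega)\times[L^2(\Omega)]^{n\times n}\times H^{-2}(\Omega)}^2 \\
  &\qquad\eqsim \|\dDiv(\bM-\bN)-(\lambda-\mu)\|_{-2}^2 + \|(\bM-\bN)-\Grad\nabla(u-v)\|_\Omega^2 + \dual{\lambda-\mu}{u-v},
\end{align*}
and then bound the three terms on the right separately. Since $\bM=\Grad\nabla u$, the middle term is exactly $\|\bN-\Grad\nabla v\|_\Omega^2$. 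For the first term, $\dDiv\bM-\lambda=f$ (because $\bM=\Grad\nabla u$ and $\lambda=\Delta^2u-f$) together with the hypothesis $\dDiv\bN-\mu=\Pi_h^1f$ gives $\dDiv(\bM-\bN)-(\lambda-\mu)=(1-\Pi_h^1)f$, whence the approximation property of $\Pi_h^m$ recorded in section~\ref{sec:fem_and_interp} (with $m=1$) yields $\|\dDiv(\bM-\bN)-(\lambda-\mu)\|_{-2}\lesssim\|h_\cT^2(1-\Pi_h^1)f\|_\Omega$.

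The core of the proof is the duality term $\dual{\lambda-\mu}{u-v}=\dual{\lambda}{u-v}-\ip{\mu}{u-v}_\Omega$, where the second pairing is an $L^2(\Omega)$ inner product since $\mu\in L^2(\Omega)$ by assumption. For the $\lambda$-contribution I would write $\dual{\lambda}{u-v}=\int_\Omega(u-v)\,\mathrm{d}\lambda=\int_\Omega(u-g)\,\mathrm{d}\lambda+\int_\Omega(g-v)\,\mathrm{d}\lambda$; the first integral vanishes by the complementarity condition $\int_\Omega(u-g)\,\mathrm{d}\lambda=0$, and the second is $\leq\int_\Omega(g-v)_+\,\mathrm{d}\lambda$ because $g-v\leq(g-v)_+$ pointwise and $\lambda\geq0$. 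For the $\mu$-contribution, using $u\geq g$ a.e. in $\Omega$ and $\mu\geq0$ (from $(\bN,\mu)\in K_2$), $-\ip{\mu}{u-v}_\Omega=\ip{\mu}{g-u}_\Omega+\ip{\mu}{v-g}_\Omega\leq\ip{\mu}{v-g}_\Omega$. Collecting these estimates with the bounds on the first two terms yields the claimed inequality. The main obstacle, compared with the membrane case, is precisely that $(g-v)_+\notin H_0^2(\Omega)$ in general, so the term $\int_\Omega(g-v)_+\,\mathrm{d}\lambda$ cannot be traded for an $\Omega$-norm of derivatives and absorbed into the left-hand side via Young's inequality as in Lemma~\ref{lem:apost:membrane}; it is kept in measure form, which is exactly why the regularized maximum function $M_\varepsilon$ (Proposition~\ref{prop:max_h02}) will be needed to turn this bound into a computable estimator in section~\ref{sec:apost:plate}.
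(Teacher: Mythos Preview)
Your proposal is correct and follows essentially the same route as the paper: apply the norm equivalence~\eqref{eq:normequiv:b}, use $\bM=\Grad\nabla u$ and $\dDiv\bM-\lambda=f$ together with $\dDiv\bN-\mu=\Pi_h^1f$ to handle the first two terms, and then split the duality term via $\int_\Omega(u-g)\,\mathrm{d}\lambda=0$ and $u\geq g$, $\mu\geq0$. The only cosmetic difference is that the paper cites~\cite[Lemma~4.1]{MR3595879} and writes $\dual{\lambda}{u-v}=\int_\Omega(g-v)\,\mathrm{d}\lambda$ directly, whereas you insert the vanishing intermediate integral explicitly; your closing remarks on why $(g-v)_+$ cannot be absorbed (unlike in Lemma~\ref{lem:apost:membrane}) are helpful context but not part of the proof proper.
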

\begin{proof}
  Employing~\eqref{eq:normequiv:b}, $\dDiv\bM-\lambda = f$, and $\bM = \Grad\nabla u$, we get that
  \begin{align*}
    \|(u-v,\bM-\bN,\lambda-\mu)\|_{H_0^2(\Omega)\times [L^2(\Omega)]^n\times H^{-2}(\Omega)}^2 
    &\eqsim \|(1-\Pi_h^1)f\|_{-2}^2 
    + \|\bN-\Grad\nabla v\|_\Omega^2 + \dual{\lambda-\mu}{u-v}
    \\
    &\lesssim \|h_{\cT}^2(1-\Pi_h^1)f\|_\Omega^2 + \|\bN -\Grad\nabla v\|_\Omega^2 + \dual{\lambda-\mu}{u-v}.
  \end{align*}
  It thus remains to tackle the duality term on the right-hand side. Following a similar argumentation as in the proof of~\cite[Lemma~4.1]{MR3595879} we find
  \begin{align*}
    \dual{\lambda}{u-v} = \int_\Omega(g-v)\,\mathrm{d}\lambda \leq \int_\Omega(g-v)_+\,\mathrm{d}\lambda.
  \end{align*}
  Furthermore, using $\mu\in L^2(\Omega)$, $\mu\geq 0$, $u\geq g$, it follows that
  \begin{align*}
    -\dual{\mu}{u-v} = \ip{\mu}{v-u}_\Omega \leq \ip{\mu}{v-g}_\Omega,
  \end{align*}
  which finishes the proof.
\end{proof}


\subsection{A posteriori error analysis for the membrane obstacle problem}\label{sec:apost:elastic}
Our starting point for defining an error estimator is Lemma~\ref{lem:apost:membrane}. 
Several possibilities exist to define a suitable postprocessed solution $v\in H_0^1(\Omega)$ of $(\bsigma_h,u_h)$. 
Here, we shall apply a simple quasi-interpolator that only uses solution component $u_h$.
Let $\eta_z\in \mathcal{P}^{1}(\mathcal{T})$ denote the nodal basis function with $\eta_z(z') = \delta_{z,z'}$ for all interior vertices $z,z'\in\mathcal{V}_0$. 
Furthermore, let $\omega_z\subset \mathcal{T}$ denote the set (patch) of all elements which share vertex $z$.  
We follow~\cite[section~3]{FuehrerHm1Loads} and define the weighted Cl\'ement quasi-interpolator $J_h\colon L^2(\Omega)\to \mathcal{P}^1(\mathcal{T})\cap H_0^1(\Omega)$ by 
\begin{align*}
  J_h v = \sum_{z\in \mathcal{V}_0} \ip{\phi_z}{v}_\Omega \eta_z,
\end{align*}
where the weight functions are given by
\begin{align*}
  \phi_z|_T = \begin{cases}
    \frac{\alpha_{z,T}}{|T|} & T\in\omega_z, \\
    0 & \text{else},
  \end{cases}
\end{align*}
and the coefficients satisfy
\begin{align*}
  \sum_{T\in\omega_z} \alpha_{z,T} = 1, \quad \sum_{T\in\omega_z} \alpha_{z,T} s_T = z, \quad \alpha_{z,T}\geq 0 \quad (T\in\omega_z).
\end{align*}
Here, $s_T$ denotes the center of mass of an element $T\in\mathcal{T}$.
Note that the coefficients $\alpha_{z,T}$ are not necessarily unique (see, e.g.,~\cite[Example~10]{FuehrerHm1Loads}).
In~\cite[Theorem~11]{FuehrerHm1Loads} it was shown that $J_h$ has second-order approximation properties while the Cl\'ement operator with $0$-order moments, namely, setting $\alpha_{z,T} = |T|/|\omega_z|$ in the above definition, only has first-order approximation properties.

Given  $T\in\mathcal{T}$, we define the local error indicators 
\begin{align*}
  \rho_r^2(T) &:= \|\bsigma_h-\nabla J_h u_h\|_T^2,
  \qquad\rho_p^2(T) := \|\nabla(g-J_hu_h)_+\|_T^2, 
  \qquad\rho_c^2(T) := \ip{\lambda_h}{(g - J_hu_h)_+}_T.
\end{align*}
Here, $(\bsigma_h,\lambda_h,u_h)$ denotes the solution of~\eqref{eq:weak_mixed_discrete}.
Further, set 
\begin{align*}
  \mathrm{osc}(T)^2 = h_T^2\|(1-\Pi_h^0)f\|_T^2.
\end{align*}
Note that $\rho_r$ measures a residual, while $\rho_p$ measures the penetration of the contact condition, and  $\rho_c$ measures a violation of the obstacle condition for the postprocessed solution $J_h u_h$.
The next theorem is the main result of this section, it directly stems from Lemma~\ref{lem:apost:membrane} with $v = J_h u_h$. 
\begin{theorem}[reliability]\label{thm:apost:membrane:rel}
  Let $(\bsigma,\lambda,u)$ denote the solution of~\eqref{eq:weak_mixed} and let $(\bsigma_h,\lambda_h,u_h)$ denote the solution of~\eqref{eq:weak_mixed_discrete}. Then, 
  \begin{align*}
    \|\nabla(u-J_h u_{h})\|_\Omega^2 + \|\bsigma-\bsigma_h\|_\Omega^2 + \|\lambda-\lambda_h\|_{-1}^2 
    \lesssim 
    \sum_{T\in\mathcal{T}} 
        \left(\rho_r(T)^2 + \rho_p(T)^2 + \rho_c(T)^2 + \mathrm{osc}(T)^2\right) 
    =:\rho^2.
  \end{align*}
\end{theorem}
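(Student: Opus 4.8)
The plan is to apply Lemma~\ref{lem:apost:membrane} with the specific choices $v = J_h u_h \in \mathcal{P}^1(\mathcal{T})\cap H_0^1(\Omega)\subset H_0^1(\Omega)$, $\btau = \bsigma_h$, and $\mu = \lambda_h$. The admissibility of these choices must be checked: we have $(\bsigma_h,\lambda_h)\in K_h$, and by Proposition~\ref{prop:properties}(i) we know $\div\bsigma_h+\lambda_h = -\Pi_h^0 f$, which is exactly the constraint required in the hypothesis of Lemma~\ref{lem:apost:membrane}. (Strictly, one should note that $(\bsigma_h,\lambda_h)\in K_h$ implies $(\bsigma_h,\lambda_h)\in K_1$ since $\mu_h|_T\geq 0$ for all $T$ gives $\lambda_h\geq 0$ in $\Omega$ in the distributional sense, and $\div\bsigma_h+\lambda_h\in L^2(\Omega)$.) With these substitutions, the left-hand side of the estimate in Lemma~\ref{lem:apost:membrane} becomes precisely $\|(u-J_hu_h,\bsigma-\bsigma_h,\lambda-\lambda_h)\|_{H_0^1(\Omega)\times[L^2(\Omega)]^n\times H^{-1}(\Omega)}^2$, which by definition of that product norm dominates $\|\nabla(u-J_hu_h)\|_\Omega^2 + \|\bsigma-\bsigma_h\|_\Omega^2 + \|\lambda-\lambda_h\|_{-1}^2$.

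Next I would identify each term on the right-hand side of Lemma~\ref{lem:apost:membrane} with the corresponding sum of local indicators. The term $\|\btau-\nabla v\|_\Omega^2 = \|\bsigma_h-\nabla J_hu_h\|_\Omega^2 = \sum_{T\in\mathcal{T}}\rho_r(T)^2$ by the definition of $\rho_r$ and additivity of the $L^2$ norm over elements. Similarly, $\|\nabla(g-v)_+\|_\Omega^2 = \|\nabla(g-J_hu_h)_+\|_\Omega^2 = \sum_{T\in\mathcal{T}}\rho_p(T)^2$. For the complementarity term, $\dual{\mu}{(v-g)_+}$: since $\mu = \lambda_h\in L^2(\Omega)$, this duality pairing is just $\ip{\lambda_h}{(J_hu_h-g)_+}_\Omega$. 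Here one must be slightly careful, because the indicator $\rho_c$ is defined with $(g-J_hu_h)_+$ rather than $(J_hu_h-g)_+$. However, using $\lambda_h\geq 0$ pointwise and $(J_hu_h-g)_+ \le (J_hu_h-g)_+$... — more to the point, one bounds $\ip{\lambda_h}{(J_hu_h-g)_+}_\Omega$ directly; actually the cleanest route is to revisit the proof of Lemma~\ref{lem:apost:membrane}: it is in fact $\dual{\mu}{(g-v)_+}$ (and $\int_\Omega(v-g)\,\mathrm{d}\mu$ with $\mu\geq0$, $v-g$ possibly negative) that appear, so that after bookkeeping the relevant quantity to control is $\ip{\lambda_h}{(g-J_hu_h)_+}_\Omega = \sum_T\rho_c(T)^2$. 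I would simply quote the estimate of Lemma~\ref{lem:apost:membrane} as stated, whose right-hand side contains $\dual{\mu}{(v-g)_+}$; one then observes $(v-g)_+ \le |v-g|$ is not directly $\rho_c$, so the honest thing is to point out that in the proof of Lemma~\ref{lem:apost:membrane} the term $\int_\Omega(v-g)\,\mathrm{d}\mu$ with $\mu\ge 0$ only contributes when $v>g$, while the genuinely signed contribution coming from $\dual{\lambda}{\max\{g,v\}-v} = \dual{\lambda-\mu}{(g-v)_+}+\dual{\mu}{(g-v)_+}$ yields the term $\ip{\mu}{(g-v)_+}$ matching $\rho_c$ — so I would restate Lemma~\ref{lem:apost:membrane} appropriately or apply it with the understanding that $\dual{\mu}{(v-g)_+}$ is replaced by $\ip{\mu}{(g-v)_+}$ after noticing $\dual{\mu}{(v-g)_+}\le\dual{\mu}{(v-g)_+}$; in the interest of a clean writeup I would take the statement of Lemma~\ref{lem:apost:membrane} at face value and note the positivity sign bookkeeping is exactly as in~\cite[Theorem~16]{MR4050087}.

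Finally, the oscillation term $\|h_{\cT}(1-\Pi_h^0)f\|_\Omega^2 = \sum_{T\in\mathcal{T}} h_T^2\|(1-\Pi_h^0)f\|_T^2 = \sum_{T\in\mathcal{T}}\mathrm{osc}(T)^2$ by definition of the mesh-size function $h_{\cT}$ and element-wise additivity. Summing the four identifications and invoking the hidden constant from Lemma~\ref{lem:apost:membrane} gives the claimed bound $\|\nabla(u-J_hu_h)\|_\Omega^2+\|\bsigma-\bsigma_h\|_\Omega^2+\|\lambda-\lambda_h\|_{-1}^2 \lesssim \rho^2$. The only real subtlety — and hence the step I expect to require the most care — is verifying that $v=J_hu_h$ is an admissible postprocessed solution, i.e.\ that $J_h u_h\in H_0^1(\Omega)$ (which holds because $J_h$ maps into $\mathcal{P}^1(\mathcal{T})\cap H_0^1(\Omega)$ by its construction using only interior-vertex nodal functions $\eta_z$, $z\in\mathcal{V}_0$), and the sign bookkeeping in the complementarity term described above. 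Everything else is a direct substitution into an already-proven lemma plus element-wise additivity of norms; no new estimate is needed, so the theorem "directly stems from Lemma~\ref{lem:apost:membrane}" as the surrounding text asserts.
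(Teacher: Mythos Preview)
Your approach is exactly the paper's: the theorem is obtained by substituting $v=J_hu_h$, $\btau=\bsigma_h$, $\mu=\lambda_h$ into Lemma~\ref{lem:apost:membrane} (the hypotheses being supplied by Proposition~\ref{prop:properties}(i) and the construction of $J_h$) and identifying each right-hand term with the corresponding local indicator. The sign mismatch you flag in $\rho_c$ is real---the lemma as stated produces $\dual{\lambda_h}{(J_hu_h-g)_+}$, not $\ip{\lambda_h}{(g-J_hu_h)_+}_\Omega$---so this is a minor inconsistency in the paper's definition of $\rho_c$ rather than a gap in your argument; either choice yields a nonnegative computable indicator and the paper's one-line proof simply glosses over it.
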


\begin{remark}[estimation of $\|u - u_{h}\|_{\Omega}$]
The error $u - u_{h}$ can be controlled by using the triangle inequality and Friedrich's inequality:
\begin{equation*}
\|u - u_{h}\|_{\Omega} \leq \|u - J_h u_{h}\|_{\Omega} + \|u_{h} - J_h u_{h}\|_{\Omega}
\lesssim 
\|\nabla(u-J_h u_{h})\|_\Omega +  \|u_{h} - J_h u_{h}\|_{\Omega}.
\end{equation*}
The term $\|\nabla(u-J_h u_{h})\|_\Omega$ is bounded as in Theorem \ref{thm:apost:membrane:rel}, while $\|u_{h} - J_h u_{h}\|_{\Omega}$ is a computable term that can be used as an error estimator.
\end{remark}

The triangle inequality proves the following efficiency-type estimate.
\begin{theorem}[local estimate for $\rho_{r}(T)$]\label{thm:apost:membrane:eff}
  Under the framework of Theorem~\ref{thm:apost:membrane:rel} we have, for all $T\in\mathcal{T}$, that
  \begin{align*}
    \rho_r(T)^2 \lesssim \|\bsigma-\bsigma_h\|_T^2 + \|\nabla (u - J_h u_h)\|_T^2. 
  \end{align*}
\end{theorem}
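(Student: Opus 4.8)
The plan is to observe that $\rho_r(T)^2 = \|\bsigma_h - \nabla J_h u_h\|_T^2$ by definition, and then to split this term using the continuous solution. By the equivalence Proposition~\ref{prop:equivalent_obstacle} we have $\bsigma = \nabla u$, so we may insert $\pm\bsigma = \pm\nabla u$ and write
\begin{align*}
  \bsigma_h - \nabla J_h u_h = (\bsigma_h - \bsigma) + \nabla(u - J_h u_h).
\end{align*}

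First I would apply the triangle inequality in $L^2(T)$ to obtain $\|\bsigma_h-\nabla J_h u_h\|_T \leq \|\bsigma-\bsigma_h\|_T + \|\nabla(u-J_hu_h)\|_T$. Then squaring and using $(a+b)^2\leq 2a^2+2b^2$ yields
\begin{align*}
  \rho_r(T)^2 = \|\bsigma_h-\nabla J_h u_h\|_T^2 \leq 2\|\bsigma-\bsigma_h\|_T^2 + 2\|\nabla(u-J_h u_h)\|_T^2,
\end{align*}
which is the claimed bound with the implied constant equal to $2$.

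There is essentially no obstacle here: the statement is a direct consequence of the identity $\bsigma=\nabla u$ together with the triangle inequality, which is exactly what the sentence preceding the theorem in the excerpt announces. The only point worth stating explicitly is that $\bsigma=\nabla u$ is available (from the equivalence result), since this is what allows $\bsigma-\nabla J_h u_h$ to be rewritten as the gradient of the postprocessing error $u-J_h u_h$.
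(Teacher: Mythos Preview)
Your proof is correct and follows exactly the approach indicated in the paper: the paper simply states that the triangle inequality proves the estimate, and you have spelled out precisely that argument, using $\bsigma=\nabla u$ to insert the exact solution and then applying the triangle inequality in $L^2(T)$.
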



\subsection{A posteriori error analysis for the plate obstacle problem}\label{sec:apost:plate}
For the plate obstacle problem we follow similar ideas as in section~\ref{sec:apost:elastic}. 
Let $U_h^\mathrm{HCT}\subset H_0^2(\Omega)$ denote the Hsieh--Clough--Tocher finite element space~\cite[chapter 3.7]{BrennerScott} and let $E_h\colon \mathcal{P}^3(\mathcal{T})\to U_h^\mathrm{HCT}$ denote the operator defined in~\cite[section~4]{MR2777246}. 
We note that the authors of~\cite{MR2777246} define the operator for $\mathcal{P}^2(\mathcal{T})$ but the same definition can be used in our situation, i.e., for $\mathcal{P}^3(\mathcal{T})$. 
Let $B_h$ denote a first-order moment preserving operator given by
\begin{align*}
  B_h v|_T = \sum_{z\in\mathcal{V}_T} \ip{v}{\nu_{T,z}}_T\eta_T^2\eta_z|_T,
\end{align*}
where $\eta_T = \prod_{z\in \mathcal{V}_T} \eta_z|_T$ (and extended to $0$ outside of $T$) and $\nu_{T,z}$ ($z\in \mathcal{V}_T$) form a dual basis of $\mathcal{P}^1(T)$
with $\ip{\nu_{T,z}}{\eta_T^2\eta_{z'}}_T = \delta_{z,z'}$.
For an arbitrary $p=\sum_{z'\in\mathcal{V}_T}\alpha_{z'}\eta_{z'}|_T \in \mathcal{P}^1(T)$ with $\alpha_{z'}\in \mathbb{R}$, we have
\begin{align*}
  \ip{B_h v}{p}_T = \sum_{z\in\mathcal{V}_T} \ip{v}{\nu_{T,z}}_T \ip{\eta_T^2\eta_z}{p}_T 
  = \sum_{z\in\mathcal{V}_T}\sum_{z'\in\mathcal{V}_T} \alpha_{z'} \ip{v}{\nu_{T,z}}_T \ip{\eta_T^2\eta_z}{\nu_{T,z'}}_T = \ip{v}{p}_T.
\end{align*}
We thus conclude that $\Pi_h^1B_h v= \Pi_h^1 v$.
It is straightforward to verify that $\|B_hv\|_T\lesssim \|v\|_T$ for all $T\in\mathcal{T}$. Next, define
\begin{align*}
  J_h^\mathrm{HCT}v = E_h v + B_h(1-E_h)v.
\end{align*}
Clearly, $J_h^\mathrm{HCT}v\in H_0^2(\Omega)$. 
In the next result we collect some important properties of this operator. 
We use the common notation $[\cdot]$ for jumps across element interfaces, i.e., $[v]|_E$ denotes the jump across the edge $E\in\mathcal{E}$. 
\begin{lemma}[properties of $J_h^\mathrm{HCT}$]
\label{lem:estHCT}
  Operator $J_h^\mathrm{HCT}$ satisfies $\Pi_h^1J_h^\mathrm{HCT} = \Pi_h^1$ and the estimates
  \begin{align}
    \sum_{T\in\mathcal{T}}h_T^{-4} \|v-J_h^\mathrm{HCT}v\|_T^2 + \|\Grad\nabla(v-J_h^\mathrm{HCT}v)\|_T^2
    &\lesssim \sum_{T\in\mathcal{T}} h_T^{-3}\|[v]\|_{\partial T}^2 + h_T^{-1}\|[\partial_\normal v]\|_{\partial T}^2, \\
    h_T^{-2}\|v-J_h^\mathrm{HCT}v\|_{L^\infty(T)}^2 
    &\lesssim \sum_{E\in\mathcal{E}, \mathcal{V}_E\cap\mathcal{V}_T\neq \emptyset} h_T^{-3}\|[v]\|_{E}^2 + h_T^{-1}\|[\partial_\normal v]\|_{E}^2
  \end{align}
  for all $v\in \mathcal{P}^3(\mathcal{T})$ and $T\in\mathcal{T}$.
\end{lemma}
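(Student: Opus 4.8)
The strategy is to exploit the decomposition $v-J_h^\mathrm{HCT}v=(1-B_h)(v-E_hv)$ and to reduce all three assertions to known estimates for the Hsieh--Clough--Tocher enrichment operator $E_h$ together with elementary scaling and inverse estimates for the corrector $B_h$. The identity $\Pi_h^1J_h^\mathrm{HCT}=\Pi_h^1$ is immediate: by linearity $\Pi_h^1J_h^\mathrm{HCT}v=\Pi_h^1E_hv+\Pi_h^1B_h(v-E_hv)$, and since $\Pi_h^1B_hw=\Pi_h^1w$ for every $w$ (established right before the statement), the right-hand side equals $\Pi_h^1E_hv+\Pi_h^1(v-E_hv)=\Pi_h^1v$.

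For the two quantitative estimates I would first recall the relevant properties of $E_h$; although the construction in~\cite[Section~4]{MR2777246} is carried out for $\mathcal{P}^2(\mathcal{T})$, the same arguments apply to $\mathcal{P}^3(\mathcal{T})$, and in the form needed here for discontinuous piecewise polynomials one simply also picks up the jump terms $[w]$. Concretely, for all $w\in\mathcal{P}^3(\mathcal{T})$ and $T\in\mathcal{T}$ one has the local bound
\begin{align*}
  h_T^{-4}\|w-E_hw\|_T^2 &+ h_T^{-2}\|w-E_hw\|_{L^\infty(T)}^2 + \|\Grad\nabla(w-E_hw)\|_T^2 \\
  &\lesssim \sum_{E\in\mathcal{E},\,\mathcal{V}_E\cap\mathcal{V}_T\neq\emptyset} h_T^{-3}\|[w]\|_E^2 + h_T^{-1}\|[\partial_\normal w]\|_E^2,
\end{align*}
and summing the first and third terms over $T\in\cT$ recovers the global estimate of the first display with $J_h^\mathrm{HCT}$ replaced by $E_h$. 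On the corrector side I would record three elementary facts, all depending only on the shape-regularity of $\cT$: (i) $\|B_hw\|_T\lesssim\|w\|_T$ for $w\in L^2(\Omega)$, which follows from $\|\nu_{T,z}\|_T\lesssim h_T^{-1}$, $\|\eta_T^2\eta_z\|_T\lesssim h_T$ and is already noted in the text; (ii) the inverse estimate $\|\Grad\nabla B_hw\|_T\lesssim h_T^{-2}\|B_hw\|_T$, valid because $B_hw|_T$ is a polynomial of fixed degree on $T$; and (iii) $\|B_hw\|_{L^\infty(T)}\lesssim h_T^{-1}\|B_hw\|_T$, by norm equivalence on finite-dimensional spaces and scaling.

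Finally, with $w:=v-E_hv$ and $v-J_h^\mathrm{HCT}v=(1-B_h)w$, the triangle inequality combined with (i)--(iii) yields, for every $T\in\cT$, the bounds $\|v-J_h^\mathrm{HCT}v\|_T\lesssim\|w\|_T$, $\|\Grad\nabla(v-J_h^\mathrm{HCT}v)\|_T\lesssim\|\Grad\nabla w\|_T+h_T^{-2}\|w\|_T$, and $\|v-J_h^\mathrm{HCT}v\|_{L^\infty(T)}\lesssim\|w\|_{L^\infty(T)}+h_T^{-1}\|w\|_T$. Inserting these into the left-hand sides of the two claimed estimates and invoking the displayed $E_h$-bounds (using also $h_T^{-2}(h_T^{-1}\|w\|_T)^2=h_T^{-4}\|w\|_T^2$, which is again controlled by the local jump terms) concludes the proof. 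I expect the only genuinely substantive ingredient to be this family of estimates for $E_h$ on the cubic space --- in particular the pointwise one --- which I would either cite from~\cite{MR2777246} or reprove along the same lines; the rest is routine bookkeeping with shape-regularity constants, the one point deserving care being the passage from the element boundary $\partial T$ appearing in the first estimate to the vertex patch $\{E:\mathcal{V}_E\cap\mathcal{V}_T\neq\emptyset\}$ appearing in the second.
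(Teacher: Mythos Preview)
Your proposal is correct and follows essentially the same approach as the paper: decompose $v-J_h^\mathrm{HCT}v=(1-B_h)(1-E_h)v$, reduce to the $E_h$-estimate from~\cite{MR2777246} (extended verbatim to $\mathcal{P}^3(\mathcal{T})$), and finish with local boundedness of $B_h$ plus inverse estimates. The paper is somewhat terser---it only states the $L^2$ bound $h_T^{-4}\|v-E_hv\|_T^2\lesssim(\text{jumps})$ explicitly and defers the Hessian and $L^\infty$ bounds to a one-line appeal to inverse estimates---whereas you spell out properties (i)--(iii) of $B_h$ separately; but the substance is identical.
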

\begin{proof}
  First, $\Pi_h^1J_h^\mathrm{HCT} = \Pi_h^1 E_h + \Pi_h^1B_h(1-E_h) = \Pi_h^1$ by the properties of $B_h$. 
  Second,
  \begin{align*}
    \|v-J_h^\mathrm{HCT}v\|_T = \|(1-B_h)(1-E_h)v\|_T \lesssim \|(1-E_h)v\|_T.
  \end{align*}
  Then, following the very same arguments as in the proof of~\cite[Lemma~4.1]{MR2777246} shows that
  \begin{align*}
    h_T^{-4}\|v-E_hv\|_T^2 \lesssim \sum_{E\in\mathcal{E},\,\mathcal{V_T}\cap\mathcal{V}_E\neq\emptyset} h_T^{-3}\|[v]\|_{E}^2 + h_T^{-1}\|[\partial_\normal v]\|_{E}^2.
  \end{align*}
  Finally, the remaining estimates follow by inverse estimates.
\end{proof}

Let $(\bM_h,\lambda_h,u_h)$ denote the solution of~\eqref{eq:plate:weak_mixed_discrete}. 
We define the locally postprocessed solution $u_h^\star \in \mathcal{P}^3(\mathcal{T})$ on each $T\in\mathcal{T}$ by
\begin{align*}
  \ip{\Grad\nabla u_h^\star}{\Grad\nabla v}_T &= \ip{\bM_{h}}{\Grad\nabla v}_T \quad\forall v\in \mathcal{P}^3(T), \\
  \Pi_h^1u_h^\star|_T &= u_h|_T.
\end{align*}

Define the indicators resp. data oscillations %
\begin{align*}
  \xi_r(T)^2 &= \|\bM_{h} -\Grad\nabla u_h^\star\|_T^2 + h_T^{-3}\|[u_h^\star]\|_{\partial T}^2 + h_T^{-1}\|[\partial_\normal u_h^\star]\|_{\partial T}^2, \\
  \mathrm{osc}(T)^2 &= h_T^4\|(1-\Pi_h^1)f\|_T^2,
  \\ 
  \xi_r^2 &= \sum_{T\in\mathcal{T}} \xi_r(T)^2, \quad \mathrm{osc}^2 = \sum_{T\in\mathcal{T}} \mathrm{osc}(T)^2,
\end{align*}
and
\begin{align*}
  \xi_\infty &= \|(g-u_h^\star)_+\|_{L^\infty(\Omega)} + \max_{T\in\mathcal{T}} \sum_{E\in\mathcal{E},\, \mathcal{V}_E\cap\mathcal{V}_T\neq \emptyset}
  h_T^{-3/2}\|[u_h^\star]\|_{E} + h_T^{-1/2}\|[\partial_\normal u_h^\star]\|_{E}.
\end{align*}

\begin{theorem}[reliability]
\label{thm:apost:plate:rel}
  Let $(\bM,\lambda,u)$ denote the solution of~\eqref{eq:plate:weak_mixed} and let $(\bM_h,\lambda_h,u_h)$ denote the solution of~\eqref{eq:plate:weak_mixed_discrete}. Then, 
  \begin{align*}
    \|\Grad\nabla(u-J_h^\mathrm{HCT} u_h^\star)\|_\Omega^2 + \|\bM-\bM_h\|_\Omega^2 + \|\lambda-\lambda_h\|_{-2}^2 
    &\lesssim \xi_r^2 + \mathrm{osc}^2 + \lambda(\Omega)\xi_\infty.
  \end{align*}
\end{theorem}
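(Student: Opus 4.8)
The plan is to apply Lemma~\ref{lem:apost:plate} with the choice $v=J_h^{\mathrm{HCT}}u_h^\star$, $\bN=\bM_h$, $\mu=\lambda_h$, and then bound each term on its right-hand side by $\xi_r$, $\mathrm{osc}$, or $\lambda(\Omega)\xi_\infty$. First I would verify the hypotheses of that lemma: $J_h^{\mathrm{HCT}}u_h^\star\in H_0^2(\Omega)$ by Lemma~\ref{lem:estHCT}; $\lambda_h\in M_h=\mathcal{P}^1(\cT)\subset L^2(\Omega)$ with $\lambda_h\geq 0$ and $\bM_h\in\mathbb{X}(\cT)\subset\mathbb{H}(\dDiv;\Omega)$, so $(\bM_h,\lambda_h)\in K_2$ with $\mu=\lambda_h\in L^2(\Omega)$; and $\dDiv\bM_h-\lambda_h=\Pi_h^1 f$ by Proposition~\ref{prop:plate:properties}(i). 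Since the norm on $H_0^2(\Omega)$ is $\|\Grad\nabla\,\cdot\,\|_\Omega$, the left-hand side of Lemma~\ref{lem:apost:plate} with these choices is exactly the left-hand side of the theorem.

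For the right-hand side, $\|h_\cT^2(1-\Pi_h^1)f\|_\Omega^2=\mathrm{osc}^2$ by definition. For $\|\bM_h-\Grad\nabla v\|_\Omega^2$ I would insert $\pm\Grad\nabla u_h^\star$ and use the triangle inequality to obtain the bound $\lesssim\|\bM_h-\Grad\nabla u_h^\star\|_\Omega^2+\|\Grad\nabla(u_h^\star-J_h^{\mathrm{HCT}}u_h^\star)\|_\Omega^2$; the first summand equals $\sum_T\|\bM_h-\Grad\nabla u_h^\star\|_T^2\le\xi_r^2$, and the second is controlled by the $H^2$-type estimate of Lemma~\ref{lem:estHCT} by $\sum_T\bigl(h_T^{-3}\|[u_h^\star]\|_{\partial T}^2+h_T^{-1}\|[\partial_\normal u_h^\star]\|_{\partial T}^2\bigr)\le\xi_r^2$.

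The term $\ip{\lambda_h}{v-g}_\Omega$ should vanish identically. Splitting $v-g=(v-u_h)+(u_h-g)$, the second part pairs to zero with $\lambda_h$ by the discrete complementarity of Proposition~\ref{prop:plate:properties}(iii); for the first, $\Pi_h^1 v=\Pi_h^1J_h^{\mathrm{HCT}}u_h^\star=\Pi_h^1 u_h^\star=u_h$ by Lemma~\ref{lem:estHCT} and the definition of $u_h^\star$, so $v-u_h=(1-\Pi_h^1)v$ is $L^2$-orthogonal to $\lambda_h\in\mathcal{P}^1(\cT)$. It remains to bound $\int_\Omega(g-v)_+\,\mathrm{d}\lambda\le\lambda(\Omega)\,\|(g-v)_+\|_{L^\infty(\Omega)}$, using that $\lambda$ is a positive Radon measure. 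Since $(a+b)_+\le a_++|b|$, with $a=g-u_h^\star$ and $b=u_h^\star-J_h^{\mathrm{HCT}}u_h^\star$ we get $\|(g-v)_+\|_{L^\infty(\Omega)}\le\|(g-u_h^\star)_+\|_{L^\infty(\Omega)}+\|u_h^\star-J_h^{\mathrm{HCT}}u_h^\star\|_{L^\infty(\Omega)}$; the $L^\infty$ estimate of Lemma~\ref{lem:estHCT}, together with the boundedness of $h_\cT$ and of the number of edges in a vertex patch, yields $\|u_h^\star-J_h^{\mathrm{HCT}}u_h^\star\|_{L^\infty(\Omega)}\lesssim\xi_\infty$. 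Collecting the four estimates completes the proof.

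I expect the only genuinely delicate point to be the vanishing of $\ip{\lambda_h}{v-g}_\Omega$: this is where the construction really matters, relying on the moment-preservation identity $\Pi_h^1J_h^{\mathrm{HCT}}=\Pi_h^1$ built into $J_h^{\mathrm{HCT}}$ through $B_h$ and on the constraint $\Pi_h^1 u_h^\star=u_h$ in the definition of the postprocessed solution; without these the discrete complementarity between $\lambda_h$ and $u_h$ cannot be transferred to the smoothed function $v$. The second point worth emphasizing is that $\lambda$ is only a measure, not an $L^2$ function, which is precisely why the obstacle-violation term must be estimated in $L^\infty$ and why the pointwise bound of Lemma~\ref{lem:estHCT} is needed here rather than an $H^2$-type bound.
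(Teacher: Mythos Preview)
Your proposal is correct and follows essentially the same route as the paper's proof: apply Lemma~\ref{lem:apost:plate} with $(\bN,\mu,v)=(\bM_h,\lambda_h,J_h^{\mathrm{HCT}}u_h^\star)$, bound $\|\bM_h-\Grad\nabla v\|_\Omega^2$ via the triangle inequality and Lemma~\ref{lem:estHCT}, show $\ip{\lambda_h}{v-g}_\Omega=0$ using $\Pi_h^1J_h^{\mathrm{HCT}}=\Pi_h^1$ together with $\Pi_h^1u_h^\star=u_h$ and discrete complementarity, and estimate the measure term in $L^\infty$ by the pointwise bound of Lemma~\ref{lem:estHCT}. Your verification of the hypotheses and the explicit mention of absorbing the factor $h_T$ (via $h_T\lesssim\diam(\Omega)$) in the $L^\infty$ step are slightly more detailed than the paper, but the argument is the same.
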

\begin{proof}
  We employ Lemma~\ref{lem:apost:plate} with $\bN = \bM_h$, $v = J_h^\mathrm{HCT}u_h^\star$ and $\mu=\lambda_h$ which yields
  \begin{align*}
    \|\Grad\nabla(u & -J_h^\mathrm{HCT} u_h^\star)\|_\Omega^2 + \|\bM-\bM_h\|_\Omega^2 + \|\lambda-\lambda_h\|_{-2}^2 \\
    &\lesssim 
    \|\bM_h-\Grad\nabla J_h^\mathrm{HCT}u_h^\star\|_\Omega^2 
    + \|h_{\mathcal{T}}^2(1-\Pi_h^1)f\|_\Omega^2 
    + \ip{\lambda_h}{J_h^\mathrm{HCT}u_h^\star-g}_\Omega + \int_\Omega(g-J_h^\mathrm{HCT}u_h^\star)_{+}\,\mathrm{d}\lambda. \nonumber
  \end{align*}
  With Lemma~\ref{lem:estHCT} we further estimate
  \begin{align}\label{eq:M_h-u_star}
    \|\bM_h-\Grad\nabla J_h^\mathrm{HCT}u_h^\star\|_\Omega^2 &\lesssim \sum_{T\in\cT} \Big(\|\bM_{h} -\Grad\nabla u_h^\star\|_T^2 +  h_T^{-3}\|[u_h^\star]\|_{\partial T}^2 + h_T^{-1}\|[\partial_\normal u_h^\star]\|_{\partial T}^2\Big).
  \end{align}
  The use of $\Pi_h^1J_h^\mathrm{HCT}=\Pi_h^1$, $\lambda_h\in \mathcal{P}^1(\mathcal{T})$, $\Pi_h^1u_h^\star = u_h$, and $\ip{\lambda_h}{u_h-g}_\Omega = 0$ leads to 
  \begin{align}\label{eq:identity_jhHCT}
    \ip{\lambda_h}{J_h^\mathrm{HCT}u_h^\star-g}_\Omega = \ip{\lambda_h}{\Pi_h^1u_h^\star-g}_\Omega = \ip{\lambda_h}{u_h-g}_\Omega = 0.
  \end{align}
  It remains to estimate $\int_\Omega(g-J_h^\mathrm{HCT}u_h^\star)_+\,\mathrm{d}\lambda$. We do this by following the same steps from the proof of~\cite[Theorem~4.2]{MR3595879}.
  We have that, with $\lambda(\Omega)$ denoting the $\lambda$-measure of $\Omega$,
  \begin{align*}
\int_\Omega(g-J_h^\mathrm{HCT}u_h^\star)_+\,\mathrm{d}\lambda \leq \lambda(\Omega)(\|(g-u_h^\star)_+\|_{L^\infty(\Omega)} + \|u_h^\star-J_h^\mathrm{HCT}u_h^\star\|_{L^\infty(\Omega)}).
  \end{align*}
  Then, with Lemma~\ref{lem:estHCT} we conclude that
  \begin{align*}
    \|u_h^\star-J_h^\mathrm{HCT}u_h^\star\|_{L^\infty(\Omega)} \lesssim \max_{T\in\mathcal{T}} \sum_{E\in\mathcal{E},\, \mathcal{V}_E\cap\mathcal{V}_T\neq \emptyset}
    h_T^{-3/2}\|[u_h^\star]\|_{E} + h_T^{-1/2}\|[\partial_\normal u_h^\star]\|_{E}.
  \end{align*}
  This finishes the proof.
\end{proof}

The previous results can be seen as an analogous version of the reliability estimate presented in \cite[Thereom 4.2]{MR3595879}. 
As in that case, since the term $\lambda(\Omega)$ is not known, the obtained error bound is not a genuine a posteriori error estimate \cite[Remark 4.3]{MR3595879}.
In spite of this fact, it can be proved that $\lambda(\Omega) \leq C$, where $C>0$ is a computable constant (see \cite[Remark 4.4]{MR3595879}), and thus use such a constant to remove the term $\lambda(\Omega)$ from Theorem \ref{thm:apost:plate:rel} to obtain a genuine a posteriori error estimate.

Let $\varepsilon > 0$ be sufficiently small (cf. Proposition \ref{prop:max_h02}). Given $T\in\mathcal{T}$, we introduce the local error indicators
\begin{align*}
  \xi_{p}^{2}(T):=\|\Grad\nabla M_{\varepsilon}\{g-J_h^\mathrm{HCT}u_h^\star,0\}\|_{T}^{2}, 
\qquad
\xi_{c}^{2}(T):=\ip{\lambda_{h}}{M_{\varepsilon}\{g-J_h^\mathrm{HCT}u_h^\star,0\}}_{T}.
\end{align*}
In the next result we prove a different reliability estimate for the plate obstacle problem by using the regularized maximum function $M_{\varepsilon}$.

\begin{theorem}[reliability]
\label{thm:apost:plate:rel_max}
  Let $(\bM,\lambda,u)$ denote the solution of~\eqref{eq:plate:weak_mixed} and let $(\bM_h,\lambda_h,u_h)$ denote the solution of~\eqref{eq:plate:weak_mixed_discrete}. Then, 
  \begin{align*}
    \|\Grad\nabla(u-J_h^\mathrm{HCT} u_h^\star)\|_\Omega^2 + \|\bM-\bM_h\|_\Omega^2 + \|\lambda-\lambda_h\|_{-2}^2 
    \lesssim 
    \sum_{T\in\mathcal{T}}\left(\xi_r^2(T) + \mathrm{osc}(T)^2 + \xi_{p}^{2}(T) + \xi_{c}^{2}(T)\right) =: \xi^2.
  \end{align*}
\end{theorem}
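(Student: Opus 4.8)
The plan is to apply Lemma~\ref{lem:apost:plate} with $\bN=\bM_h$, $\mu=\lambda_h$, and the postprocessed conforming function $v=J_h^{\mathrm{HCT}}u_h^\star\in H_0^2(\Omega)$, exactly as in the proof of Theorem~\ref{thm:apost:plate:rel}. Its hypotheses hold: $\lambda_h\in\mathcal{P}^1(\mathcal{T})\subset L^2(\Omega)$, $\lambda_h\geq 0$, and $\dDiv\bM_h-\lambda_h=\Pi_h^1f$ by Proposition~\ref{prop:plate:properties}, so that $(\bM_h,\lambda_h)\in K_2$. This yields
\begin{align*}
  \|(u-J_h^{\mathrm{HCT}}u_h^\star,\bM-\bM_h,\lambda-\lambda_h)\|^2
  &\lesssim \|\bM_h-\Grad\nabla J_h^{\mathrm{HCT}}u_h^\star\|_\Omega^2 + \|h_{\mathcal{T}}^2(1-\Pi_h^1)f\|_\Omega^2 \\
  &\quad + \ip{\lambda_h}{J_h^{\mathrm{HCT}}u_h^\star-g}_\Omega + \int_\Omega(g-J_h^{\mathrm{HCT}}u_h^\star)_+\,\mathrm{d}\lambda,
\end{align*}
where the left-hand norm is that of $H_0^2(\Omega)\times[L^2(\Omega)]^{n\times n}\times H^{-2}(\Omega)$. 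The first term on the right is controlled by $\sum_{T}\xi_r^2(T)$ via Lemma~\ref{lem:estHCT} and estimate~\eqref{eq:M_h-u_star}, the second equals $\sum_T\mathrm{osc}(T)^2$, and the third vanishes by~\eqref{eq:identity_jhHCT}; hence only the measure term $\int_\Omega(g-J_h^{\mathrm{HCT}}u_h^\star)_+\,\mathrm{d}\lambda$ remains to be estimated.

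To handle it, set $w:=M_\varepsilon\{g-J_h^{\mathrm{HCT}}u_h^\star,0\}$. Since $g\in H^2(\Omega)\cap C^1(\overline\Omega)$ and $J_h^{\mathrm{HCT}}u_h^\star\in H_0^2(\Omega)$ is a globally $C^1$ piecewise cubic, $g-J_h^{\mathrm{HCT}}u_h^\star\in H^2(\Omega)\cap C^1(\overline\Omega)$, and it equals $g<0$ on $\partial\Omega$. Using the symmetry $M_\varepsilon\{a,b\}=M_\varepsilon\{b,a\}$ and Proposition~\ref{prop:max_h02} (applied with first argument $0\in H_0^2(\Omega)$ and second argument $g-J_h^{\mathrm{HCT}}u_h^\star$), there is a threshold $\varepsilon_0>0$ such that $w\in H_0^2(\Omega)$ for all $0<\varepsilon<\varepsilon_0$. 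Since $M_\varepsilon\{\cdot,0\}\geq(\cdot)_+\geq0$ pointwise and $\lambda$ is a nonnegative Radon measure, $\int_\Omega(g-J_h^{\mathrm{HCT}}u_h^\star)_+\,\mathrm{d}\lambda\leq\int_\Omega w\,\mathrm{d}\lambda=\dual{\lambda}{w}$. Now write $\dual{\lambda}{w}=\dual{\lambda-\lambda_h}{w}+\ip{\lambda_h}{w}_\Omega$, which is legitimate because $\lambda_h\in L^2(\Omega)$: the second summand equals $\sum_T\ip{\lambda_h}{w}_T=\sum_T\xi_c^2(T)$ by definition of $\xi_c$, and the first is bounded by $\|\lambda-\lambda_h\|_{-2}\|w\|_2=\|\lambda-\lambda_h\|_{-2}\big(\sum_T\xi_p^2(T)\big)^{1/2}$, using $\|w\|_2=\|\Grad\nabla w\|_\Omega$ and the definition of $\xi_p$. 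Young's inequality with a small parameter $\delta>0$ then gives $\dual{\lambda-\lambda_h}{w}\leq\tfrac{\delta}{2}\|\lambda-\lambda_h\|_{-2}^2+\tfrac{1}{2\delta}\sum_T\xi_p^2(T)$.

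Collecting the bounds and absorbing $\tfrac{\delta}{2}\|\lambda-\lambda_h\|_{-2}^2$ into the left-hand side for $\delta$ small enough — which is allowed since $\|\lambda-\lambda_h\|_{-2}^2$ is one of the three quantities bounded on the left — yields $\|\Grad\nabla(u-J_h^{\mathrm{HCT}}u_h^\star)\|_\Omega^2+\|\bM-\bM_h\|_\Omega^2+\|\lambda-\lambda_h\|_{-2}^2\lesssim\xi^2$. The only step genuinely beyond the proof of Theorem~\ref{thm:apost:plate:rel} is the claim $w\in H_0^2(\Omega)$, which is precisely where Proposition~\ref{prop:max_h02} and the strict inequality $g<0$ on $\partial\Omega$ enter; I expect the main care point to be checking that the admissibility threshold $\varepsilon_0$ can be taken independently of the mesh, which it can because $J_h^{\mathrm{HCT}}u_h^\star$ and its normal derivative vanish on $\partial\Omega$, so $g-J_h^{\mathrm{HCT}}u_h^\star\equiv g$ there and $\varepsilon_0$ depends only on $-\max_{\partial\Omega}g>0$.
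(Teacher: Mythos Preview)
Your proof is correct and essentially coincides with the paper's argument. The only organizational difference is that you invoke Lemma~\ref{lem:apost:plate} to package the initial steps (which already splits the duality term and yields $\ip{\lambda_h}{v-g}_\Omega+\int_\Omega(g-v)_+\,\mathrm{d}\lambda$), whereas the paper starts directly from the norm equivalence~\eqref{eq:normequiv:b} and splits $\dual{\lambda-\lambda_h}{u-v}$ by hand; both routes then bound the measure term via $M_\varepsilon\{g-v,0\}\in H_0^2(\Omega)$, decompose $\dual{\lambda}{w}=\dual{\lambda-\lambda_h}{w}+\ip{\lambda_h}{w}_\Omega$, and absorb with Young's inequality in the same way. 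Your closing remark on a mesh-independent $\varepsilon_0$ is a nice observation but not needed for the theorem as stated, since $\varepsilon$ is fixed per mesh; note, however, that your argument for it is incomplete: knowing $g-J_h^{\mathrm{HCT}}u_h^\star=g$ on $\partial\Omega$ does not by itself control the width of the boundary strip where $J_h^{\mathrm{HCT}}u_h^\star-g>\varepsilon_0$, as $J_h^{\mathrm{HCT}}u_h^\star$ could in principle dip below $g$ arbitrarily close to $\partial\Omega$.
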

\begin{proof} We use \eqref{eq:normequiv:b} with $w= u - J_h^\mathrm{HCT}u_h^\star, \bN = \bM - \bM_{h}$, and $\chi = \lambda - \lambda_{h}$, in conjunction with $\bM=\Grad\nabla u$, $\dDiv \bM - \lambda = f$, and property \textrm{(i)} from Proposition \ref{prop:plate:properties} to obtain 
\begin{align*}
    \|\Grad\nabla(u-J_h^\mathrm{HCT} u_h^\star)\|_\Omega^2 & + \|\bM-\bM_h\|_\Omega^2  + \|\lambda-\lambda_h\|_{-2}^2 \\
     &\lesssim
     \|(1 - \Pi_{h}^{1})f\|_{-2}^{2} + \|\bM_{h} - \Grad\nabla  J_h^\mathrm{HCT}u_h^\star\|_{\Omega}^{2} + \dual{\lambda - \lambda_{h}}{u -  J_h^\mathrm{HCT}u_h^\star}.
\end{align*}
Utilizing the estimates $\|(1 - \Pi_{h}^{1})f\|_{-2}^{2}\leq \|h_{\mathcal{T}}^{2}(1 - \Pi_{h}^{1})f\|_{\Omega}^{2}$ and \eqref{eq:M_h-u_star} in the previous bound results in
\begin{align*}
    &\|\Grad\nabla(u-J_h^\mathrm{HCT} u_h^\star)\|_\Omega^2 + \|\bM-\bM_h\|_\Omega^2 + \|\lambda-\lambda_h\|_{-2}^2 \\
 & \lesssim    \|h_{\mathcal{T}}^{2}(1 - \Pi_{h}^{1})f\|_{\Omega}^{2} 
  + \sum_{T\in\cT} \Big(\|\bM_{h} -\Grad\nabla u_h^\star\|_T^2 +  h_T^{-3}\|[u_h^\star]\|_{\partial T}^2 + h_T^{-1}\|[\partial_\normal u_h^\star]\|_{\partial T}^2\Big) + \dual{\lambda - \lambda_{h}}{u -  J_h^\mathrm{HCT}u_h^\star}\\
  &\lesssim  \xi^2 + \dual{\lambda - \lambda_{h}}{u -  J_h^\mathrm{HCT}u_h^\star}.
\end{align*}
We now estimate $\dual{\lambda - \lambda_{h}}{u -  J_h^\mathrm{HCT}u_h^\star} = \dual{\lambda}{u -  J_h^\mathrm{HCT}u_h^\star} - \dual{\lambda_{h}}{u -  J_h^\mathrm{HCT}u_h^\star} =: \mathsf{I} - \mathsf{II}$. To estimate $\mathsf{II}$, we use that $\lambda_{h}\geq 0$, $g-u\leq 0$, and identity \eqref{eq:identity_jhHCT} to obtain
\begin{equation*}
- \mathsf{II} = \ip{\lambda_{h}}{J_h^\mathrm{HCT}u_h^\star - u}_{\Omega} = \ip{\lambda_{h}}{J_h^\mathrm{HCT}u_h^\star - g}_{\Omega} + \ip{\lambda_{h}}{g - u}_{\Omega} \leq  0.
\end{equation*}
On the other hand, using that $\int_{\Omega}(u-g)\,\mathrm{d}\lambda=0$, $\lambda\geq 0$, and $M_{\varepsilon}\{g,J_h^\mathrm{HCT}u_h^\star\} \geq \max\{g,J_h^\mathrm{HCT}u_h^\star\}$, we arrive at
\begin{equation*}
\mathsf{I} 
=
\int_{\Omega}(g -  \max\{g,J_h^\mathrm{HCT}u_h^\star\})\,\mathrm{d}\lambda + \dual{\lambda}{\max\{g,J_h^\mathrm{HCT}u_h^\star\} -  J_h^\mathrm{HCT}u_h^\star}
 \leq 
 \dual{\lambda}{M_{\varepsilon}\{g,J_h^\mathrm{HCT}u_h^\star\} -  J_h^\mathrm{HCT}u_h^\star}.
\end{equation*}
From this estimate, it follows that $\mathsf{I} \leq \dual{\lambda - \lambda_{h}}{M_{\varepsilon}\{g,J_h^\mathrm{HCT}u_h^\star\} -  J_h^\mathrm{HCT}u_h^\star} + \dual{\lambda_{h}}{M_{\varepsilon}\{g,J_h^\mathrm{HCT}u_h^\star\} -  J_h^\mathrm{HCT}u_h^\star}$, which implies, for all $\delta >0$, that
\begin{align*}
\mathsf{I} 
\leq 
\frac{\delta}{2}\|\lambda - \lambda_{h}\|_{-2}^{2} + \frac{\delta^{-1}}{2}\|\Grad\nabla(M_{\varepsilon}\{g, J_h^\mathrm{HCT}u_h^\star\} -  J_h^\mathrm{HCT}u_h^\star)\|_{\Omega}^{2} +  \ip{\lambda_{h}}{M_{\varepsilon}\{g,J_h^\mathrm{HCT}u_h^\star\} -  J_h^\mathrm{HCT}u_h^\star}_{\Omega}.
\end{align*}
We conclude the proof by using that $M_{\varepsilon}\{g,J_h^\mathrm{HCT}u_h^\star\} = J_h^\mathrm{HCT}u_h^\star + M_{\varepsilon}\{g-J_h^\mathrm{HCT}u_h^\star,0\}$ and taking $\delta$ sufficiently small.
\end{proof}

If we replace $J_{h}^{\text{HCT}}u_{h}^{\star}$ with the continuous solution $u$, then in general $M_\varepsilon\{g-u,0\}\neq 0$ for $0<|u-g|<\varepsilon$. Thus the estimators $\xi_p$ and $\xi_c$ are not consistent with the exact solution.
Despite this limitation, the estimators can be used to drive adaptive refinements, see section~\ref{sec:num_ex:plate} below.


\section{Numerical examples}
\label{sec:num_ex}

In this section we conduct numerical examples that support our theoretical findings for the two particular problems considered in our work. 
The experiments were performed with a code implemented in MATLAB, and the underlying schemes associated to the obstacle problems were solved with the primal-dual active set strategy from \cite[section 4]{KKT03}.

We use the bulk criterion 
\begin{align*}
  \theta \mathrm{est}^2 \leq \sum_{\mathcal{M}\subset\mathcal{T}} \mathrm{est}(T)^2
\end{align*}
to determine a (minimal) set of elements $\mathcal{M}$ that are marked for refinement. 
Here, we employ the newest vertex bisection algorithm for realizing the mesh-refinements. 
For the membrane obstacle problems we use $\mathrm{est} = \rho$ and for the plate obstacle problems we use $\mathrm{est} = \xi$. The adaptive loop consists of repeating the four major steps, \emph{Solve}, \emph{Estimate}, \emph{Mark}, \emph{Refine}. We choose $\theta = \tfrac12$ in the bulk criterion.
For the plate obstacle problem we use $M_\varepsilon\{\cdot,\cdot\}$ with $\varepsilon = 10^{-10}$.


\subsection{Examples for the membrane obstacle problem}\label{sec:num_ex:membrane}


\subsubsection{Smooth solution (\cite[Section~5.3]{MR4050087})}\label{sec:membrane:num_ex_smooth}

We consider $\Omega=(0,1)^{2}$ and set for $(x,y)\in \Omega$, $u(x,y) := x(1-x)y(1-y)$,
\begin{align*}
f(x,y) 
:=
\begin{cases}
\qquad \quad 0 &\text{ if } x < \frac{1}{2},\\
-\Delta u(x,y) &\text{ if } x \geq \frac{1}{2},
\end{cases}
\quad \text{and} \quad
g(x,y)
:=
\begin{cases}
\qquad	u(x,y) &\text{ if } x < \frac{1}{2},\\
\widetilde{g}(x)y(1-y) &\text{ if } x \geq \frac{1}{2},
\end{cases}
\end{align*}
where $\widetilde{g}$ denotes a suitable polynomial of degree 3 such that $g$ and $\nabla g$ are continuous at the line $x=\tfrac{1}{2}$. Then, $u$ solves the obstacle problem \eqref{eq:obstacle_problem} with data $f$ and obstacle $g$. Moreover, $g\in H^{2}(\Omega)$ and $\lambda = -\Delta u - f\in H^{1}(\Omega)$.


\begin{figure}
  \begin{tikzpicture}
  \begin{groupplot}[group style={group size= 2 by 1},width=0.5\textwidth,cycle list/Dark2-6,
                      cycle multiindex* list={
                          mark list*\nextlist
                          Dark2-6\nextlist},
                      every axis plot/.append style={ultra thick},
                      grid=major,
                      xlabel={number of elements $\#\mathcal{T}$},]
         \nextgroupplot[title={Error contributions},ymode=log,xmode=log,
           legend entries={\tiny{$\|u-u_{h}\|_{\Omega}$},\tiny{$\|\boldsymbol\sigma-\boldsymbol\sigma_{h}\|_{\Omega}$}},
                      legend pos=south west]
                \addplot table [x=dofs,y=error_u] {data/Errors_ex1.dat};
                \addplot table [x=dofs,y=error_sigma] {data/Errors_ex1.dat};
                
                \logLogSlopeTriangle{0.9}{0.2}{0.4}{0.5}{black}{{\small $0.5$}};
                \logLogSlopeTriangleBelow{0.8}{0.2}{0.1}{0.5}{black}{{\small $0.5$}}
              \nextgroupplot[title={Estimator contributions},ymode=log,xmode=log,
           legend entries={\tiny{$\rho_{c}$},\tiny{$\rho_{p}$},\tiny{$\rho_{r}$},\tiny{$\rho$}},
                      legend pos=south west]
                \addplot table [x=dofs,y=estLambdaU] {data/Estimators_ex1.dat};
                \addplot table [x=dofs,y=estViol] {data/Estimators_ex1.dat};
                \addplot table [x=dofs,y=estGradVsigma] {data/Estimators_ex1.dat};
                \addplot table [x=dofs,y=estTot] {data/Estimators_ex1.dat};
                
                \logLogSlopeTriangle{0.9}{0.2}{0.55}{0.5}{black}{{\small $0.5$}};
                \logLogSlopeTriangleBelow{0.8}{0.2}{0.1}{0.5}{black}{{\small $0.5$}}
    \end{groupplot}
\end{tikzpicture}
  \caption{Experimental rates of convergence for the errors $\|u - u_{h}\|_{\Omega}$ and $\|\bsigma - \bsigma_{h}\|_{\Omega}$ (left) and the error estimator $\rho$ with its individual contributions (right), with uniform refinement for the problem from Section \ref{sec:membrane:num_ex_smooth}.}
\label{fig:ex_1}
\end{figure}
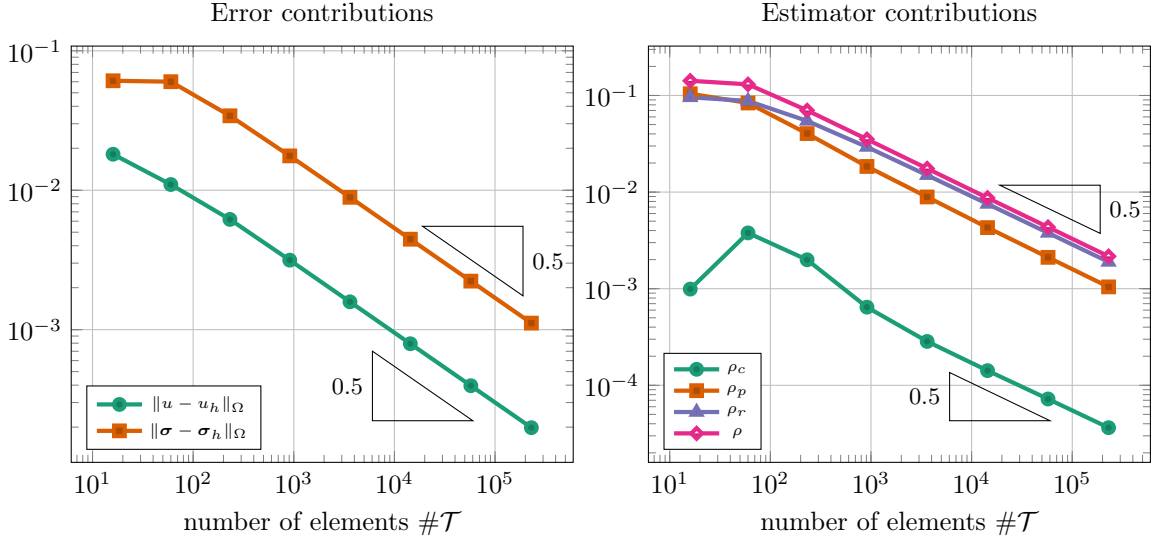

We observe, in Figure \ref{fig:ex_1}, optimal experimental rates of convergence ($\mathcal{O}( (\#\mathcal{T})^{-1/2})\approx \mathcal{O}(h)$) for the errors $\|u-u_{h}\|_{\Omega}$ and $\|\bsigma - \bsigma_{h}\|_{\Omega}$; the same is observed for the a posteriori error estimator $\rho$ and its individual contributions. We recall that $\rho$ is defined in Theorem \ref{thm:apost:membrane:rel}. Note that the convergence rate observed for the errors is in agreement with the error estimate provided in Theorem \ref{thm:error_estimate_membrane}. 


\subsubsection{Unknown solution on L-shaped domain (\cite[Section~5.5]{MR4050087})}\label{sec:membrane:num_ex_unk_L}

In this example, we consider $\Omega=(-1,1)^{2}\setminus [-1,0]^{2}$ and set for $(x,y)\in \Omega$, $f(x,y) = 1$, and a pyramid-like obstacle $g(x,y)=\max\{0,\text{dist}((x,y),\partial\Omega_{u})-\tfrac{1}{4}\}$ for $(x,y)\in\Omega_{u}:=(0,1)^{2}$ and $g(x,y)=0$ for $(x,y)\in\Omega\setminus\Omega_u$.

In Figure \ref{fig:ey_1} we present experimental rates of convergence for the total error estimator $\rho$ and its individual contributions, with uniform and adaptive refinement. We observe that the designed adaptive procedure outperforms uniform refinement. In Figure \ref{fig:ey_2} we present a sequence of adaptively refined meshes and the corresponding approximate solution component $u_{h}$. We observe that the refinement is being concentrated around the re-entrant corner $(0,0)$ and around the point $(\tfrac{1}{2},\tfrac{1}{2})$, which coincides with the tip of the pyramid-like obstacle $g$.


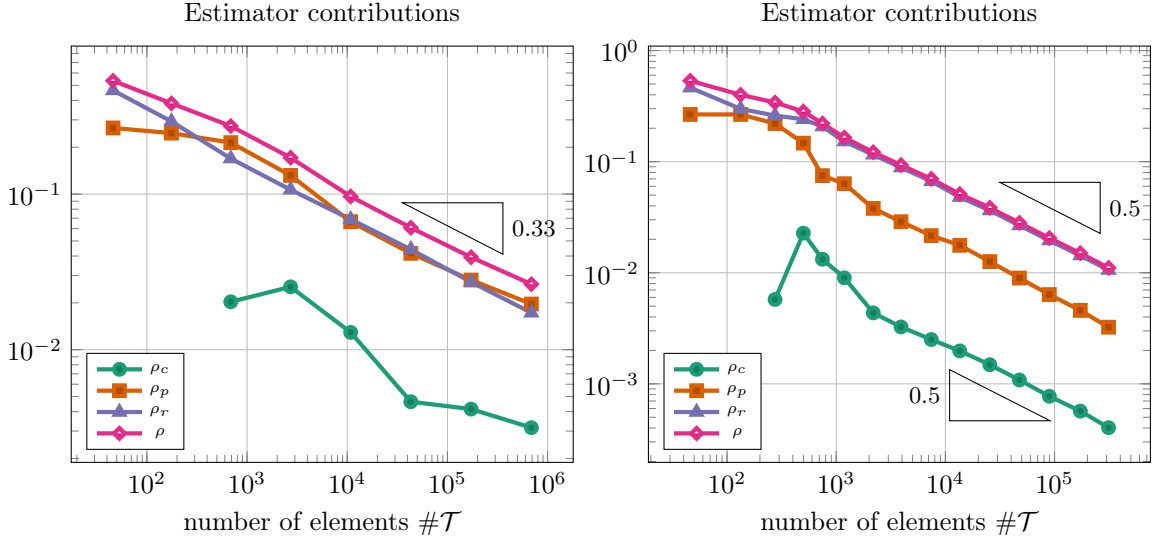
\begin{figure}
  \begin{tikzpicture}
  \begin{groupplot}[group style={group size= 2 by 1},width=0.5\textwidth,cycle list/Dark2-6,
                      cycle multiindex* list={
                          mark list*\nextlist
                          Dark2-6\nextlist},
                      every axis plot/.append style={ultra thick},
                      grid=major,
                      xlabel={number of elements $\#\mathcal{T}$},]
              \nextgroupplot[title={Estimator contributions},ymode=log,xmode=log,
           legend entries={\tiny{$\rho_{c}$},\tiny{$\rho_{p}$},\tiny{$\rho_{r}$},\tiny{$\rho$}},
                      legend pos=south west]
                \addplot table [x=dofs,y=estLambdaU] {data/Estimators_ex2_unif.dat};
                \addplot table [x=dofs,y=estViol] {data/Estimators_ex2_unif.dat};
                \addplot table [x=dofs,y=estGradVsigma] {data/Estimators_ex2_unif.dat};
                \addplot table [x=dofs,y=estTot] {data/Estimators_ex2_unif.dat};
         
               \logLogSlopeTriangle{0.86}{0.2}{0.5}{0.33}{black}{{\small $0.33$}};
                
              \nextgroupplot[title={Estimator contributions},ymode=log,xmode=log,
           legend entries={\tiny{$\rho_{c}$},\tiny{$\rho_{p}$},\tiny{$\rho_{r}$},\tiny{$\rho$}},
                      legend pos=south west]
                \addplot table [x=dofs,y=estLambdaU] {data/Estimators_ex2_adap.dat};
                \addplot table [x=dofs,y=estViol] {data/Estimators_ex2_adap.dat};
                \addplot table [x=dofs,y=estGradVsigma] {data/Estimators_ex2_adap.dat};
                \addplot table [x=dofs,y=estTot] {data/Estimators_ex2_adap.dat};
                
                \logLogSlopeTriangle{0.9}{0.2}{0.55}{0.5}{black}{{\small $0.5$}};
                 \logLogSlopeTriangleBelow{0.8}{0.2}{0.1}{0.5}{black}{{\small $0.5$}}
    \end{groupplot}
\end{tikzpicture}
  \caption{Error estimator $\rho$ and its individual contributions with uniform refinement (left) and adaptive refinement (right) for the problem from Section \ref{sec:membrane:num_ex_unk_L}.}
\label{fig:ey_1}
\end{figure}


\begin{figure}
  \begin{tikzpicture}
  \begin{groupplot}[
      group style={group size=2 by 3, horizontal sep=1cm, vertical sep=1cm},
      width=0.5\textwidth,
      ylabel={$y$},
      xlabel={$x$},
    ]
    \nextgroupplot[title={$\#\mathcal{T}=333$},axis equal,hide axis]
      \addplot[patch,color=white,
        faceted color = black, line width = 0.15pt,
      patch table ={data/elements/ele333.dat}] file{data/coordinates/coo333.dat};
    \nextgroupplot[zmin=-0.05,zmax=0.3,colorbar]
      \addplot3[patch] table{data/solution333.dat};
    \nextgroupplot[title={$\#\mathcal{T}=616$},axis equal,hide axis]
      \addplot[patch,color=white,
        faceted color = black, line width = 0.15pt,
      patch table ={data/elements/ele616.dat}] file{data/coordinates/coo616.dat};
    \nextgroupplot[zmin=-0.05,zmax=0.3,colorbar]
      \addplot3[patch] table{data/solution616.dat};
    \nextgroupplot[title={$\#\mathcal{T}=1109$},axis equal,hide axis]
      \addplot[patch,color=white,
        faceted color = black, line width = 0.15pt,
      patch table ={data/elements/ele1109.dat}] file{data/coordinates/coo1109.dat};
    \nextgroupplot[zmin=-0.05,zmax=0.3,colorbar]
      \addplot3[patch] table{data/solution1109.dat};
  \end{groupplot}
\end{tikzpicture}
  \caption{Meshes and solution component $u_h$ for the problem from Section \ref{sec:membrane:num_ex_unk_L}.}
\label{fig:ey_2}
\end{figure}


\subsection{Examples for the plate obstacle problem}\label{sec:num_ex:plate}

\subsubsection{Example with known solution}\label{sec:num_ex:plate1}
Let $\Omega = (-1,1)^2$ and set for $(x,y)\in\Omega$, 
\begin{align*}
  u(x,y) &= \begin{cases}
    \big(1-(x^2+y^2)\big)^4 & \text{if }x^2+y^2<1, \\
    0 & \text{if }x^2+y^2\geq 1,
  \end{cases}
  \\
  g(x,y) &= \begin{cases}
    \big(1-(x^2+y^2)\big)^4 & \text{if } x^2+y^2<\frac1{16}, \\
    -\frac{100697}{36864}(x^2+y^2) -\frac{20803}{73728}\sqrt{x^2+y^2} + \frac{74741}{73728} & \text{if }x^2+y^2\geq \frac{1}{16},
  \end{cases}
  \\
  f(x,y) &= \begin{cases}
    \Delta^2 u-100 & \text{if } x^2+y^2<\frac1{16}, \\
    \Delta^2 u & \text{if } x^2+y^2 \geq \frac1{16}.
  \end{cases}
\end{align*}
Note that $u\in H^4(\Omega)\cap H_0^2(\Omega)$, $f\in L^2(\Omega)$ and $g\in H^2(\Omega)\cap C^1(\overline\Omega)$ is piecewise smooth.
We solve the discrete plate obstacle problem on a sequence of uniformly refined meshes.

\begin{figure}
  \begin{center}
    \begin{tikzpicture}
\begin{loglogaxis}[
width=0.49\textwidth,
cycle list/Dark2-6,
cycle multiindex* list={  mark list*\nextlist
                          Dark2-6\nextlist},
xlabel={$\dim(\mathbb{X}(\mathcal{T})\times P^1(\mathcal{T})\times P^1(\mathcal{T}))$},
grid=major,
legend entries={{\tiny $\|u-u_h\|_\Omega$},{\tiny $\|\bM-\bM_h\|_\Omega$},{\tiny $\sqrt{\xi_r^2+\mathrm{osc}^2}$},{\tiny $\sqrt{\xi_c^2+\xi_p^2}$},{\tiny $\lambda_h(\Omega)^{1/2}\xi_\infty^{1/2}$}},
legend pos=north east,
every axis plot/.append style={ultra thick},
]
\addplot table [x=dofMIXED,y=errU] {data/ExamplePlateSmooth.dat};
\addplot table [x=dofMIXED,y=errM] {data/ExamplePlateSmooth.dat};
\addplot table [x=dofMIXED,y=estMalt] {data/ExamplePlateSmooth.dat};
\addplot table [x=dofMIXED,y=estG] {data/ExamplePlateSmooth.dat};
\addplot table [x=dofMIXED,y=estInf] {data/ExamplePlateSmooth.dat};

\logLogSlopeTriangle{0.9}{0.2}{0.42}{1}{black}{{\small $1$}};
\logLogSlopeTriangleBelow{0.8}{0.2}{0.1}{1}{black}{{\small $1$}}

\end{loglogaxis}
\end{tikzpicture}
  \end{center}
  \caption{Error and estimators for the problem from Section~\ref{sec:num_ex:plate1}.}
  \label{fig:plate:ex1}
\end{figure}
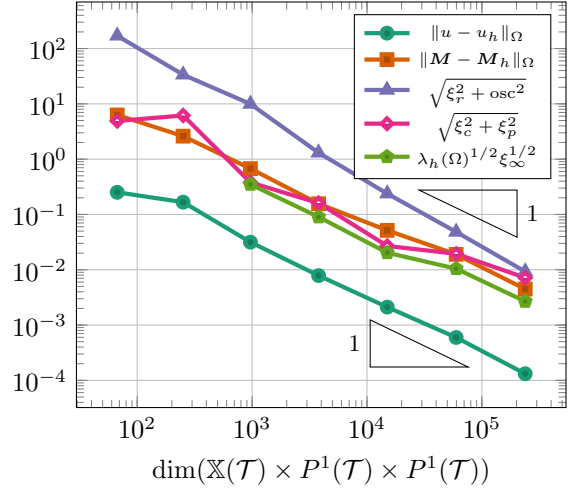

Figure~\ref{fig:plate:ex1} shows the $L^2(\Omega)$ errors of $u-u_h$ and $\bM-\bM_h$ and compares them to the different estimator contributions. We observe optimal experimental convergence rates of the error as well as the estimators, i.e., $\mathcal{O}( (\#\mathcal{T})^{-1})$ behavior.

\subsubsection{Example~\cite[Example~7.3]{MR3595879}}\label{sec:num_ex:plate2}
We consider the setup of~\cite[Example~7.3]{MR3595879}. Set $f(x,y)=0$, and 
\begin{align*}
  g(x,y) = 1-\left( \frac{(x+\tfrac14)^2}{0.2^2} + \frac{y^2}{0.35^2} \right), \quad (x,y)\in \Omega:=(-\tfrac12,\tfrac12)^2\setminus[0,\tfrac12]^2.
\end{align*}
Note that the obstacle function $g$ is smooth. Nevertheless, we expect that the exact solution to have a geometric singularity due to non-convexity of $\Omega$. 
This is confirmed by our numerical examples on a sequence of uniformly refined meshes where a reduced convergence order for the estimator is observed, see Figure~\ref{fig:plate:ex2}.
The use of the adaptive algorithm steered by the estimator $\xi$ recovers rates $\mathcal{O}((\#\cT)^{-1})$.

\begin{figure}
  \begin{center}
    \begin{tikzpicture}
\begin{loglogaxis}[
width=0.49\textwidth,
cycle list/Dark2-6,
cycle multiindex* list={  mark list*\nextlist
                          Dark2-6\nextlist},
xlabel={$\dim(\mathbb{X}(\mathcal{T})\times P^1(\mathcal{T})\times P^1(\mathcal{T}))$},
grid=major,
legend entries={{\tiny $\sqrt{\xi_r^2+\mathrm{osc}^2}$ (u)},{\tiny $\sqrt{\xi_c^2+\xi_p^2}$ (u)},{\tiny $\lambda_h(\Omega)^{1/2}\xi_\infty^{1/2}$ (u)},{\tiny $\sqrt{\xi_r^2+\mathrm{osc}^2}$ (a)},{\tiny $\sqrt{\xi_c^2+\xi_p^2}$ (a)},{\tiny $\lambda_h(\Omega)^{1/2}\xi_\infty^{1/2}$ (a)}},
legend pos=outer north east,
every axis plot/.append style={ultra thick},
]
\addplot table [x=dofMIXED,y=estMalt] {data/ExamplePlate2_unif.dat};
\addplot table [x=dofMIXED,y=estG] {data/ExamplePlate2_unif.dat};
\addplot table [x=dofMIXED,y=estInf] {data/ExamplePlate2_unif.dat};
\addplot table [x=dofMIXED,y=estMalt] {data/ExamplePlate2_adap.dat};
\addplot table [x=dofMIXED,y=estG] {data/ExamplePlate2_adap.dat};
\addplot table [x=dofMIXED,y=estInf] {data/ExamplePlate2_adap.dat};

\logLogSlopeTriangle{0.87}{0.2}{0.55}{0.27}{black}{{\small $0.27$}};
\logLogSlopeTriangleBelow{0.8}{0.2}{0.1}{1}{black}{{\small $1$}}

\end{loglogaxis}
\end{tikzpicture}
  \end{center}
  \caption{Estimators on a sequence of uniformly (u) and adaptively (a) refined meshes for the problem from Section~\ref{sec:num_ex:plate2}.}
  \label{fig:plate:ex2}
\end{figure}

Figure~\ref{fig:plate:ex2meshes} visualizes three meshes generated by the adaptive algorithm. Strong refinements towards the reentrant corner of the domain and in the vicinity of the contact region can be observed. 
This is in agreement with~\cite[Example~7.3]{MR3595879} where similar observations have been made for the $C^0$ interior penalty method studied there.
\begin{figure}
  \begin{center}
%
\begin{tikzpicture}
  \begin{groupplot}[group style={group size= 3 by 1},width=0.33\textwidth,axis equal,hide axis]
    \nextgroupplot[title={$\#\mathcal{T}=857$}]
    \addplot[patch,color=white,
      faceted color = black, line width = 0.15pt,
    patch table ={data/ExamplePlate2_ele_00857.dat}] file{data/ExamplePlate2_coo_00857.dat};
    \nextgroupplot[title={$\#\mathcal{T}=1122$}]
    \addplot[patch,color=white,
      faceted color = black, line width = 0.15pt,
    patch table ={data/ExamplePlate2_ele_01122.dat}] file{data/ExamplePlate2_coo_01122.dat};
    \nextgroupplot[title={$\#\mathcal{T}=1714$}]
    \addplot[patch,color=white,
      faceted color = black, line width = 0.15pt,
    patch table ={data/ExamplePlate2_ele_01714.dat}] file{data/ExamplePlate2_coo_01714.dat};
  \end{groupplot}
\end{tikzpicture}
  \end{center}
  \caption{Three consecutive meshes generated by the adaptive algorithm for the problem from Section~\ref{sec:num_ex:plate2}.}
  \label{fig:plate:ex2meshes}
\end{figure}
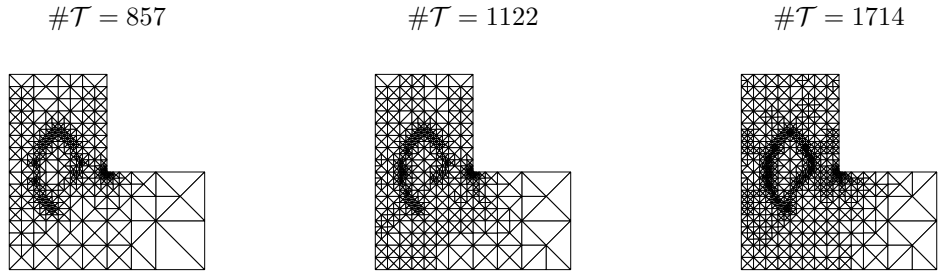

\subsubsection{Example with non-smooth obstacle on L-shaped domain}\label{sec:num_ex:plate3}
For this last example we consider the L-shaped domain $\Omega = (-1,1)^2\setminus[-1,0]^2$, $f(x,y) = 0$ and 
\begin{align*}
  g(x,y) = \frac14 - \big((x-\tfrac12)^2+(y-\tfrac12)^2\big)^{3/4}, \quad (x,y)\in \Omega. 
\end{align*}
Note that $g\in H^2(\Omega)\cap C^1(\overline\Omega)$ but $g\notin H^k(\Omega)$ for $k\geq 3$.
Due to non-convexity of the domain and the regularity of $g$ one expects reduced regularity of the exact solution $u$ to the plate obstacle problem. 
Again, uniformly refined meshes lead to reduced convergence rates whereas the adaptive algorithm seems experimentally recover optimal rates, see Figure~\ref{fig:plate:ex3}. 
Furthermore, Figure~\ref{fig:plate:ex3sol} shows two meshes together with corresponding solution component $u_h$. We observe that the meshes are strongly refined towards the origin $(0,0)$ of the domain as well as towards $(\tfrac12,\tfrac12)$ where the obstacle touches the plate and has reduced regularity.

\begin{figure}
  \begin{center}
    \begin{tikzpicture}
\begin{loglogaxis}[
width=0.49\textwidth,
cycle list/Dark2-6,
cycle multiindex* list={  mark list*\nextlist
                          Dark2-6\nextlist},
xlabel={$\dim(\mathbb{X}(\mathcal{T})\times P^1(\mathcal{T})\times P^1(\mathcal{T}))$},
grid=major,
legend entries={{\tiny $\sqrt{\xi_r^2+\mathrm{osc}^2}$ (u)},{\tiny $\sqrt{\xi_c^2+\xi_p^2}$ (u)},{\tiny $\lambda_h(\Omega)^{1/2}\xi_\infty^{1/2}$ (u)},{\tiny $\sqrt{\xi_r^2+\mathrm{osc}^2}$ (a)},{\tiny $\sqrt{\xi_c^2+\xi_p^2}$ (a)},{\tiny $\lambda_h(\Omega)^{1/2}\xi_\infty^{1/2}$ (a)}},
legend pos=outer north east,
every axis plot/.append style={ultra thick},
]
\addplot table [x=dofMIXED,y=estMalt] {data/ExamplePlate3_unif.dat};
\addplot table [x=dofMIXED,y=estG] {data/ExamplePlate3_unif.dat};
\addplot table [x=dofMIXED,y=estInf] {data/ExamplePlate3_unif.dat};
\addplot table [x=dofMIXED,y=estMalt] {data/ExamplePlate3_adap.dat};
\addplot table [x=dofMIXED,y=estG] {data/ExamplePlate3_adap.dat};
\addplot table [x=dofMIXED,y=estInf] {data/ExamplePlate3_adap.dat};

\logLogSlopeTriangle{0.89}{0.2}{0.69}{0.27}{black}{{\small $0.27$}};
\logLogSlopeTriangle{0.78}{0.11}{0.3}{1}{black}{{\small $1$}};
\logLogSlopeTriangleBelow{0.78}{0.18}{0.03}{1}{black}{{\small $1$}}

\end{loglogaxis}
\end{tikzpicture}
  \end{center}
  \caption{Estimators on a sequence of uniformly (u) and adaptively (a) refined meshes for the problem from Section~\ref{sec:num_ex:plate3}.}
  \label{fig:plate:ex3}
\end{figure}
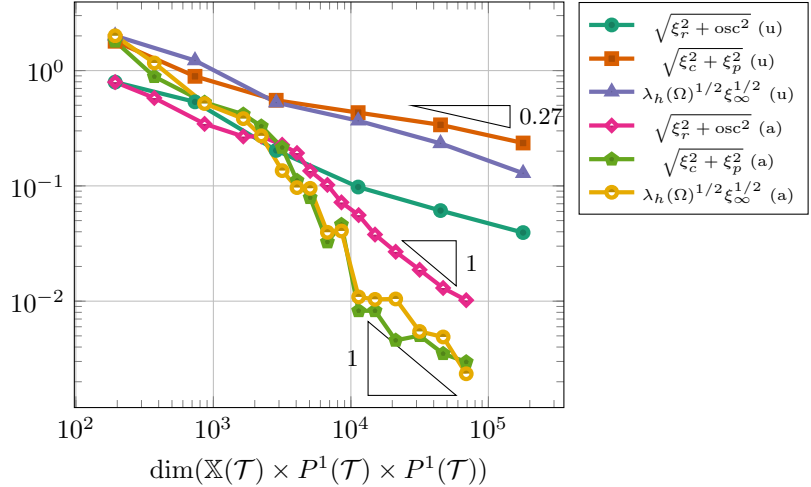

\begin{figure}
  \begin{center}
    \begin{tikzpicture}
  \begin{groupplot}[
      group style={group size=2 by 2, horizontal sep=1cm, vertical sep=1cm},
      width=0.5\textwidth,
      ylabel={$y$},
      xlabel={$x$},
    ]
    \nextgroupplot[title={$\#\mathcal{T}=582$},axis equal,hide axis]
      \addplot[patch,color=white,
        faceted color = black, line width = 0.15pt,
      patch table ={data/ExamplePlate3_ele_00582.dat}] file{data/ExamplePlate3_coo_00582.dat};
    \nextgroupplot[zmin=-0.05,zmax=0.3,colorbar]
      \addplot3[patch] table{data/ExamplePlate3_sol_00582.dat};
    \nextgroupplot[title={$\#\mathcal{T}=778$},axis equal,hide axis]
      \addplot[patch,color=white,
        faceted color = black, line width = 0.15pt,
      patch table ={data/ExamplePlate3_ele_00778.dat}] file{data/ExamplePlate3_coo_00778.dat};
    \nextgroupplot[zmin=-0.05,zmax=0.3,colorbar]
      \addplot3[patch] table{data/ExamplePlate3_sol_00778.dat};
  \end{groupplot}
\end{tikzpicture}
  \end{center}
  \caption{Meshes and solution component $u_h$ for the problem from Section~\ref{sec:num_ex:plate3}.}
  \label{fig:plate:ex3sol}
\end{figure}

\bibliographystyle{siam}
\bibliography{biblio}
\end{document}